\documentclass[11pt,reqno]{amsart}
\setlength{\textheight}{23cm}
\setlength{\textwidth}{16.5cm}
\setlength{\topmargin}{-0.8cm}
\setlength{\parskip}{0.3\baselineskip}
\hoffset=-1.9cm

\usepackage{amsmath}
\usepackage{hyperref} 
\usepackage{amsfonts}
\usepackage{amssymb}
\usepackage{amsthm}
\usepackage[arrow, matrix, curve]{xy} 
\usepackage{hyperref}
\usepackage{svn-multi}
\usepackage{amssymb,amscd,url,tikz,stmaryrd,mathtools,fixltx2e}
\usepackage{color}
\usepackage{tikz, pgfplots}
\usepackage{tikz-cd} 
\usepackage{comment}

\usepackage{paralist,csquotes,stackrel}
 
\usepackage[mathscr]{eucal}

\numberwithin{equation}{section}

\newtheorem{proposition}{\textbf{Proposition}}
\newtheorem{prop}[proposition]{\textbf{Proposition}}
\newtheorem{lemma}[proposition]{\textbf{Lemma}}
\newtheorem{lem}[proposition]{\textbf{Lemma}}
\newtheorem{corollary}[proposition]{\textbf{Corollary}}
\newtheorem{cor}[proposition]{\textbf{Corollary}}
\newtheorem{theorem}[proposition]{\textbf{Theorem}}
\newtheorem{thm}[proposition]{\textbf{Theorem}}

\theoremstyle{definition}
\newtheorem{definition}[proposition]{\textbf{Definition}}

\newtheorem{remark}[proposition]{\textbf{Remark}}
\newtheorem{rem}[proposition]{\textbf{Remark}}

\newcommand{\Lie}[1]{\operatorname{\textsl{#1}}}

\newcommand{\lie}[1]{\operatorname{\mathfrak{#1}}}

\newcommand{\sln}{\lie{sl}}

\newcommand{\SO}{\Lie{SO}}

\newcommand{\SL}{{\rm SL}}

\newcommand{\GL}{{\rm GL}}

\newcommand{\SU}{{\rm SU}}

\newcommand\C{{\mathbb C}}

\newcommand{\R}{{\mathbb R}}

\newcommand{\End}{{\rm End}}

\newcommand{\Hom}{{\rm Hom}}

\newcommand{\Z}{\mathbb{Z}}

\numberwithin{proposition}{section}

\begin{document}

\title[Energy of sections  of Deligne--Hitchin twistor space]{Energy of sections  of the Deligne--Hitchin twistor space}

\author[F. Beck]{Florian Beck}
\email[(Beck)]{florian.beck@uni-hamburg.de}
\author[S. Heller]{Sebastian Heller}
\email[(Heller)]{seb.heller@gmail.com}
\author[M. R\"oser]{Markus R\"oser}
\email[(R\"oser)]{markus.roeser@uni-hamburg.de}
\address[Beck, R\"oser]{Fachbereich Mathematik,
Universit\"at Hamburg, 20146 Hamburg, Germany} 
\address[Heller]{Insitut f\"ur Differentialgeometrie, Leibniz Universit\"at Hannover, Welfengarten 1, 30167 Hannover}
\subjclass[2010]{53C26, 53C28, 53C43, 14H60, 14H70}

\keywords{Self-duality equation, Deligne--Hitchin twistor space, harmonic maps, Willmore functional}

\date{\today}

\begin{abstract}
We study a natural functional on the space of holomorphic sections of the Deligne--Hitchin moduli space of a
compact Riemann surface, generalizing the energy of equivariant harmonic maps corresponding to twistor lines.
We show that the energy is the residue of the pull-back along the section of a natural meromorphic connection on the hyperholomorphic line bundle recently constructed by Hitchin. As a byproduct, we show the existence of a hyper-K\"ahler potentials for 
new components of real holomorphic sections of twistor spaces of hyper-K\"ahler manifolds with rotating $S^1$-action.
Additionally, we prove that for a certain class of real holomorphic sections of the Deligne--Hitchin moduli space, the energy functional is basically given by the Willmore energy of corresponding equivariant conformal map to the 3-sphere. As an application
we use the functional to distinguish new components of real holomorphic sections of the Deligne--Hitchin moduli space from the space of twistor lines.
\end{abstract}
\maketitle

\section*{Introduction}
The Deligne--Hitchin moduli space $\mathcal M_{DH}(\Sigma, G_\C)$ \cite{Si-Hodge} for a compact Lie group $G$ with complexification $G_\C$, is a complex analytic reincarnation of the twistor space of the hyper-K\"ahler moduli space $\mathcal M_{SD}(\Sigma,G)$ of solutions of Hitchin's self-duality equations on a principal $G$-bundle over a compact Riemann surface $\Sigma$ \cite{Hitchin_1987}. It is defined by gluing the Hodge moduli spaces of $\lambda$-connections on $\Sigma$ and $\overline{\Sigma}$ via the monodromy representation, and it admits a holomorphic fibration over the projective line. The fibers are the moduli spaces of $G_\C$-Higgs bundles, flat $G_\C$-connections or $G_\C$-Higgs bundles over $\overline \Sigma$. Holomorphic sections of the Deligne--Hitchin moduli space are interesting for various reasons: $\mathcal M_{DH}(\Sigma, G_\C)$ carries an anti-holomorphic involution $\tau$ covering the antipodal map $\lambda\mapsto -\bar\lambda^{-1}$. By the twistor construction for hyper-K\"ahler manifolds \cite{HKLR}, the hyper-K\"ahler moduli space $\mathcal M_{SD}(\Sigma,G)$ can be identified with a certain component of the space of $\tau$-real holomorphic sections of the fibration $\mathcal{M}_{DH}(\Sigma,G_\C)\to\C P^1$. These sections are called twistor lines. On the other hand, a solution of the self-duality equations corresponds to an equivariant harmonic map from the universal cover $\tilde \Sigma$ into the symmetric space $G_\C/G$, which can be reconstructed from the associated twistor line by loop group factorization methods \cite{Si-construct,BHR}.
Apart from the twistor lines, holomorphic sections satisfying other types of reality conditions arise from equivariant harmonic maps of $\tilde\Sigma$ into different (pseudo-)Riemannian
symmetric spaces related to the the group $G_\C$ and its real forms. 

This paper is motivated by the work of some of the authors about the question of Simpson, whether all $\tau$-real holomorphic sections are in fact twistor lines \cite{Si-Hodge}. The answer turns out to be \emph{no} \cite{HH2,BHR}, and leads to the problem of how to differentiate between the components of the space of $\tau$-real holomorphic sections. 

The most fundamental quantity associated to a harmonic map is its energy and the starting point of this paper is the
simple observation that the
energy of a harmonic map (defined on a compact Riemann surface) can be computed via its associated holomorphic section
of the Deligne--Hitchin moduli space (see Theorem \ref{harmonicenergy}). This computation leads us to a well-defined energy functional on the space of holomorphic sections (see
Proposition \ref{defene}). The detailed investigation of this functional is the main objective of our work. 
It should be mentioned that the functional is defined in terms of the complex analytic structure of the Deligne--Hitchin moduli space and its definition does not involve the hyper-K\"ahler metric on $\mathcal M_{SD}(\Sigma,G)$, i.e. the twistor lines.

We will mostly be concerned with the case $G = \SU(2)$, so that $G_\C = \SL(2,\C)$. 
Twistor lines then correspond to equivariant harmonic maps of $\tilde\Sigma$ into the hyperbolic space $H^3 = \SL(2,\C)/\SU(2)$. This is the space of positive definite hermitian matrices, hence these harmonic maps are called harmonic metrics. One can also study equivariant harmonic maps from $\tilde\Sigma$ into the $3$-sphere $S^3 = \SU(2)$ (the compact dual of $\SL(2,\C)/\SU(2)$), into the anti de Sitter space $\mathrm{AdS}^3 = \SL(2,\R)$ or into the de Sitter space $\mathrm{dS}^3 = \SL(2,\C)/\SL(2,\R)$ via holomorphic sections of $\mathcal M_{DH}(\Sigma,\SL(2,\C))\to\mathbb CP^1$. To a conformal equivariant harmonic map one may, under certain circumstances, associate another harmonic map with in general different target space.
This process, which we call \emph{twisting}, played a central role in the construction of counterexamples to Simpson's question in \cite{BHR} and we will give a more systematic treatment in this article. 
We study how the energy functional interacts with the two real structures on $\mathcal{M}_{DH}(\Sigma, G_\C)$. We will see that it takes real values on real holomorphic sections and is normalised in such a way that it takes non-positive values on twistor lines, while it is non-negative on holomorphic sections corresponding to equivariant harmonic maps into $G$. In the rank $2$ case we then examine its behavior under twisting (see Proposition \ref{twisttheenergy}). The explicit relation between the energy of a section and its twist allows us to give an alternative proof that the $\tau$-real sections constructed in \cite{BHR} are not twistor lines by checking that these have positive energy.  

Although the definition of the functional is motivated by the theory of harmonic maps and does not involve the hyper-K\"ahler structure of $\mathcal M_{SD}(\Sigma,G)$, it can be given a natural interpretation in terms of the hyper-K\"ahler geometry of the moduli space  $\mathcal M_{SD}(\Sigma,G)$. The natural isometric circle action on $\mathcal M_{SD}(\Sigma,G)$ plays a central role. It preserves one of the complex structures of and rotates the other two complex structures. We show that an analogous functional exists on the space of holomorphic sections of the twistor space of any hyper-K\"ahler manifold with an isometric circle action of this type. Building on work by Haydys \cite{Haydys}, Hitchin \cite{Hitchin-hkqk} has shown that on the twistor space of such a hyper-K\"ahler manifold, one has a natural holomorphic line bundle with meromorphic connection. The pull-back of the meromorphic connection along a holomorphic section of $\mathcal M_{DH}(\Sigma,G_\C)\to\mathbb CP^1$ has simple poles at $\lambda=0,\infty$ only, and it turns out that the residue at $\lambda=0$ coincides with the energy (Corollary \ref{cor:ResEnergy}). As a byproduct, we show that the residue evaluation along sections is always a complexification
of the moment map of the $S^1$-action (Theorem \ref{thm:res}).  Moreover, it automatically serves as a K\"ahler potential on all hyper-K\"ahler components of real holomorphic sections of the twistor space. Recently it has been shown \cite{Hel} that there indeed exist such hyper-K\"ahler components of the space real holomorphic sections of $\mathcal M_{DH}(\Sigma,\SL(2,\C))$. The energy functional thus gives a K\"ahler potential on these new components and we hope to extract from it more information about the geometry of these components in the future.

The third main objective of the paper is the geometric interpretation of the energy for a class of
 $\tau$-real holomorphic sections which are not twistor lines \cite{HH2}.
 Recall that in the case of $G=\SU(2)$, twistor lines correspond to equivariant harmonic maps to hyperbolic 3-space $H^3.$
  A holomorphic section of the type constructed in \cite{HH2} is instead obtained from a M\"obius equivariant Willmore surface $\tilde\Sigma\to S^3$. By decomposing the 3-sphere $S^3 = H^3\cup S^2\cup H^3$
 into two hyperbolic balls separated by the boundary 2-sphere at infinity,
  one can show that such a holomorphic section defines a solution of the self-duality equations on an open dense subset of the Riemann surface $\Sigma$. The solution blows up in a well-behaved way near certain curves on the surface. The corresponding equivariant harmonic map into hyperbolic $3$-space intersects $S^2$, the boundary at infinity, along these curves and continues as a harmonic map on the other side.  We prove that the energy of such a section is directly related to the Willmore energy of the surface, a conformally invariant measure of the roundness of an immersed surface. 
This relation allows us to prove our last main result: the sections constructed in \cite{HH2} have positive energy. This gives
a complex analytic way to distinguish the component of twistor lines from this newly discovered component of real holomorphic sections.

The structure of the paper is as follows. In Section \ref{se1} we set up some notation and recall basic notions associated with holomorphic sections of the Deligne--Hitchin moduli space over a compact Riemann surface. In Section \ref{Sec:defene} we define the energy functional and
prove its basic properties. Section \ref{hyperenergy} then contains
 the natural interpretation of the energy functional in terms of the residue of the meromorphic connection on the hyperholomorphic line bundle over $\mathcal M_{DH}(\Sigma,G_\C)$. 
 In Section \ref{Sec:Willener}, we relate the energy of the real holomorphic sections constructed in \cite{HH2} to the Willmore energy of the related M\"obius equivariant Willmore surfaces.
In the final Section \ref{Sec:enes}, we show that the energy functional can be used to distinguish different components of the space of real holomorphic sections. In particular, we prove that the new sections of \cite{HH2} have positive energy.

\section{The Deligne--Hitchin moduli space}\label{se1}
\subsection{$\lambda$-connections and the Deligne--Hitchin moduli space}\label{ss:dh}
Let $(M^{4k};g,I_1,I_2,I_3)$ be a hyper-K\"ahler manifold. Recall that this means that $g$ is a Riemannian metric and $I_1,I_2,I_3$ are orthogonal complex structures satisfying the quaternionic relations $I_1I_2 = I_3 = -I_2I_1$ such that the two-forms $\omega_j = g(I_j-,-)$, $j=1,2,3$ are closed. It can be shown that $\omega_\C = \omega_2+i\omega_3$ is a holomorphic symplectic form with respect to the complex structure $I_1$.  

Associated with a hyper-K\"ahler manifold we have the twistor space $Z = Z(M)$ which is a complex manifold of complex dimension $2k+1$ on which the hyper-K\"ahler structure is encoded in the following complex-geometric data \cite{HKLR}: 
\begin{itemize}
\item a holomorphic projection $\pi_Z:Z\to\mathbb CP^1$,
\item a holomorphic section $\omega\in H^0(\Lambda^2T_F^*(2))$ inducing a holomorphic symplectic form on each fibre $\pi^{-1}(\lambda)$ (here $T_F = \ker d\pi_Z$ is the tangent bundle along the fibers),
\item an anti-holomorphic involution $\tau_Z: Z\to Z$ covering the antipodal map $\mathbb CP^1\to\mathbb CP^1$ and such that $\overline{\tau_Z^*\omega} = \omega$,
\item a family (parametrized by $M$) of $\tau_Z$-real holomorphic sections with normal bundle isomorphic to $\mathcal O(1)^{2k}$ , the \emph{twistor lines}.  
\end{itemize}
 
We now briefly recall the construction of the Deligne--Hitchin moduli space, which may be interpreted as the twistor space of the hyper-K\"ahler moduli space of solutions to the self-duality equations on a Riemann surface $\Sigma$. For details we refer to \cite{Si-Hodge}, see also \cite{BHR} for a more differential geometric account. The discussion of this subsection works for complex reductive Lie groups $G$ as structure groups. 
Since we fully work out our concepts, e.g. twisting (Section \ref{Subsec:Twisting}), for $\SL(2,\C)$, we choose $G=\SL(n,\C)$ in this subsection for concreteness. 

Let $\Sigma$ be a compact Riemann surface and denote by $E\to\Sigma$ the trivial smooth rank $n$ vector bundle. We endow $E$ with an $\SL(n,\C)$-structure, i.e. a trivialisation $\det E \cong\mathcal O_\Sigma$, which in the case of rank $2$ is a complex symplectic form.  
We denote by $\sln(E)$ the subbundle of $\mathrm{End}(E)$ given by the endomorphisms of trace zero. 

Denote by $\mathcal C(E)$ the space of holomorphic structures $\bar\partial$ on $E$ that induce the trivial holomorphic structure on $\det E\cong  \mathcal O$. It is an affine space for $\Omega^{0,1}(\Sigma,\sln(E))$. 
To formulate the self-duality equations, we must reduce the structure group to the maximal compact subgroup $\SU(n)$, i.e. we choose a hermitian metric $h$ on $E$. 
Then the self-duality equations are given by 
\begin{equation}\label{eq:sd}
\begin{aligned}
F^{\nabla_h}+[\Phi\wedge\Phi^{*_h}]&=0, \\
 \bar{\partial} \Phi&=0
 \end{aligned}
\end{equation}
for a holomorphic structure $\bar{\partial} \in\mathcal C(E)$ and $\Phi\in \Omega^{1,0}(\Sigma, \sln(E))$.
As usual, $\nabla_h$ is the Chern connection with respect to $\bar{\partial}$ and $h$.
Moreover, $*_h$ is the adjoint with respect to $h$, which we will sometimes just denote by $*$ if confusion is unlikely. 
We denote by 
\begin{equation*}
\mathcal{H}\subset \mathcal{C}(E)\times \Omega^{1,0}(\Sigma, \sln(E))
\end{equation*}
the space of solutions to \eqref{eq:sd}. 
Then the moduli space of such solutions is given by 
\[
\mathcal{M}_{SD}(\Sigma, \SU(n))=\mathcal{H}_{}/\mathcal{G}
\] 
with the special unitary gauge group $\mathcal{G}=\Gamma(\SU(E)) = \{g\in \Gamma(\mathrm{End}(E)\colon u^*u = \mathrm{id}, \det u  = 1\}$ acting by $(\bar\partial,\Phi).g = (g^{-1}\circ\bar\partial\circ g, g^{-1}\Phi g)$. The smooth locus of $\mathcal M_{SD}(\Sigma,\SU(n))$ is given by $\mathcal{M}_{SD}^{irr}(\Sigma,SU(n))=\mathcal H^{irr}/\mathcal G$, where $\mathcal H^{irr}$ denotes the set of irreducible solutions, i.e. those for which $(\overline{\partial},\Phi).g = (\overline{\partial},\Phi)$ implies that $g\in\mathcal G$ is a constant multiple of $\mathrm{id}_E$. 

The space $\mathcal{C}(E)\times \Omega^{1,0}(\Sigma, \mathfrak{sl}(E))$ is an affine space for $\Omega^{0,1}(\Sigma, \mathfrak{sl}(E))\oplus \Omega^{1,0}(\Sigma, \mathfrak{sl}(E))$. It carries a flat hyper-K\"ahler structure given by the three complex structures
\begin{equation}
\begin{aligned}
I_1(\gamma,\beta)&=(i\gamma,i\beta), \\
I_2(\gamma,\beta)&=(-\beta^*,\gamma^*),  \\
I_3(\gamma,\beta)&=(-i\beta^*, i\gamma^*),
\end{aligned}
\end{equation}
for $(\gamma,\beta)\in \Omega^{0,1}(\Sigma, \mathfrak{sl}(E))\oplus \Omega^{1,0}(\Sigma, \mathfrak{sl}(E))$ and metric 
\begin{equation}
 \|(\gamma,\beta)\|^2=2i \int_\Sigma \mathrm{tr}(\gamma^*\wedge \gamma+\beta\wedge \beta^*). 
\end{equation}
The holomorphic symplectic $2$-form $\omega_{\C} = \omega_2+i\omega_3$ (with respect to $I_1$) is 
\begin{equation}
\omega_{\C}((\gamma_1, \beta_1),(\gamma_2,\beta_2))= 2i \int_\Sigma \mathrm{tr}(\beta_2\wedge \gamma_1 - \beta_1 \wedge \gamma_2).
\end{equation}

The action of $\mathcal G$ preserves this hyper-K\"ahler structure and (formally) the self-duality equations are the vanishing condition for the associated hyper-K\"ahler moment map. Therefore, by the hyper-K\"ahler quotient construction, $\mathcal M_{SD}^{irr}(\Sigma,\SU(n))$ inherits a  hyper-K\"ahler structure \cite{Hitchin_1987, Fujiki}.

To give a complex analytic description of the twistor space of $\mathcal M_{SD}^{irr}(\Sigma,\SU(n))$ Deligne introduced the concept of a $\lambda$-connection \cite{Si-Hodge}:

\begin{definition}
Let $\lambda\in\C$.
A {\it holomorphic $\lambda$-connection} on $E$ is a pair $(\overline\partial,D)$ such that:
\begin{itemize}
\item $\overline\partial\in\mathcal C(E)$,
\item $D: \Gamma(E)\to \Omega^{1,0}(E)$ is a $C^\infty$ differential operator satisfying
\[
D(fs) \,=\, \lambda\partial f\otimes s + fDs,
\]
such that the induced differential operator on $\det E$ coincides with $\lambda \partial_\Sigma$.
\item The differential operator $D$ is holomorphic in the sense that 
\begin{equation}\label{dd-dd0}
D\overline{\partial}+\overline{\partial}D =0.
\end{equation}
\end{itemize}
The group of complex gauge transformations $\mathcal G_\C = \Gamma(\SL(E)) = \{g\in\Gamma(\End(E))\colon \det g \equiv 1\}$ acts on the space of $\lambda$-connections by 
\[
(\overline\partial,D).g = (g^{-1}\circ \overline{\partial}\circ g, g^{-1}\circ D \circ g).
\]

A holomorphic $\lambda$-connection $(\overline{\partial},D)$ on $E$ is called \emph{stable} (resp. semi-stable) if any $D$-invariant holomorphic subbundle $F\subset (E,\overline{\partial})$ satisfies $\deg F <0$ (resp. $\deg F\leq 0$). 
We call a holomorphic $\lambda$-connection \emph{polystable} if it is isomorphic to a direct sum of stable $\lambda$-connections whose associated holomorphic bundles have degree zero. 
\end{definition}

\begin{remark}\label{Rem: irred_lambda_conn}
The concept of holomorphic $\lambda$-connections gives a way of interpolating between flat $\SL(n,\C)$-connections and Higgs bundles.
\begin{enumerate}[(i)]
\item If $\lambda = 0$, then  $D$ is $C^\infty$-linear and holomorphic, hence defines a holomorphic section $\Phi\in H^0(\sln(E)\otimes K)$. Hence a $0$-connection is the same as an $\SL(n,\C)$-\emph{Higgs bundle}. The Higgs bundle is stable (resp. semi-stable, resp. polystable) in the sense of \cite{Hitchin_1987} if and only if the $0$-connection is stable (resp. semi-stable, resp. polystable) in the sense of the above definition. 
\item If $\lambda\neq 0$ and $(\overline{\partial}, D)$ is a holomorphic $\lambda$-connection, then the condition \eqref{dd-dd0} implies that we obtain a flat $\SL(n,\C)$-connection $\nabla$ via 
\[
\nabla = \overline{\partial} + \lambda^{-1}D.
\]
Stability in this case means that there exist no $\nabla$-invariant subbundles. A polystable $\lambda$-connection corresponds to a completely reducible flat connection, i.e. a direct sum of irreducible flat connections.
\item The action of the group of gauge transformations  specialises to the usual action on the space of Higgs bundles and flat $\SL(n,\C)$-connections, respectively.
\item A holomorphic $\lambda$-connection $(\overline{\partial},D)$ on $E$ is called {\it irreducible} if $(\overline\partial,D).g = (\overline\partial,D)$ implies that $g\in\mathcal G_\C$ is a constant multiple of the identity endomorphism $\mathrm{id}_E$, i.e. $g$ is a constant map to the center of $\SL(n,\C))$.
Since we work with vector bundles, irreducible $\lambda$-connections are equivalent to stable $\lambda$-connections.
In particular, irreducible $0$-connections are stable Higgs bundles. 
\end{enumerate}
\end{remark}

\begin{definition}
Let $\Sigma$ be a compact Riemann surface. The \emph{Hodge moduli space $\mathcal M_{Hod}(\Sigma,\SL(n,\C))$} is defined as
\[
\mathcal M_{Hod}(\Sigma,\SL(n,\C)) = \{(\overline{\partial},D,\lambda)\colon \lambda\in\C, \text{$(\overline{\partial},D)$ polystable hol. $\lambda$-connection}\}/\mathcal G_\C.
\]
\end{definition}
\begin{remark}
The Hodge moduli space $\mathcal{M}_{Hod}(\Sigma, \SL(n,C))$ is a complex space. 
Its smooth locus coincides with the locus $\mathcal{M}_{Hod}^{s}(\Sigma,\SL(n,\C))$ of stable $\lambda$-connections. 

Note that we have a holomorphic projection 
\[
p\colon\mathcal M_{Hod}(\Sigma,\SL(,\C))\to\C, \qquad (\overline{\partial}, D,\lambda)\mapsto \lambda.
\]
The map $(\overline{\partial},D,\lambda)\mapsto (\overline{\partial} + \lambda^{-1}D,\lambda)$ induces a biholomorphism
\[
p^{-1}(\C^*) \cong \mathcal M_{dR}(\Sigma, \SL(n,\C))\times\C^*,
\]
where $\mathcal M_{dR}(\Sigma, \SL(n,\C))$ is the moduli space of completely reducible flat $\SL(n,\C)$-connections on $\Sigma$. 
Via the Riemann-Hilbert correspondence, this is biholomorphic to the representation variety $\mathcal M_B(\Sigma, \SL(n,\C)) = \Hom^{red}(\pi_1(\Sigma),\SL(n,\C))\slash\SL(n,\C)$ of isomorphism classes of completely reducible representations of the fundamental group $\pi_1(\Sigma)$ into $\SL(n,\C)$.
\end{remark}

There is a natural isometric circle action on $\mathcal M_{SD}(\Sigma,\SU(n))$ induced by the circle action 
\[
e^{i\alpha} (\bar{\partial}, \Phi)=(\bar{\partial}, e^{i\alpha} \Phi)
\] 
on $\mathcal C(E)\times\Omega^{1,0}(\sln(E))$. It preserves the complex structure $I_1$ and rotates $I_2,I_3$. The action complexifies to a natural $\C^*$-action on $\mathcal M_{Hod}(\Sigma, \SL(n,\C))$ covering the standard $\C^*$-action on $\C$. A given $t\in\C^*$ acts on an element $(\overline\partial, D,\lambda)$ by 
\begin{equation}\label{Eq: C*action}
t.(\overline{\partial}, D, \lambda) = (\overline{\partial}, tD,t\lambda).
\end{equation}
\begin{definition}
Let $\Sigma$ be a compact Riemann surface and denote by $\overline \Sigma$ the conjugate surface. 
The Deligne--Hitchin moduli space $\mathcal M_{DH}(\Sigma, \SL(n,\C))$ is 
\[
\mathcal M_{DH}(\Sigma, \SL(n,\C)) = (\mathcal M_{Hod}(\Sigma, \SL(n,\C)) \, \dot\cup\, \mathcal M_{Hod}(\overline{\Sigma}, \SL(n,\C)))/\sim,
\]
where 
\[
(\overline{\partial},\,D,\,\lambda) \,\sim\, (\lambda^{-1} D, \,
\lambda^{-1}\overline{\partial}, \,\lambda^{-1})
\]
for any $(\overline{\partial},\,D,\,\lambda)\,\in\,
\mathcal M_{Hod}(\Sigma, \SL(n,\C))$ with $\lambda\,\neq\, 0$.
If we glue stable $\lambda$-connections, then we write $\mathcal{M}_{DH}^{s}(\Sigma,\SL(n,\C))$.
\end{definition}

\begin{remark}
The projections from the respective Hodge moduli spaces to $\C$ glue to give a holomorphic projection $\pi: \mathcal{M}_{DH}(\Sigma, \SL(n,\C))\to\mathbb CP^1$. 
The Deligne--Hitchin moduli space is a complex space.
Its smooth locus $\mathcal{M}^{s}_{Hod}(\Sigma,\SL(n,\C))$ coincides with the twistor space of $\mathcal{M}_{SD}^{irr}(\Sigma,\SU(n))$ (\cite[\S 4]{Si-Hodge}). 
\end{remark}
The anti-holomorphic involution $\tau$ can be seen via the Riemann--Hilbert correspondence as follows. On $\mathcal M_B(\Sigma,\SL(n,\C))$ we have the natural anti-holomorphic involution which associates to a representation $R:\pi_1(\Sigma)\to \SL(n,\C)$ its complex conjugate dual representation $\gamma\mapsto \overline{R(\gamma)^{-1}}^t$, i.e. $R$ is composed with the Cartan involution corresponding to the compact real form $\SU(n)$.  Under the Riemann--Hilbert correspondence $\mathcal M_B(\Sigma,\SL(n,\C))\cong \mathcal M_{dR}(\Sigma,\SL(n,\C))$ this induces an anti-holomorphic involution on the space of flat connections and we denote by $\overline{\nabla}^*$ the flat connection associated to $\nabla$ in this way. It can be interpreted as the connection on $\overline{E}^*$ induced by $\nabla$, hence the notation. We arrive at the following description of the anti-holomorphic involution on the Deligne--Hitchin moduli space (see also the discussion in \cite[\S 4]{Si-Hodge} and \cite[\S 1.4]{BHR}).
 
\begin{definition}\label{Def:AntiHolInv}
The Deligne--Hitchin moduli space comes with the following involutions. 
\begin{enumerate}[(i)]
\item We have the involution $N$ given by the action of $(-1)\in \C^*$:
\[
N\colon \mathcal M_{DH}(\Sigma, \SL(n,\C))\to\mathcal M_{DH}(\Sigma, \SL(n,\C)),\quad [(\overline{\partial},D,\lambda)]\mapsto [(\overline{\partial},-D,-\lambda)].
\]
\item We have an anti-holomorphic involution $\tau$, covering the antipodal involution $\lambda\mapsto -\overline{\lambda}^{-1}$.
\[
\tau\colon \mathcal{M}_{DH}(\Sigma, \SL(n,\C))\to\mathcal{M}_{DH}(\Sigma, \SL(n,\C)),\quad [(\overline{\partial},\,  D,\,\lambda)]\mapsto [(\overline{\lambda}^{-1}\overline{D}^*, -\overline{\lambda}^{-1}\overline{\overline\partial}^*, -\overline{\lambda}^{-1})] = [(\overline{\overline\partial}^*, -\overline{D}^*,-\bar\lambda)].
\]

\item From $N$ and $\tau$ we get a further anti-holomorphic involution $\rho = \tau\circ N = N\circ \tau\colon \mathcal{M}_{DH}(\Sigma, \SL(n,\C))\to\mathcal{M}_{DH}(\Sigma, \SL(n,\C))$ covering the inversion at the unit circle $ \lambda\mapsto \overline{\lambda}^{-1}$.
\end{enumerate}
\end{definition}

It is easily checked that the involutions $\tau$ and $\rho$ are compatible with the $\C^*$-action in the following sense. If $\sigma\in\{\rho,\tau\}$ and $t\in\C^*$, then  
\begin{equation}\label{Eq:Compatibility_sigma_C*action}
\sigma(t.(\bar\partial,D,\lambda)) = \overline t^{-1}.\sigma(\bar\partial,D,\lambda).
\end{equation}


\subsection{Sections of the Deligne--Hitchin moduli space}
In this subsection we recall some concepts and definitions from \cite{BHR}. 
\begin{definition} \label{Def: irred_section}
We call a holomorphic section $s\colon \mathbb CP^1\to\mathcal M_{DH}(\Sigma, \SL(n,\C))$ (i.e. a holomorphic map such that $\pi\circ s = \mathrm{id}_{\mathbb CP^1}$) \emph{irreducible} if the image of $s$ is contained in $\mathcal{M}_{DH}^{s}(\Sigma,\SL(n,\C))$.
\end{definition}
\begin{rem}
Note that we could also call such sections stable by Remark \ref{Rem: irred_lambda_conn}. 
\end{rem}
For every $k\in \mathbb{N}\cup \{\infty\}$, $\lambda$-connections of class $C^k$, instead of $C^\infty$, are defined in an obvious way. 
Also, the notion of holomorphic  $\lambda$-connections of class $C^k$ is defined correspondingly.
The next lemma shows their relation to (local) irreducible sections.

\begin{lemma}\cite[Lemma 2.2]{BHR}\label{lift-section}
Let $s\colon B\to \mathcal{M}_{DH}^{s}(\Sigma,\SL(n,\C))$ be an irreducible section where $B\subset \mathbb CP^1$ is an open neighborhood of $0\in \mathbb CP^1$.
For every $k\,\in\,\mathbb N^{\geq2}$, there exists a holomorphic lift 
\begin{equation} \label{eq:lift-section}
\widehat s(\lambda) =  (\overline{\partial}(\lambda), D(\lambda),\lambda) = (\overline\partial + \sum_{k=1}^\infty\lambda^k\Psi_k, \lambda\partial + \Phi + \sum_{k=2}^\infty\lambda^k\Phi_k,\lambda), \quad\lambda\in B'
\end{equation}
of $s$ to the space of holomorphic $\lambda$-connections of class $C^k$.
Here $B'\subset B$ is an open neighborhood of $0$ which equals $B$ if $B\subsetneq \mathbb CP^1$ and equals $\C$ if $B=\mathbb CP^1$.
In the latter case, there also exists a lift $^-\widehat s$ on $\mathbb CP^1\setminus\{0\}$. 
\end{lemma}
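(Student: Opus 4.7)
The plan is to construct local holomorphic lifts via a slice theorem for the complex gauge action, normalize them at $\lambda=0$ by a single constant gauge transformation, and globalize using the Stein geometry of $B'$.

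First, I would set up the problem in a Banach framework in which $\lambda$-connections are of class $C^k$ with $k\geq 2$ and gauge transformations are of class $C^{k+1}$. On the stable locus, the action of $\mathcal G_\C^{k+1}$ has stabilizer equal to the finite center $Z\subset\SL(n,\C)$. A slice theorem then realizes the quotient map from stable $C^k$ $\lambda$-connections onto $\mathcal M_{DH}^s(\Sigma,\SL(n,\C))$ as a holomorphic principal $\mathcal G_\C^{k+1}/Z$-bundle, so $s$ admits local holomorphic lifts near every $\lambda_0\in B$.

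Second, I would normalize the lift at $\lambda=0$. Fix a representative $(\overline\partial,\Phi)$ of the stable Higgs bundle $s(0)\in\mathcal M_{DH}^s$ and pick any local lift $\widehat s_0$ near $\lambda=0$. Since $\widehat s_0(0)$ is gauge-equivalent to $(\overline\partial,\Phi,0)$, a single constant gauge transformation $g_0\in\mathcal G_\C$ moves it there, so $\widehat s:=g_0.\widehat s_0$ satisfies $\widehat s(0)=(\overline\partial,\Phi,0)$. The $\lambda$-twisted Leibniz rule for $D(\lambda)$ forces the Taylor coefficient $\partial:=D'(0)$ to be an honest $(1,0)$-operator, while for $k\geq 2$ the coefficients $\Phi_k:=\frac{1}{k!}D^{(k)}(0)$ are $C^\infty$-linear, hence lie in $\Omega^{1,0}(\sln(E))$. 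Analogously, the coefficients $\Psi_k$ of $\overline\partial(\lambda)$ lie in $\Omega^{0,1}(\sln(E))$. This yields the claimed normal form.

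Third, I would globalize over $B'$. Any open subset of $\mathbb CP^1$ omitting at least one point (including $B'=\C$ when $B=\mathbb CP^1$) is Stein, so by the Oka/Bungart principle for holomorphic principal bundles with Banach Lie structure group over Stein spaces the local lifts patch into a global holomorphic lift $\widehat s\colon B'\to$ stable $C^k$ $\lambda$-connections: concretely, the cocycle of $\mathcal G_\C^{k+1}/Z$-valued gauge transformations identifying local lifts on overlaps is trivializable in the holomorphic category. When $B=\mathbb CP^1$, applying the same construction to the Hodge moduli space of $\overline\Sigma$ furnishes the second lift $^-\widehat s$ on $\mathbb CP^1\setminus\{0\}$.

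The main obstacle is the rigorous infinite-dimensional slice theorem, i.e., verifying Fredholmness and ellipticity of the linearized gauge-action operator at stable points and showing that the resulting lift has genuine $C^k$-regularity rather than only a weaker Sobolev regularity. These are handled by elliptic regularity, using stability to invert the linearization modulo the (trivially acting) center, but they require careful bookkeeping of function spaces.
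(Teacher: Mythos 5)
Your proposal is correct and follows essentially the same route as the proof the paper relies on (it cites \cite[Lemma 2.2]{BHR} and sketches the strategy in Remark \ref{rem: lift-section-local}): local holomorphic lifts from the principal $\mathcal G_\C/Z$-bundle structure on the stable locus in a Banach ($C^{k+1}$-gauge) setting, then gluing over the Stein open set $B'$ via Bungart's Oka principle, with exactly the loss of regularity from $C^\infty$ to $C^k$ that the paper attributes to this gluing. The only point worth making explicit is the topological triviality of the pulled-back bundle over the open Riemann surface $B'$ (immediate since the structure group is connected and $B'$ is homotopy equivalent to a $1$-complex), which Bungart's theorem needs as input before yielding holomorphic triviality.
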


\begin{remark}\label{rem: lift-section-local}
The proof in \cite{BHR}	is formulated for $\SL(2,\C)$ and global irreducible sections but generalizes to the setup of Lemma \ref{lift-section}.
Note that if $B$ is sufficiently small, any irreducible section $s$ on $B$ admits a lift to the space of holomorphic $\lambda$-connections of class $C^\infty$. 
We lose regularity when such local lifts are glued together over larger $B$ though, see \cite{BHR} for details.

We further observe that if $s:B\to \mathcal{M}_{DH}(\Sigma, \SL(n,\C))$ is a local section around $0\in \mathbb{C}P^1$ such that $s(0)$ is a stable Higgs bundle, then there is an open neighborhood $B'\subset B$ of $0$ such that $s_{|B'}$ maps to $\mathcal{M}_{DH}^{s}(\Sigma,\SL(n,\C))$.
In particular, the germs of such sections always admit lifts to the space of holomorphic $\lambda$-connections. 

Finally, the above lemma applies for sections locally defined around $\infty\in\mathbb CP^1$ in the obvious way.
%
\end{remark}

Given an irreducible holomorphic section $s$ with lifts $\widehat{s},\, ^-\widehat{s}$ over $\C$ and $\mathbb CP^1\setminus\{0\}$ respectively, we will often work with the associated $\C^*$-family of flat connections
\[
^+\nabla = \bar\partial(\lambda) + \lambda^{-1}D(\lambda) = \lambda^{-1}\Phi +\nabla + ...
\]
and $^-\nabla$ defined similarly over $\C P^1\setminus\{0,\infty\}$. Here we write $\nabla = \bar\partial + \partial$ in the notation of equation \eqref{eq:lift-section}.
One can show \cite{BHR}, that there exists a holomorphic $\C^*$-family $g(\lambda)$ of $\mathrm{GL}(n,\C)$-valued gauge transformations, unique up to multiplication by a holomorphic scalar function,  such that $^+\nabla^\lambda.g(\lambda) = \ ^-\nabla^\lambda$. 

Irreducible holomorphic sections corresponding to solutions of the self-duality equations have the special property that we have lifts such that $^+\nabla = \ ^-\nabla$ on $\C^*$, i.e. we can arrange $g\equiv \mathrm{id}_E$ in the above discussion. This is axiomatised as follows.

\begin{definition}\label{admissible}
We call a holomorphic section $s$ of $\mathcal M_{DH}(\Sigma, \SL(n,\C))$ {\it admissible} if it admits a
lift $\widehat s$ on $\mathbb C$ of the form
\[{\widehat s}(\lambda)\,=\,(\overline{\partial}+\lambda \Psi,\,\lambda\partial+\Phi,\, \lambda)\]
for a Dolbeault operator $\overline\partial$ of type $(0,1)$, a
Dolbeault operator $\partial$ of type $(1,0)$, a $(1,0)$-form $\Phi$
and a $(0,1)$-form $\Psi$, such that $(\overline{\partial},\Phi)$ and $(\partial,\Psi)$ are  semi-stable Higgs pairs on $\Sigma$ respectively $\overline\Sigma$.
\end{definition}

In the rank $2$ case the family of gauge transformations $g(\lambda)$ such that $^+\nabla.g = \ ^-\nabla$ can be used to define the following invariant of an irreducible section $s$. 

\begin{definition}\label{Def: parity}
Let $s$ be an irreducible section of $\mathcal M_{DH}(\Sigma,\SL(2,\C))$ with associated families $^+\nabla$ and $^-\nabla$ over $\C$ and $\C P^1\setminus\{0\}$ respectively. Consider a holomorphic $\C^*$-family $g(\lambda)$ of $\mathrm{GL}(2,\C)$-valued gauge transformations such that $^+\nabla^\lambda.g(\lambda) = ^-\nabla^\lambda$. The \emph{parity of $s$} is the parity of the degree of the holomorphic function $\det g\colon \C^*\to\C^*$.
\end{definition}

\begin{remark}\label{rem:lift-section-admissible}
\begin{enumerate}[(i)]
\item We remark that the parity of an irreducible section $s$ is zero, if and only if we can arrange the family $g(\lambda)$ gauging $^+\nabla$ to $^-\nabla$ to be $\SL(2,\C)$-valued. In particular, any admissible section has parity zero.

In higher rank $n>2$ one can construct a similar invariant given by $\mathrm{deg}(\det g) \mod n\in\mathbb{Z}/n\mathbb{Z}$. For groups other than $\SL(n,\C)$ it is not obvious what an appropriate generalisation of this invariant might be.  
\item  Let $s$ be an irreducible holomorphic section which is not admissible. Consider the holomorphic family $g(\lambda)\colon  \Sigma\times\C^*\to\GL(2,\C)$ such that $^+\nabla.g(\lambda) = \, ^-\nabla$ 
and interpret this as a map $g\colon \Sigma\to\Lambda\GL(2,\C)$. Here $\Lambda\GL(2,\C)$ is (a suitable Sobolev completion of) the group of holomorphic maps $\C^*\to\GL(2,\C)$. Then it is shown in \cite[Proposition 2.7.]{BHR} that $g(\Sigma)\subset\Lambda\GL(2,\C)$ cannot be fully contained in the big cell of loops that admit a Birkhoff factorization of the form $g = g_+g_-$, where $g_+,g_-$ extends holomorphically to $\lambda =0$, $\lambda = \infty$ respectively. In other words, for a non-admissible section there is a non-empty subset $\gamma\subset \Sigma$ on which we cannot write $g = g_+g_-$. 
\end{enumerate}
\end{remark}

\begin{definition}\label{Def:sigmaReal}
Let $\sigma \in \{\tau,\rho\}$, where $\tau$ and $\rho$ denote the anti-holomorphic involutions defined in Definition \ref{Def:AntiHolInv}. A holomorphic section $s\colon \mathbb CP^1\longrightarrow\mathcal M_{DH}(\Sigma,\SL(n,\C))$ of the fibration $\pi\colon \mathcal M_{DH}(\Sigma,\SL(n,\C))\longrightarrow \mathbb CP^1$ is called {\it real with respect to $\sigma$}, or just {\it $\sigma$-real}, if 
$s(\lambda)\,=\, \sigma(s(\widetilde\sigma(\lambda)))$ for all $\lambda\,\in\, \mathbb CP^1$, where $\tilde\sigma\colon\mathbb CP^1\to\mathbb CP^1$ is the induced involution.
\end{definition}

If we have a lift $\nabla^\lambda$ on $\mathbb C\,\subset\,\mathbb CP^1$ of a $\sigma$-real holomorphic section $s$, then for every $\lambda\,\in\,\mathbb C^*$ there is a gauge
transformation $g(\lambda)$ such that the following equation holds
\begin{equation}\label{realeqsecsigma}
\nabla^\lambda.g(\lambda)\,=\,\overline{\nabla^{\widetilde{\sigma}(\lambda)}}^*.
\end{equation}

If $s$ is $\sigma$-real and irreducible of parity $0$, we can choose the family of gauge transformations $g(\lambda)$ in \eqref{realeqsecsigma} to depend holomorphically on $\lambda$ and may assume that it takes values in $\SL(2,\mathbb C)$. By irreducibility, the holomorphic family $g(\lambda)$ is then uniquely determined up to a sign.
By \cite[Lemma 2.15]{BHR} the following definition makes sense.

\begin{definition}\label{Def:RealPosNeg}
Let $\sigma\,\in\,\{\tau,\,\rho\}$ and consider an irreducible $\sigma$-real holomorphic section $s:\C P^1\to\mathcal M_{DH}(\Sigma,\SL(2,\C))$ of parity $0$. Let $\nabla^\lambda$ be a lift of $s$ over $\C$ and let $g(\lambda)$, $\lambda\,\in\,\C^*$, be a holomorphic family of $\SL(2,\mathbb C)$-valued gauge transformations such that 
\eqref{realeqsecsigma} holds. Then $s$ is called {\it $\sigma$-positive} if $-g(\lambda)\overline{g(\widetilde\sigma(\lambda))^{-1}}^t\,=\,{\rm Id}$ and {\it $\sigma$-negative} if $-g(\lambda)\overline{g(\widetilde\sigma(\lambda))^{-1}}^t\,=\,-{\rm Id}$.
\end{definition}

\begin{remark}\label{Rem: harmonicmaps}
\begin{enumerate}[(i)]
\item The signs in Definition \ref{Def:RealPosNeg} are chosen to be consistent with \cite{BHR}, where the fact that an $\SL(2,\C)$ bundle is isomorphic to its dual is incorporated into the definition, see \cite[\S 1.6]{BHR} for details.
\item If $(\bar\partial,\Phi)$ is a solution to the $\SU(2)$-self-duality equations, the associated twistor line is given by the $\C^*$-family of flat $\SL(2,\C)$-connections 
\[
\nabla^{\lambda} = \lambda^{-1}\Phi + \nabla_h + \lambda\Phi^{*_h}.
\]
It is shown in  \cite[Theorem 3.6]{BHR} that the irreducible solutions of the self-duality equations correspond precisely to the admissible, irreducible $\tau$-negative sections $\C P^1\to\mathcal M_{DH}(\Sigma,\SL(2,\C))$. By the non-abelian Hodge correspondence, these correspond to equivariant harmonic maps $f: \tilde\Sigma\to H^3 = \SL(2,\C)/\SU(2)$. 
\item On the other hand, $\rho$-negative  sections $\C P^1\to\mathcal M_{DH}(\Sigma,\SL(2,\C))$ are automatically admissible and correspond to equivariant harmonic maps $f:\tilde\Sigma\to S^3 = \SU(2)$. These are obtained from solutions to the harmonic map equations 
\begin{equation}\label{eq:hmS3}
\begin{aligned}
F^{\nabla_h}-[\Phi\wedge\Phi^{*_h}]&=0, \\
 \bar{\partial} \Phi&=0.
 \end{aligned}
\end{equation}
The associated sections are of the form $\nabla^\lambda = \lambda^{-1}\Phi + \nabla_h -\lambda\Phi^{*_h}$.
\end{enumerate}
\end{remark}

\subsection{Twisting}\label{Subsec:Twisting}
We briefly review the twisting or Gau\ss \ map procedure that played a central role in the construction of $\tau$-positive holomorphic sections of $\mathcal M_{DH}(\Sigma, \SL(2,\C))$ in \cite{BHR}. Starting from an irreducible solution $(\tilde\nabla,\tilde\Phi)$ to the $\SU(2)$-harmonic map equations \eqref{eq:hmS3} with nilpotent Higgs field, one considers the associated family of flat connections 
\[
\tilde \nabla^\lambda = \tilde \nabla + \lambda^{-1}\tilde \Phi - \lambda\tilde \Phi^*.
\]
Denote by $L$ the kernel bundle of $\tilde \Phi$, so that we get a smooth splitting $E = L\oplus L^\perp$. 
To this family $\tilde\nabla^\lambda$ of flat connections, one associates a new family $\nabla^\lambda$ of flat connections by \emph{twisting}, which is given by 
\[
\nabla^\lambda = \nabla^{\lambda^2}.h(\lambda),
\]
where 
\[
h(\lambda) = \left(\begin{array}{cc} \frac{1}{\sqrt{\lambda}} & 0\\ 0 & \sqrt{\lambda}\end{array}\right)
\]
is written with respect to the splitting $E=L\oplus L^\perp$. 
In \cite{BHR} it was shown that the so defined $\C^*$-family of flat connections extends to define an irreducible, admissible holomorphic section over all of $\mathbb CP^1$. In this section we study this procedure more systematically. 

Let $s\colon \mathbb CP^1\to\mathcal M_{DH}(\Sigma, \SL(n,\C))$ be a holomorphic section. Then we may use the $\C^*$-action to define a new holomorphic section $\tilde s$ over $\C^*$: 
\[
\tilde s: \C^*\to\mathcal M_{DH}(\Sigma, \SL(n,\C)),\qquad \tilde s(\lambda) = \lambda^{-1}.s(\lambda^2).
\]
Since the $\C^*$-action on $\mathcal M_{DH}(\Sigma, \SL(n,\C))$ covers the obvious one on $\mathbb CP^1 = \C\cup \{\infty\}$ it is clear that $\tilde s$ is a holomorphic section over $\C^*$.
It is a natural question to ask under what conditions on $s$ the twisted section $\tilde s$ extends to define a holomorphic section $\tilde s: \mathbb CP^1\to\mathcal M_{DH}(\Sigma, \SL(n,\C))$. 

\begin{definition}\label{Def:twistable}
We call a holomorphic section $s\colon \mathbb CP^1\to\mathcal M_{DH}(\Sigma,\SL(n,\C))$ \emph{twistable} if $\tilde s\colon  \C^*\to\mathcal M_{DH}(\Sigma, \SL(n,\C)), \lambda\mapsto \lambda^{-1}.s(\lambda^2)$, extends to a holomorphic section $\tilde s\colon \mathbb CP^1\to\mathcal M_{DH}(\Sigma, \SL(n,\C))$, which we call the \emph{twist} of $s$. 
\end{definition}

\begin{remark}\label{rem:twistlambdaconn}
In terms of $\lambda$-connections, we can view the construction of the twist as follows. Write 
\[
s(\lambda) = [(\overline\partial(\lambda),D(\lambda),\lambda)].
\]
Then for $\lambda\in\C^*$
\[
\tilde s(\lambda) = \lambda^{-1}.s(\lambda^2) = \lambda^{-1}.[(\overline \partial(\lambda^2), D(\lambda^2),\lambda^2)] = [(\overline \partial(\lambda^2), \lambda^{-1} D(\lambda^2), \lambda)].
\]
\end{remark}

The construction of \cite{BHR} suggests that there exists a transformation from the space of $\rho$-real twistable sections to the space of $\tau$-real sections. The precise result is as follows.

\begin{proposition}\label{prop:twistreal}
\begin{enumerate}[i)]
\item Suppose that $s\colon \mathbb CP^1\to \mathcal M_{DH}(\Sigma,\SL(n,\C))$ is a twistable holomorphic section. Then the twist $\tilde s$ is $N$-invariant.
\item Suppose that $s\colon \mathbb CP^1\to \mathcal M_{DH}(\Sigma,\SL(n,\C))$ is a $\rho$-real twistable holomorphic section. Then the twist $\tilde s$ is again $\rho$-real and moreover $N$-invariant, hence $\tau$-real.
\item Suppose that $s\colon \mathbb CP^1\to \mathcal M_{DH}(\Sigma,\SL(n,\C))$ is a $\tau$-real and $N$-invariant twistable holomorphic section. Then the twist $\tilde s$ is again $\tau$-real and moreover $N$-invariant.
\end{enumerate}
\end{proposition}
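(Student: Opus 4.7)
The plan is to verify all three parts by direct pointwise computation on $\C^* \subset \C P^1$, relying only on three elementary ingredients: the definition $\tilde s(\lambda) = \lambda^{-1}.s(\lambda^2)$ of the twist, the formula \eqref{Eq: C*action} for the $\C^*$-action on $\mathcal M_{DH}$, and the compatibility identity \eqref{Eq:Compatibility_sigma_C*action} of the anti-holomorphic involutions with this action, namely $\sigma(t.x) = \bar t^{-1}.\sigma(x)$ for $\sigma \in \{\tau,\rho\}$ and $t \in \C^*$. Since $\tilde s$ is assumed to be a holomorphic section over all of $\C P^1$ and the involutions $N,\tau,\rho$ are continuous, any identity established on the dense subset $\C^*$ will extend automatically to $\C P^1$.

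For part (i), I would first observe that $N$ is precisely the action of $-1 \in \C^*$, so $N(x) = (-1).x$ on $\mathcal M_{DH}$. A one-line calculation then yields
\[
N(\tilde s(-\lambda)) = (-1).\bigl((-\lambda)^{-1}.s(\lambda^2)\bigr) = \lambda^{-1}.s(\lambda^2) = \tilde s(\lambda),
\]
using that the $\C^*$-action is associative in $t$.

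For part (ii), I would apply \eqref{Eq:Compatibility_sigma_C*action} with $\sigma = \rho$ and then use the $\rho$-reality of $s$ in the form $\rho(s(\mu)) = s(\bar\mu^{-1})$:
\[
\rho(\tilde s(\bar\lambda^{-1})) = \rho\bigl(\bar\lambda.s(\bar\lambda^{-2})\bigr) = \lambda^{-1}.\rho(s(\bar\lambda^{-2})) = \lambda^{-1}.s(\lambda^2) = \tilde s(\lambda).
\]
This establishes the $\rho$-reality of $\tilde s$; combined with part (i), the identity $\tau = \rho \circ N$ immediately yields $\tau$-reality.

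Part (iii) is the one genuinely requiring both hypotheses on $s$. The analogous computation using \eqref{Eq:Compatibility_sigma_C*action} for $\sigma = \tau$ and the $\tau$-reality relation $\tau(s(\mu)) = s(-\bar\mu^{-1})$ gives
\[
\tau(\tilde s(-\bar\lambda^{-1})) = \tau\bigl(-\bar\lambda.s(\bar\lambda^{-2})\bigr) = -\lambda^{-1}.\tau(s(\bar\lambda^{-2})) = -\lambda^{-1}.s(-\lambda^2).
\]
This is where the $N$-invariance of $s$, rewritten as $s(-\lambda^2) = N(s(\lambda^2)) = (-1).s(\lambda^2)$, becomes essential: it cancels the extra minus sign and yields $\lambda^{-1}.s(\lambda^2) = \tilde s(\lambda)$. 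Without $N$-invariance, $\tilde s$ would fail to be $\tau$-real in general, which explains the mild asymmetry between parts (ii) and (iii). There is no real obstacle in the argument; the whole proof is bookkeeping, and the only subtlety is tracking the interplay between the base involutions $\lambda \mapsto -\lambda$, $\lambda\mapsto \bar\lambda^{-1}$, $\lambda\mapsto -\bar\lambda^{-1}$ and their total-space lifts $N,\rho,\tau$ together with the semilinearity in $t$ encoded by \eqref{Eq:Compatibility_sigma_C*action}.
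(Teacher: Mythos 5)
Your proof is correct and follows essentially the same route as the paper's: a direct pointwise verification on $\C^*$ using the definition of the twist, the $\C^*$-action, and the compatibility identity \eqref{Eq:Compatibility_sigma_C*action}, with the extension to all of $\C P^1$ by continuity. The only cosmetic difference is in part (iii), where the paper first deduces that $s$ is $\rho$-real from $\tau$-reality plus $N$-invariance and then invokes part (ii), whereas you redo the computation directly with $\tau$ and use $N$-invariance to absorb the extra sign; both amount to the same bookkeeping.
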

\begin{proof}
\begin{enumerate}[(i)]
\item We have by definition $\tilde s(\lambda) = \lambda^{-1}.s(\lambda)$ and so
\[
\tilde s(-\lambda) = (-\lambda^{-1}).s((-\lambda)^{2}) = (-\lambda)^{-1} s(\lambda^2) = (-1).\lambda^{-1}.s(\lambda^2) = N(\tilde s(\lambda)).
\]
\item We have $\tilde s(\lambda) = \lambda^{-1}.s(\lambda)$. Therefore
\[
\rho(\tilde s(\lambda)) = \rho(\lambda^{-1}.s(\lambda^2)) = \overline \lambda.\rho(s(\lambda^2)) = \overline\lambda.s(\overline \lambda^{-2}) = \tilde s(\overline \lambda^{-1})
\]
Thus,  using (i),
\[
\tau(\tilde s(\lambda)) = N(\rho (\tilde s(\lambda))) = N(\tilde s(\overline\lambda^{-1})) = \tilde s (-\overline\lambda^{-1}).
\]
\item Since the section $s$ is $\tau$-real and $N$-invariant, it must be $\rho$-real, since  
$$\rho(s(\lambda)) = N(\tau(s(\lambda))) = N(s(-\overline{\lambda}^{-1}) = s(\overline{\lambda}^{-1}). $$
By part (ii) the twist $\tilde s$ is therefore again $\tau$-real and $N$-invariant.
\end{enumerate}
\end{proof}

\begin{remark}
\begin{enumerate}[(i)]
\item In part (iii) of Proposition \ref{prop:twistreal} the assumption that the $\tau$-real section $s$ is moreover $N$-invariant is needed due to equation \eqref{Eq:Compatibility_sigma_C*action} with $\sigma = \tau$. In general we get $\tau(\tilde{s}(\lambda)) = \overline{\lambda}.s(-\overline{\lambda}^{-2})$ and the $N$-invariance then ensures the $\tau$-reality of $\tilde s$. 
\item Theorem 3.4 in \cite{BHR} can be interpreted as the statement that, in the $\SL(2,\C)$-case, an irreducible admissible $\rho$-negative section $s$ with nilpotent Higgs field is twistable and that the twist $\tilde s$ is $\tau$-positive.
\end{enumerate}
\end{remark}

The following proposition describes a class of twistable sections in the $\SL(2,\C)$-case. 

\begin{proposition}\label{twistablepro}
Let $s\colon \mathbb CP^1\to\mathcal M_{DH}(\Sigma,\SL(2,\C))$ be an irreducible holomorphic section such that the stable Higgs pairs $s(0) = (\overline{\partial},\Phi^+)$ and $s(\infty) = (\partial, \Phi^-)$ on $\Sigma$ and $\overline{\Sigma}$ have nilpotent Higgs fields. Then $s$ is twistable and the twist $\tilde s$ is an irreducible section of $\mathcal M_{DH}(\Sigma,\SL(2,\C))$. 
\end{proposition}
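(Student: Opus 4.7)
The plan is a local analysis around $\lambda = 0$; the case $\lambda = \infty$ is analogous after swapping $\Sigma \leftrightarrow \overline{\Sigma}$ and replacing $\Phi^+$ by $\Phi^-$. On $\C^*$ the twist $\tilde s(\lambda) = \lambda^{-1}.s(\lambda^2)$ is holomorphic by construction, so only the extension across $\lambda = 0$ requires argument. Since $s(0)$ is a stable Higgs bundle, Remark \ref{rem: lift-section-local} furnishes on a neighborhood $B \ni 0$ a smooth lift $\hat s(\lambda) = (\bar\partial + \sum_{k\geq 1}\lambda^k\Psi_k,\ \lambda\partial + \Phi^+ + \sum_{k\geq 2}\lambda^k\Phi_k,\ \lambda)$. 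By Remark \ref{rem:twistlambdaconn}, the twist on $B\setminus\{0\}$ is represented by the family $(\bar\partial(\lambda^2),\ \lambda^{-1}D(\lambda^2),\ \lambda)$, whose only obstruction to extension at $\lambda = 0$ is the pole $\lambda^{-1}\Phi^+$ in the $\lambda$-connection component.

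To cancel this pole I would exploit the nilpotency of $\Phi^+$. If $\Phi^+\equiv 0$ there is nothing to do; otherwise $L := \ker\Phi^+$ is a holomorphic line subbundle of $(E,\bar\partial)$. Fix a smooth complement $L'$ so that $E = L \oplus L'$ as $C^\infty$-bundles, and write $\Phi^+ = \begin{pmatrix}0 & \phi\\ 0 & 0\end{pmatrix}$ in this splitting. Apply the $\SL(2,\C)$-valued gauge transformation $h(\lambda) = \mathrm{diag}(\lambda^{-1/2},\lambda^{1/2})$ relative to $E = L\oplus L'$. Although $h$ itself is multivalued in $\lambda$, conjugation by $h$ multiplies the $(1,2)$- and $(2,1)$-entries of any endomorphism by $\lambda$ and $\lambda^{-1}$ respectively and leaves diagonal entries fixed; since every matrix entry in $(\bar\partial(\lambda^2),\ \lambda^{-1}D(\lambda^2))$ is a sum of integer powers of $\lambda$, the gauged family $h^{-1}.\tilde{\hat s}(\lambda)$ is automatically single-valued. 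The pole $\lambda^{-1}\phi$ in the $(1,2)$-entry becomes $\phi$; the $(2,1)$-entries of $\bar\partial(\lambda^2)$ and of $\lambda^{-1}D(\lambda^2)$ vanish to sufficiently high order in $\lambda$ (the crucial input being $(\bar\partial)_{21} = 0$, which is precisely the $\bar\partial$-invariance of $L$) to absorb the factor $\lambda^{-1}$ from conjugation. The Leibniz corrections $h^{-1}\bar\partial h$ and $\lambda h^{-1}\partial h$ are then handled using the identities
\[
\bar\partial\pi_L = \begin{pmatrix}0 & -\beta\\ 0 & 0\end{pmatrix}, \qquad \partial\pi_L = \begin{pmatrix}0 & *\\ * & 0\end{pmatrix},
\]
obtained by differentiating $\pi_L^2 = \pi_L$; both produce integer-power expressions regular at $\lambda = 0$.

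This yields a holomorphic family of $\lambda$-connections on $B$ representing $\tilde s$. Evaluating at $\lambda = 0$ one obtains the $\C^*$-limit of the nilpotent stable Higgs bundle $s(0)$: a Higgs bundle whose underlying holomorphic bundle is the associated graded of $0\subset L\subset E$ and whose Higgs field is induced by $\Phi^+:E/L\to L\otimes K$. Stability of this limit follows from the inequality $\deg L < 0$ forced by stability of $s(0)$. The analogous argument at $\lambda = \infty$ using the nilpotency of $\Phi^-$ completes the construction of $\tilde s$, and irreducibility of $\tilde s$ on $\C^*$ is immediate from the $\C^*$-equivariance and the irreducibility of $s$. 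The main technical hurdle is the bookkeeping required to see that the half-integer powers of $\lambda$ in $h(\lambda)$ combine with the integer powers in $\hat s(\lambda^2)$ to yield a family that is simultaneously single-valued in $\lambda$ and holomorphic at $\lambda = 0$; this depends crucially on both the nilpotency of $\Phi^\pm$ and the $\bar\partial$-invariance of $\ker\Phi^\pm$.
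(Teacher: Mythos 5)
Your overall strategy coincides with the paper's: gauge the family $(\bar\partial(\lambda^2),\lambda^{-1}D(\lambda^2),\lambda)$ by a diagonal transformation adapted to the splitting $E=L\oplus L'$ with $L=\ker\Phi^+$ (your $\mathrm{diag}(\lambda^{-1/2},\lambda^{1/2})$ differs from the paper's $\mathrm{diag}(\lambda^{-1},1)$ only by the central scalar $\lambda^{1/2}$, which acts trivially), and the inputs you isolate --- nilpotency of $\Phi^{\pm}$, $\bar\partial$-invariance of $L$, $\deg L<0$, the $(1,0)$-type of the $D$-part --- are exactly the ones the paper uses. The single-valuedness remark and the bookkeeping of the Leibniz terms via the off-diagonal parts of $\bar\partial$ and $\partial$ are correct, as is the reduction of irreducibility over $\C^*$ to that of $s$.

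There is, however, a concrete error in your identification of $\tilde s(0)$, and it propagates into the stability step. The $(2,1)$-entry of $\lambda^{-1}D(\lambda^2)$ contains the term $\lambda\beta$ coming from $\lambda\partial$, where $\beta=\pi_{L'}\circ\partial|_{L}\in\Omega^{1,0}(\mathrm{Hom}(L,L'))$ is the second fundamental form of $L$ in $(E,\partial)$. Conjugation by $h(\lambda)$ multiplies it by $\lambda^{-1}$, so it does \emph{not} disappear in the limit: it survives as the constant term of the new $D$-operator and hence as a component of the Higgs field of $\tilde s(0)$. The limit is therefore $\bigl(L\oplus L^{-1},\left(\begin{smallmatrix}0&\phi\\ \beta&0\end{smallmatrix}\right)\bigr)$ rather than the associated graded of $s(0)$ with the induced nilpotent Higgs field $\left(\begin{smallmatrix}0&\phi\\ 0&0\end{smallmatrix}\right)$; in particular $\tilde s(0)$ depends on the $1$-jet of the section at $\lambda=0$, not only on the $\C^*$-orbit of $s(0)$, and its Higgs field is generically not nilpotent. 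This extra term is not a technicality: it is precisely what produces the $-\deg L$ shift in Proposition \ref{twisttheenergy} and what makes the twist a genuinely new section. Consequently your stability argument is applied to the wrong object. It can be repaired with the same inputs: any $\tilde\Phi$-invariant line subbundle $M$ with $\deg M\geq 0>\deg L$ satisfies $\mathrm{Hom}(M,L)=0$, hence injects into $L^{-1}$, and invariance forces $\phi(M)\subset M\otimes K\cap L\otimes K=0$, contradicting the injectivity of $\phi\neq 0$ as a sheaf map. But as written, the step ``stability of this limit follows from $\deg L<0$'' rests on a mis-description of the limit; the paper carries the $\beta$-term explicitly and refers to the proof of \cite[Theorem 3.4]{BHR} for the stability of $\tilde s(0)$.
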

\begin{proof}
We need to prove that $\lambda\mapsto\tilde s(\lambda)$ extends to $\lambda=0,\infty$. Both cases work analogously,
so we only deal with $\lambda=0$.
Let us consider a lift $\nabla^\lambda$  of $s$ over $\{\lambda\neq\infty\}\subset\mathbb CP^1$ given by
\[
\nabla^\lambda = \lambda^{-1}\Phi^+ + \nabla + \sum_{k=1}^\infty\lambda^k\Psi_k.
\]
Here $\Phi^+\in\Omega^{1,0}$, and $\Psi_k\in\Omega^1$ for $k\geq 1$.
Then by Remark \ref{rem:twistlambdaconn} we get a lift of $\tilde s$ over $\C^*$ by
\[
\tilde\nabla^\lambda = \lambda^{-2}\Phi^+ + \nabla + \sum_{k=1}^\infty\lambda^{2k}\Psi_k.
\]
The section $\tilde s$ extends to $\lambda =0$ if we can find a $\C^*$-family $h(\lambda)$ of complex gauge transformations such that 
\[
\tilde\nabla^\lambda.h(\lambda) = \lambda^{-1}\tilde\Phi + \tilde\nabla+\sum_{k=1}^\infty\lambda^k\tilde\Psi_k
\]
and the Higgs pair $\tilde{s}(0) = (\bar\partial^{\tilde\nabla},\tilde\Phi)$ is stable. If $\Phi^+ = 0$ there is nothing to prove, so let us assume $\Phi^+\neq 0$. 
By assumption $\Phi^+$ is nilpotent, so let us denote by $L$ its kernel bundle, which must satisfy $\deg L<0$, since $(\bar\partial^\nabla,\Phi^+)$ is a stable Higgs pair by irreducibility of the section $s$ (see Definition \ref{Def: irred_section} and Remark \ref{Rem: irred_lambda_conn}). Take a complementary bundle $L^\perp$. Then, with respect to the splitting $E= L\oplus L^\perp$, we can take 
\[
h(\lambda) = \left(\begin{array}{cc} \lambda^{-1} & 0\\ 0 & 1\end{array}\right),
\]
and get with 
\[
\Phi^+ = \left(\begin{array}{cc} 0 & \phi\\ 0 &0\end{array}\right)
\]
the equation 
\[
\Phi^+.h(\lambda): = h(\lambda)^{-1}\Phi^+ h(\lambda) = \lambda\Phi^+.
\]
The flatness of $\nabla^\lambda$ implies $0=d^{\nabla}\Phi^+ = \overline\partial^{\nabla}\Phi^+$, so that $\nabla$ must be of the form
\[
\nabla = \left(\begin{array}{cc} \nabla^L & \alpha \\ \beta & \nabla^{L^\perp}\end{array}\right),
\]
with $\beta\in \Omega^{1,0}(\mathrm{Hom}(L,L^\perp))$. Then, writing 
\[
\Psi_1 = \left( \begin{array}{cc} \psi_{11} & \psi_{12} \\ \psi_{21} & \psi_{22}\end{array}\right),
\]
we get 
\[
\nabla.h(\lambda) = \left(\begin{array}{cc} \nabla^L & \lambda\alpha \\ \lambda^{-1}\beta & \nabla^{L^\perp}\end{array}\right), \qquad h(\lambda)^{-1} \Psi_1.h(\lambda) = \left( \begin{array}{cc} \psi_{11} &\lambda \psi_{12} \\ \lambda^{-1}\psi_{21} & \psi_{22}\end{array}\right).
\]
With this the lift $\tilde\nabla^\lambda$ transforms to 
\begin{eqnarray*}
\tilde\nabla^\lambda.h(\lambda) &=& \lambda^{-2}\Phi^+.h(\lambda) + \nabla.h(\lambda) + \lambda^2\Psi_1.h(\lambda) + \sum_{k=2}^\infty \lambda^{2k}\Psi_k.h(\lambda)\\
&=&  \lambda^{-1}\left(\begin{array}{cc} 0 & \phi\\ \beta &0\end{array}\right) + \left(\begin{array}{cc} \nabla^L & 0 \\ 0 & \nabla^{L^\perp}\end{array}\right) + \lambda\left(\begin{array}{cc} 0 & \alpha \\ \psi_{21}&0\end{array}\right) + \sum_{k=2}^\infty \lambda^k\tilde\Psi_k.
\end{eqnarray*}
It now follows just like in the proof of \cite[Theorem 3.4.]{BHR} that the section $\tilde s$ extends to $\lambda =0$ and that $\tilde s(0)$ is a stable Higgs pair. Moreover, for any $\lambda\neq 0$ the connection $\tilde\nabla^\lambda = \nabla^{\lambda^2}$ is irreducible, which implies that also $\tilde\nabla^\lambda.h(\lambda)$ is irreducible. Altogether this shows that $\tilde s$ is an irreducible section.
\end{proof}
We expect that a similar construction works for $n>2$ as well. 
The main difficulty is to verify the stability of $\tilde{s}(0)$ and $\tilde{s}(\infty)$ which is more involved for general $n>2$.

\section{The Energy Functional}\label{Sec:defene}
\subsection{The Definition of the Energy Functional} 
Consider a holomorphic section $s\colon  \C P^1\to\mathcal M_{DH}(\Sigma,\SL(n,\C)$. Assume that $s(0)$ is a stable Higgs pair. We shall denote the space of such sections by $\mathcal S$. By Remark \ref{rem: lift-section-local} there exists an neighbourhood $B$ of $0\in\C P^1$ and a lift 
\[
\widehat s(\lambda)=(\overline\partial+\lambda \Psi+\dots,\Phi+\lambda \partial+\dots,\lambda), \qquad \lambda\in B
\]
to the space of $\lambda$-connections with associated family of flat connections 
\[
\nabla^\lambda = \lambda^{-1}\Phi +\nabla + \lambda\Psi + \dots,\qquad \lambda\in B\setminus\{0\}.
\] 
Here $\Phi\in\Omega^{1,0}(\sln(E)), \Psi\in\Omega^{0,1}(\sln(E))$ and $\nabla = \bar\partial +\partial$ is an $\SL(n,\C)$-connection. 
We have seen in Lemma \ref{lift-section} that we can even find a global lift (i.e. $B = \C$) if the section $s$ is irreducible. 

Consider 
\begin{equation}\label{e1}
\mathbf{E}(\widehat s):=\frac{1}{2\pi i}\int_\Sigma \text{tr}(\Phi\wedge \Psi).
\end{equation}

\begin{proposition}\label{defene}
The quantity $\mathbf{E}(\widehat s)$ is independent of the choice of local lift  $\widehat s$ of the local section $s$. It defines a function $\mathcal E\colon\mathcal S\to\C$
\begin{gather*}
\mathcal E\colon \mathcal S\to\C\\
 \mathcal E(s):=\mathbf{E}(\widehat s).
 \end{gather*}
The function $\mathcal E\colon \mathcal S\to\C$ is holomorphic in the following sense: if $T$ is a complex manifold and $s\colon T\to\mathcal S, t\mapsto s_t$ is a holomorphic family of sections, then the function $\mathcal E\circ s\colon T\to\C, t\mapsto \mathcal E(s_t)$ is holomorphic.
\end{proposition}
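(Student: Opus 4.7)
The plan is to split the statement into three pieces, tackled in order: (a) gauge-independence of $\mathbf{E}(\widehat s)$ in the choice of lift, (b) the resulting function $\mathcal E$ on $\mathcal S$, and (c) holomorphic dependence on parameters. Part (b) is immediate from (a).

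For (a), I would take two local lifts $\widehat s$ and $\widehat s'$ of $s$ on a common neighborhood $B$ of $0\in\mathbb CP^1$. They are related by a holomorphic gauge family $g(\lambda) = g_0 + \lambda g_1 + O(\lambda^2)$ in $\mathcal G_\C = \Gamma(\SL(E))$, and $g_0\in\SL(n,\C)$ restricts to an automorphism of the stable Higgs bundle $s(0) = (\overline{\partial},\Phi)$. By Remark \ref{Rem: irred_lambda_conn}(iv), stability forces $g_0$ to be a constant scalar multiple of $\mathrm{id}_E$. Writing $g(\lambda) = g_0(\mathrm{id}_E + \lambda h_1 + O(\lambda^2))$ with $h_1\in\sln(E)$, a direct expansion of the gauge action on $\widehat s$, using the centrality of $g_0$ together with the $\lambda$-Leibniz rule for the operator $D_\lambda$, shows that $\Phi$ is invariant while $\Psi$ is replaced by $\Psi + \overline{\partial} h_1$. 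The order-$\lambda^0$ part of the integrability condition for $\widehat s$ is the Higgs equation $\overline{\partial}\Phi = 0$, so since $\text{tr}(\Phi h_1)$ is a $(1,0)$-form on the Riemann surface $\Sigma$ one has
\[
d\,\text{tr}(\Phi h_1) = \overline{\partial}\,\text{tr}(\Phi h_1) = -\text{tr}(\Phi\wedge \overline{\partial} h_1).
\]
Stokes' theorem on the closed surface gives $\int_\Sigma \text{tr}(\Phi\wedge \overline{\partial} h_1) = 0$, so $\mathbf{E}(\widehat s') = \mathbf{E}(\widehat s)$. This establishes well-definedness, so that $\mathcal E(s) := \mathbf{E}(\widehat s)$ is an unambiguously defined function on $\mathcal S$.

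For (c), fix $t_0\in T$. Stability is an open condition, so $s_t(0)$ remains a stable Higgs bundle for $t$ in a neighborhood of $t_0$, and the lift construction of Lemma \ref{lift-section} and Remark \ref{rem: lift-section-local} can be carried out with holomorphic dependence on $t$, producing a lift $\widehat s_t$ that is jointly holomorphic in $(\lambda, t)$. The resulting $\Phi(t)$ and $\Psi(t)$ are then holomorphic in $t$, and differentiating under the integral over the fixed compact surface $\Sigma$ shows that $\mathcal E(s_t)$ is holomorphic in $t$. The main technical obstacle I foresee is precisely the holomorphic $(\lambda, t)$-dependence of the local lift; by contrast, the gauge-invariance in (a), though it requires careful bookkeeping with the $\lambda$-Leibniz rule, is a formal consequence of stability together with the integration by parts above.
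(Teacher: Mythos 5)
Your proof is correct and follows essentially the same route as the paper's: factor the gauge family as $g_0$ times a family equal to the identity at $\lambda=0$, reduce to the first-order correction of $\Psi$ by $\overline{\partial}h_1$, and kill it via Stokes' theorem using $\overline{\partial}\Phi=0$. The only quibble is that you do not actually need $g_0$ to be central (and since two lifts need not agree on the nose at $\lambda=0$, $g_0$ is a priori only an isomorphism between two stable Higgs pairs rather than an automorphism, hence not forced to be scalar); conjugation-invariance of $\mathrm{tr}(\Phi\wedge\Psi)$ already suffices, which is what the paper uses.
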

\begin{proof}
Write $\widehat s = (\overline\partial+\lambda \Psi+\dots,\Phi+\lambda \partial+\dots,\lambda)$ as above. 
Let $\widehat s.g$ be another lift of $s$, where $g$ is a $\lambda$-dependent family of gauge transformations 
\[
g(\lambda)=g_0+\lambda g_1+..., \quad
\]
defined in a neighbourhoof of $0\in\C$.
We split $g(\lambda)$ into the product of a constant gauge transformation and a gauge transformation which equals the  identity for $\lambda=0$:
\[
g(\lambda)=g_0 (g_0^{-1} g(\lambda)).
\]
It is clear that $\mathbf{E}(\widehat s.g_0) = \mathbf{E}(\widehat s)$, since $\Phi$ and $\Psi$ are just conjugated by $g_0$. Thus, we may assume that $g_0 = 1$. Then 
\[
g(\lambda) = 1 + \lambda g_1 +\dots, \quad g^{-1}(\lambda) = 1 - \lambda g_1  + \dots\; .
\]
Thus,
\[
\widehat s(\lambda).g(\lambda) = (\bar\partial +\lambda(\Psi-\bar\partial g_1)+\dots, \Phi + \lambda\partial + \dots,\lambda)
\]
It then follows from Stokes' theorem and $\bar\partial \Phi = 0$ that
\[
\mathbf{E}(\widehat s.g(\lambda)) = \int_\Sigma \mathrm{tr}(\Phi \wedge (\Psi-\bar\partial g_1)) = \mathbf{E}(\widehat s).
\]
The holomorphicity of $\mathcal E$ as stated in the Proposition follows directly from the definition of $\mathcal E$. 
\end{proof}

\begin{definition}
We call $\mathcal E$ the \emph{energy functional} on the space $\mathcal S$ of holomorphic sections of $\mathcal M_{DH}(\Sigma, \SL(n,\C))$ admitting a local lift to the space of holomorphic $\lambda$-connections near $\lambda =0$. 
\end{definition}
\begin{remark}
\begin{enumerate}[(i)]
\item For the definition of $\mathcal E(s)$ we do not have to require the Higgs pair $s(0)$ to be stable. We have just made this assumption to streamline the exposition. The proof of Proposition \ref{defene} works without modification for any local section $s$ for which there exists a lift $\widehat{s}$ to the space $\lambda$-connections in a neighbourhood of $0\in\C P^1$. In particular we can define the energy of an admissible section, and in particular of a twistor line.
\item It is immediate from the definition that the energy functional $\mathcal E$ does only depend on the complex analytic structure
of the Deligne--Hitchin moduli space, and not on the identification of $\mathcal M_{DH}(\Sigma, \SL(n,\C))$ with the twistor space of the Higgs bundle moduli space.
\item The existence and relevance of such a functional is implicitly contained in 
\cite[Theorem 13.17]{HitchinHM} and  \cite[Theorem 9]{BabBob} for the case of tori, and \cite[Theorem 8]{He-spec} for holomorphic sections in certain
equivariant moduli spaces.
\end{enumerate}
\end{remark}
The name \emph{energy functional} is motivated by the following observation.
\begin{theorem}\label{harmonicenergy}
Let $s$ be a twistor line of ${\mathcal M}_{DH}(\Sigma,\SL(2,\C))\to\mathbb CP^1$ corresponding
to an equivariant harmonic map $f\colon \tilde\Sigma\to H^3 $, where $H^3$ is equipped with its constant sectional curvature $-1$ metric.
Then \[\mathcal E(s)=-\tfrac{1}{4\pi}\text{energy}(f),\]
where $\text{energy}(f)$ is the energy of $f$ on $\Sigma$. In particular, if $s$ is a twistor line, then $\mathcal E(s)\leq 0$.

Likewise, for a $\rho$-negative holomorphic section $s$ of ${\mathcal M}_{DH}(\Sigma,\SL(2,\C))\to\mathbb CP^1$ corresponding to an equivariant harmonic map
to $\SU(2)$ (equipped with its constant sectional curvature $1$ metric) we have
\[\mathcal E(s)=\tfrac{1}{4\pi}\text{energy}(f).\]
\end{theorem}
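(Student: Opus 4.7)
My plan is to evaluate the energy integral on the explicit lifts given in Remark \ref{Rem: harmonicmaps} and then compare with the classical expression for the energy of a harmonic map in terms of its Higgs field.

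First, consider the twistor line case. By Remark \ref{Rem: harmonicmaps}(ii), a twistor line associated with a solution $(\overline\partial, \Phi)$ of the $\SU(2)$-self-duality equations admits the lift
\[
\widehat s(\lambda) = \bigl(\overline\partial + \lambda \Phi^{*_h},\; \lambda \partial_h + \Phi,\; \lambda\bigr),
\]
so that in the notation of \eqref{e1} we have $\Psi = \Phi^{*_h}$. Plugging into the definition of the energy functional gives
\[
\mathcal E(s) \;=\; \frac{1}{2\pi i}\int_\Sigma \mathrm{tr}\bigl(\Phi\wedge \Phi^{*_h}\bigr).
\]
By Proposition \ref{defene}, this value is independent of the choice of lift, so I may work with this particularly nice one.

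Next, I would unpack the integrand in a local holomorphic coordinate $z=x+iy$: writing $\Phi = \phi\, dz$ with $\phi$ an $\sln(E)$-valued smooth function, one has $\Phi^{*_h}=\phi^{*_h}d\bar z$, and therefore
\[
\mathrm{tr}(\Phi\wedge\Phi^{*_h}) \;=\; \mathrm{tr}(\phi\,\phi^{*_h})\, dz\wedge d\bar z \;=\; -2i\,\mathrm{tr}(\phi\,\phi^{*_h})\, dx\wedge dy.
\]
Thus $\mathcal E(s) = -\tfrac{1}{\pi}\int_\Sigma \mathrm{tr}(\phi\,\phi^{*_h})\,dx\wedge dy$.

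The key input from non-abelian Hodge theory is then the identification $df = \Phi+\Phi^{*_h}$ for the equivariant harmonic map $f\colon\tilde\Sigma\to H^3$ associated with the solution $(\overline\partial,\Phi)$, where $\Phi+\Phi^{*_h}$ is viewed as a 1-form with values in the $\mathfrak{p}$-part of the Cartan decomposition $\sln(2,\C)=\su(2)\oplus\mathfrak p$. Under the normalisation of the bilinear form on $\mathfrak{p}$ that gives $H^3$ its constant sectional curvature $-1$ metric (which on $\mathfrak p = i\su(2)$ is a specific positive multiple of the trace form), I would compute the energy density $|df|^2$ of $f$ on $\Sigma$ and obtain
\[
\mathrm{energy}(f) \;=\; \int_\Sigma |df|^2\,\mathrm{vol}_\Sigma \;=\; 4\int_\Sigma \mathrm{tr}(\phi\,\phi^{*_h})\,dx\wedge dy,
\]
where the factor $4$ is pinned down by the normalisation of the metric of curvature $-1$. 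Combining these two identities yields $\mathcal E(s) = -\tfrac{1}{4\pi}\,\mathrm{energy}(f)\leq 0$, as claimed.

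For the second statement, I would apply the same strategy to a $\rho$-negative section, using the lift
\[
\nabla^\lambda \;=\; \lambda^{-1}\Phi + \nabla_h - \lambda\Phi^{*_h}
\]
from Remark \ref{Rem: harmonicmaps}(iii). Now $\Psi = -\Phi^{*_h}$, producing an overall sign change:
\[
\mathcal E(s) \;=\; -\frac{1}{2\pi i}\int_\Sigma \mathrm{tr}(\Phi\wedge \Phi^{*_h}).
\]
The derivative of the harmonic map $f\colon\tilde\Sigma\to\SU(2)$ is again of the form $\Phi-\Phi^{*_h}$ (up to the isomorphism identifying $\mathfrak{su}(2)$ with the tangent space), and the metric of constant curvature $+1$ on $\SU(2)$ is the same positive multiple of the trace form, so the same local computation of $|df|^2$ gives $\mathrm{energy}(f) = 4\int_\Sigma \mathrm{tr}(\phi\,\phi^{*_h})\,dx\wedge dy$. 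Hence $\mathcal E(s) = +\tfrac{1}{4\pi}\,\mathrm{energy}(f)$.

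The main obstacle in this plan is nailing down the numerical constant $4$: this requires carefully matching three conventions — the trace normalisation on $\sln(2,\C)$, the induced metric on $H^3$ (resp. $S^3$) of sectional curvature $-1$ (resp. $+1$), and the normalisation of the harmonic map energy $\tfrac12\int|df|^2$ versus $\int|df|^2$. Everything else (independence of the lift, the two sign choices, and the way $\Phi^{*_h}$ appears in $\Psi$) follows formally, so this is essentially a bookkeeping exercise, but it is the only step where there is genuine room for error.
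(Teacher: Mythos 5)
Your proposal is correct and follows essentially the same route as the paper's own (very brief) proof: take the natural lift of the section provided by Remark \ref{Rem: harmonicmaps}, evaluate the energy functional on it, and identify $\Phi$ with the $(1,0)$-part of $df$ as in Donaldson and Hitchin. The only content beyond the paper's one-line argument is your explicit bookkeeping of the normalisation constants, which checks out (e.g.\ $\mathrm{tr}(\Phi\wedge\Phi^{*_h})=-2i\,\mathrm{tr}(\phi\phi^{*_h})\,dx\wedge dy$ with $\mathrm{tr}(\phi\phi^{*_h})\geq 0$ correctly yields $\mathcal E(s)\leq 0$ on twistor lines and the sign flip in the $\rho$-negative case).
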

\begin{proof}
Take the associated family of flat connections, which provides us with a natural lift of such a ($\tau$- or $\rho$-)real holomorphic section (see Remark \ref{Rem: harmonicmaps}). The theorem then follows by interpreting the Higgs field as the $(1,0)$-part of the differential of the map $f$, see \cite{Donaldson-twisted,Hitchin_1987}.
\end{proof}

\begin{remark}
Analogous formulas hold for the case of equivariant harmonic maps of $\Sigma$ into the anti-de Sitter space $\SL(2,\R)$ and into the de Sitter space $\SL(2,\C)/\SL(2,\R)$.
\end{remark}

\begin{proposition}\label{Prop: SigmaEreal}
Let $\sigma\in\{\rho,\tau\}$ and let $s\colon \C P^1\to\mathcal M_{DH}(\Sigma,\SL(n,\C)$ be an \emph{admissible} holomorphic section. Then we have for the section $\sigma^* s = \sigma\circ s\circ\tilde\sigma$
$$\mathcal E(\sigma^*s) = \overline{\mathcal E(s)}.$$
In particular, if $s$ is $\sigma$-real, then its energy $\mathcal E(s)$ is real.
\end{proposition}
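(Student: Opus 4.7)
The idea is to produce an admissible lift of $\sigma^*s$ near $\lambda=0$ from the admissible lift of $s$, and then compare energies directly via the defining formula \eqref{e1}. Starting with an admissible lift $\widehat s(\lambda)=(\bar\partial+\lambda\Psi,\,\lambda\partial+\Phi,\,\lambda)$ on the $\Sigma$-chart, the gluing relation $(\bar\partial,D,\lambda)\sim(\lambda^{-1}D,\lambda^{-1}\bar\partial,\lambda^{-1})$ yields the corresponding $\overline\Sigma$-chart lift
\[
\widehat s_\infty(\mu)=(\partial+\mu\Phi,\,\mu\bar\partial+\Psi,\,\mu),
\]
valid near $\lambda=\infty$. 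Since $\tilde\sigma$ exchanges $0$ and $\infty$, to compute $(\sigma^*s)(\lambda)=\sigma(s(\tilde\sigma(\lambda)))$ near $\lambda=0$ one evaluates $s$ in the $\overline\Sigma$-chart at parameter $\tilde\sigma(\lambda)^{-1}$ and then applies the $\Sigma\leftrightarrow\overline\Sigma$-symmetric analogue of the involution formula from Definition \ref{Def:AntiHolInv}.

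Carrying out this computation for $\sigma=\tau$ (with $\tilde\tau(\lambda)^{-1}=-\bar\lambda$) and for $\sigma=\rho$ (with $\tilde\rho(\lambda)^{-1}=\bar\lambda$), a direct expansion gives admissible lifts
\[
\widehat{\tau^*s}(\lambda)=(\partial^{*_h}-\lambda\Phi^{*_h},\,\lambda\bar\partial^{*_h}-\Psi^{*_h},\,\lambda),\qquad \widehat{\rho^*s}(\lambda)=(\partial^{*_h}+\lambda\Phi^{*_h},\,\lambda\bar\partial^{*_h}+\Psi^{*_h},\,\lambda),
\]
where $*_h$ denotes pointwise Hermitian conjugation (using the identification $\overline E^*\cong E$ afforded by the reference metric $h$). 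In both cases the Higgs field and $\Psi$-component of the lift satisfy $\Phi_{\sigma^*s}\wedge\Psi_{\sigma^*s}=\Psi^{*_h}\wedge\Phi^{*_h}$, since the two signs cancel in the product.

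The proof then concludes with the pointwise identity $\operatorname{tr}(\Psi^{*_h}\wedge\Phi^{*_h})=-\overline{\operatorname{tr}(\Phi\wedge\Psi)}$, which in a local holomorphic coordinate $z$ is immediate from $(AB)^{*_h}=B^{*_h}A^{*_h}$, $\operatorname{tr}(M^{*_h})=\overline{\operatorname{tr} M}$ and $\overline{dz\wedge d\bar z}=-dz\wedge d\bar z$. Integrating over $\Sigma$ and using $\overline{1/(2\pi i)}=-1/(2\pi i)$ yields $\mathcal E(\sigma^*s)=\overline{\mathcal E(s)}$, and the reality statement for $\sigma$-real sections is then immediate. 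I expect the main technical obstacle to be the chart-change bookkeeping: one must correctly transport $\sigma$ to the $\overline\Sigma$-chart, track how the combined conjugation-dualization operation swaps $(1,0)$- and $(0,1)$-parts between the two charts, and verify that the resulting $\lambda$-connection is again of admissible form on $\Sigma$.
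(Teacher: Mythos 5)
Your proposal is correct and follows essentially the same route as the paper: both use admissibility to write down an explicit lift of $\sigma^*s$ whose Higgs field and $(0,1)$-part are $\pm\Psi^{*_h}$ and $\pm\Phi^{*_h}$, and then conclude via the pointwise identity $\operatorname{tr}(\Psi^{*_h}\wedge\Phi^{*_h})=-\overline{\operatorname{tr}(\Phi\wedge\Psi)}$. (Your signs for the $\tau$- and $\rho$-cases appear swapped relative to the paper's — likely a convention issue about whether $^*$ includes the minus sign of the Cartan involution — but as you note, the two signs cancel in the product, so the argument is unaffected.)
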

The last statement is generalized to arbitrary holomorphic $\tau$-real sections in Section \ref{hyperenergy} (cf. Lemma \ref{Lem:RealityOfResidue}) and Corollary \ref{cor:ResEnergy}). 
\begin{proof}
Since $s$ is admissible, we may find a global lift $\widehat s$ with associated $\C^*$-family $\nabla^\lambda$ of flat connections of the form
\[
\nabla^\lambda = \lambda^{-1}\Phi + \nabla +\lambda\Psi, 
\]
where $\Phi\in\Omega^{1,0}(\Sigma,\sln(E)), \Psi\in\Omega^{0,1}(\Sigma,\sln(E))$. Then $\sigma^*s = \sigma \circ s\circ\tilde\sigma$ has a lift given by 
$$^\sigma\nabla^\lambda = \overline{\nabla^{\tilde\sigma(\lambda)}}^* = -\overline{\tilde\sigma(\lambda^{-1})}\Phi^* + \overline{\nabla}^* - \overline{\sigma(\lambda)}\Psi^* = \begin{cases} \lambda^{-1}\Psi^* + \overline{\nabla}^* +\lambda\Phi^*, & \sigma = \tau\\ 
-\lambda^{-1}\Psi^* + \overline{\nabla}^* -\lambda\Phi^*, & \sigma = \rho
\end{cases}.$$
It follows that 
$$\mathcal E(\sigma^*s) = \frac{1}{2\pi i}\int_\Sigma \mathrm{tr}(\Psi^*\wedge\Phi^*) = -\frac{1}{2\pi i}\int_\Sigma \overline{\mathrm{tr}(\Phi\wedge\Psi)} =  \overline{\left(\frac{1}{2\pi i}\int_\Sigma\mathrm{tr}(\Phi\wedge\Psi)\right)} =  \overline{\mathcal E(s)}.$$
\end{proof}




\subsection{The effect of twisting on the energy}
In this paragraph we investigate how the energy functional behaves under the twisting construction introduced in Section \ref{Subsec:Twisting}.
 
\begin{proposition}\label{twisttheenergy}
Let $s\colon \mathbb CP^1\to\mathcal M_{DH}(\Sigma, \SL(2,\C))$ be an irreducible holomorphic section such that the stable Higgs pairs $s(0) = (\overline{\partial},\Phi^+)$ and $s(\infty) = (\partial, \Phi^-)$ on $\Sigma$ and $\overline{\Sigma}$ have nilpotent Higgs fields. Then the energy of the twisted section $\tilde s$ is given by
\[
\mathcal E(\tilde s) = 2\mathcal E(s) - \deg L,
\]
where $L$ is the kernel bundle of $\Phi^+$.
\end{proposition}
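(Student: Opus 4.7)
The plan is to feed the explicit extending lift of the twisted section $\tilde s$ constructed in the proof of Proposition \ref{twistablepro} directly into the definition of $\mathcal E$, and then to convert the resulting correction term into $\deg L$ via the order-$\lambda^0$ part of the flatness equation. Concretely, start from a local lift $\nabla^\lambda = \lambda^{-1}\Phi^+ + \nabla + \lambda\Psi_1 + O(\lambda^2)$ of $s$ near $\lambda = 0$, choose a smooth splitting $E = L\oplus L^\perp$, and record the block decompositions
\[
\Phi^+ = \begin{pmatrix}0 & \phi \\ 0 & 0\end{pmatrix}, \qquad \nabla = \begin{pmatrix}\nabla^L & \alpha \\ \beta & \nabla^{L^\perp}\end{pmatrix}, \qquad \Psi_1 = (\psi_{ij}),
\]
with $\beta \in \Omega^{1,0}(\mathrm{Hom}(L,L^\perp))$ as in the cited proof. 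The same proof then produces an extending lift of $\tilde s$ at $\lambda = 0$ whose Higgs coefficient and order-$\lambda$ term are
\[
\tilde\Phi = \begin{pmatrix}0 & \phi \\ \beta & 0\end{pmatrix}, \qquad \tilde\Psi = \begin{pmatrix}0 & \alpha \\ \psi_{21} & 0\end{pmatrix}.
\]

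Expanding $\mathrm{tr}(\tilde\Phi\wedge\tilde\Psi)$ and discarding all $(2,0)$-contributions, which vanish on the Riemann surface $\Sigma$, yields
\[
\mathcal E(\tilde s) = \frac{1}{2\pi i}\int_\Sigma \mathrm{tr}(\tilde\Phi\wedge\tilde\Psi) = \frac{1}{2\pi i}\int_\Sigma \bigl(\phi\wedge\psi_{21} + \beta\wedge\alpha\bigr) = \mathcal E(s) + \frac{1}{2\pi i}\int_\Sigma \beta\wedge\alpha,
\]
using $\mathcal E(s) = \frac{1}{2\pi i}\int_\Sigma \phi\wedge\psi_{21}$, which is immediate from the block forms of $\Phi^+$ and $\Psi_1$. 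To rewrite the remaining term, I expand the flatness condition $F^{\nabla^\lambda} = 0$ to order $\lambda^0$, giving $F^\nabla + \Phi^+\wedge\Psi_1 + \Psi_1\wedge\Phi^+ = 0$, and read off its $(1,1)$-block (again the $(2,0)$-terms drop on $\Sigma$):
\[
F^{\nabla^L} + \alpha\wedge\beta + \phi\wedge\psi_{21} = 0.
\]
Integrating over $\Sigma$, substituting $\alpha\wedge\beta = -\beta\wedge\alpha$, and invoking the Chern--Weil identity $\int_\Sigma F^{\nabla^L} = -2\pi i\deg L$ deliver $\frac{1}{2\pi i}\int_\Sigma\beta\wedge\alpha = \mathcal E(s) - \deg L$, whose substitution into the previous display yields exactly $\mathcal E(\tilde s) = 2\mathcal E(s) - \deg L$.

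The main obstacle I anticipate is the flatness-to-Chern--Weil step: one must carefully isolate the $(1,1)$-block of the order-$\lambda^0$ flatness equation so that the curvature of $L$, and hence $\deg L$, enters with the correct sign, all the while keeping track of the types of $\alpha, \beta, \phi, \psi_{ij}$ on the Riemann surface. A minor technical wrinkle is that the gauge $h(\lambda) = \mathrm{diag}(\lambda^{-1},1)$ used in Proposition \ref{twistablepro} is not $\SL(2,\C)$-valued; one can either replace it by the multi-valued but $\SL(2,\C)$-valued $\mathrm{diag}(\lambda^{-1/2},\lambda^{1/2})$, whose central-scalar ambiguity acts trivially on connections, or simply check that a constant scalar gauge factor has no effect on the energy integrand. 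Independence of the final answer from the choice of smooth complement $L^\perp$ is then automatic, since only the intrinsic quantities $\mathcal E(s)$ and $\deg L$ survive.
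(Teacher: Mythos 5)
Your argument is correct and is essentially the paper's own proof: both feed the explicit lift $\tilde\nabla^\lambda=\lambda^{-1}\tilde\Phi+\tilde\nabla+\lambda\tilde\Psi_1+\dots$ from Proposition \ref{twistablepro} into the definition of $\mathcal E$, obtain $\mathcal E(\tilde s)=\mathcal E(s)+\tfrac{1}{2\pi i}\int_\Sigma\beta\wedge\alpha$, and then use the order-$\lambda^0$ flatness identity $F^{\nabla^L}+\phi\wedge\psi_{21}+\alpha\wedge\beta=0$ together with Chern--Weil to conclude. The only (immaterial) difference is that you extract this identity from the flatness of the original family $\nabla^\lambda$ rather than of the gauge-equivalent twisted family $\tilde\nabla^\lambda$, which gives the same block equation.
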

\begin{proof}
By Proposition \ref{twistablepro} we know that $s$ is twistable. 
With the same notation as in the proof of Proposition \ref{twistablepro} we have a lift of the form
\[
\tilde\nabla^\lambda =\nabla^{\lambda^2}.h(\lambda) = \lambda^{-1}\tilde\Phi +\tilde\nabla + \lambda\tilde\Psi_1 + \sum_{k=2}^\infty\lambda^k\tilde\Psi_k,
\]
where 
\[
\tilde\Phi = \left(\begin{array}{cc} 0 & \phi \\ \beta & 0\end{array}\right), \qquad \tilde\nabla = \left(\begin{array}{cc} \nabla^L & 0 \\ 0 & \nabla^{L^\perp}\end{array}\right), \qquad \tilde\Psi_1 = \left(\begin{array}{cc} 0 & \alpha \\ \psi_{21} & 0\end{array}\right).
\]
Let us for convenience relabel  $\psi = \psi_{21}$.
The energy of $s$ is given by 
\[
\mathcal E(s) = \mathbf{E}(\widehat s) =\frac{1}{2\pi i}\int_\Sigma \mathrm{tr}(\Phi\wedge\Psi) = \frac{1}{2\pi i}\int_\Sigma \phi\wedge\psi.
\]
The energy of the twist is 
\[
\mathcal E(\tilde s)= \mathbf{E}(\widehat{\tilde s}) = \frac{1}{2\pi i}\int_{\Sigma}\mathrm{tr}(\tilde\Phi\wedge\tilde\Psi) = \frac{1}{2\pi i}\int_\Sigma (\phi\wedge\psi + \beta\wedge\alpha) = \mathcal E(s) + \frac{1}{2\pi i}\int_{\Sigma}\beta\wedge\alpha.
\]
Since $\tilde\nabla^\lambda$ is flat for all $\lambda\in\C^*$, we see 
\begin{eqnarray*}
0 &=& F^{\tilde\nabla^\lambda} \\
&=&  \lambda^{-1}d^{\tilde\nabla}\tilde\Phi + \left(F^{\tilde\nabla} + [\tilde\Phi\wedge \tilde\Psi_1]\right) + \sum_{k=1}^{\infty}\lambda^k \left(d^{\tilde\nabla}\tilde\Psi_k + [\tilde\Phi\wedge\tilde\Psi_{k+1}] + \frac{1}{2}\sum_{j=1}^k [\tilde\Psi_j,\tilde\Psi_{k-j}]\right)\\
&=& \lambda^{-1}d^{\tilde\nabla}\tilde\Phi + \left(\begin{array}{cc} F^{\nabla^L} + \phi\wedge\psi + \alpha\wedge\beta & * \\ * & *\end{array}\right) + \sum_{k=1}^{\infty}\lambda^k \left(d^{\tilde\nabla}\tilde\Psi_k + [\tilde\Phi\wedge\tilde\Psi_{k+1}] + \frac{1}{2}\sum_{j=1}^k [\tilde\Psi_j,\tilde\Psi_{k-j}]\right).
\end{eqnarray*}
Thus, $F^{\nabla^L} + \phi\wedge\psi + \alpha\wedge\beta = 0$, i.e. 
\[
\beta\wedge\alpha = F^{\nabla^L} + \phi\wedge\psi.
\]
It follows that 
\[
\mathcal E(\tilde s) = \mathcal E(s) + \frac{1}{2\pi i}\int_{\Sigma}\beta\wedge\alpha = \mathcal E(s) + \frac{1}{2\pi i}\int_{\Sigma}(F^{\nabla^L} + \phi\wedge\psi) = 2\mathcal E(s) - \deg L.
\]
\end{proof}

\begin{corollary}\label{positivity}
Consider an irreducible, $\rho$-real, admissible holomorphic section $s\colon \C P^1\to\mathcal M_{DH}(\Sigma,\SL(2,\C))$ given by a family of flat connections 
\[
\nabla^\lambda = \lambda^{-1}\Phi + \nabla - \lambda\Phi^*,
\]
where $0\neq\Phi\in\Omega^{1,0}(\sln(E))$ is nilpotent and $(\nabla,\Phi)$ is an irreducible solution of the harmonic map equations \eqref{eq:hmS3}. Then the energy of the twisted section $\tilde s$ satisfies 
\[
\mathcal E(\tilde s) > |\deg L|>0,
\]
where $L$ denotes the kernel bundle of $\Phi$. In particular, the $\tau$-real section $\tilde s$ cannot be a twistor line.
\end{corollary}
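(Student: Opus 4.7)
The plan is to combine the formula from Proposition \ref{twisttheenergy} with the positivity statement in Theorem \ref{harmonicenergy} together with a stability argument for $L$.

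First I would verify that $s$ is twistable via Proposition \ref{twistablepro}. The Higgs pair $s(0)=(\bar\partial,\Phi)$ has nilpotent Higgs field by assumption. The $\rho$-reality of $s$ combined with the explicit form $\nabla^\lambda = \lambda^{-1}\Phi+\nabla-\lambda\Phi^*$ implies that $s(\infty)$ is isomorphic to the Higgs pair $(\partial,-\Phi^*)$, whose Higgs field is again nilpotent (the adjoint of a nilpotent endomorphism). Hence the hypotheses of Proposition \ref{twisttheenergy} are met and
\[
\mathcal E(\tilde s) = 2\mathcal E(s) - \deg L.
\]

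Next I would establish two separate facts. For the degree: irreducibility of $s$ makes $(\bar\partial,\Phi)$ a stable Higgs pair (cf. Remark \ref{Rem: irred_lambda_conn}). Since $\Phi$ is nilpotent we have $\Phi|_L=0$, so $L$ is a $\Phi$-invariant holomorphic subbundle, and stability forces $\deg L<0$, i.e. $|\deg L|=-\deg L>0$. For the energy of $s$: the explicit form of $\nabla^\lambda$ together with \eqref{eq:hmS3} identifies $s$ as the $\rho$-negative admissible section described in Remark \ref{Rem: harmonicmaps}(iii), corresponding to an equivariant harmonic map $f\colon \tilde\Sigma\to \SU(2)$. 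Theorem \ref{harmonicenergy} then yields $\mathcal E(s)=\tfrac{1}{4\pi}\text{energy}(f)\geq 0$, with strict inequality because $\Phi\neq 0$ forces $f$ to be non-constant (the Higgs field is, up to a sign, the $(1,0)$-part of $df$).

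Combining these ingredients yields
\[
\mathcal E(\tilde s) = 2\mathcal E(s) + |\deg L| > |\deg L| > 0,
\]
which is the asserted inequality. For the final assertion, any twistor line has $\mathcal E\leq 0$ by Theorem \ref{harmonicenergy}, so the strict positivity of $\mathcal E(\tilde s)$ rules out $\tilde s$ being a twistor line.

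I do not anticipate any serious obstacle: the corollary is essentially bookkeeping, combining the twisting formula with the harmonic-map interpretation of the energy. The one point requiring care is the sign convention, namely confirming that the given family $\nabla^\lambda$ makes $s$ a $\rho$-\emph{negative} (rather than $\rho$-positive) section, so that Theorem \ref{harmonicenergy} delivers a \emph{positive} energy for $s$ rather than a negative one; this is immediate by comparing $\nabla^\lambda = \lambda^{-1}\Phi+\nabla-\lambda\Phi^*$ with the normal form in Remark \ref{Rem: harmonicmaps}(iii).
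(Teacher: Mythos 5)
Your proposal is correct and follows essentially the same route as the paper: apply Proposition \ref{twisttheenergy} to get $\mathcal E(\tilde s) = 2\mathcal E(s) - \deg L$, use Theorem \ref{harmonicenergy} to get $\mathcal E(s)>0$ from $\Phi\neq 0$, note $\deg L<0$ by stability, and conclude via the non-positivity of the energy on twistor lines. Your extra care in checking the hypotheses of Proposition \ref{twistablepro} (nilpotency of the Higgs field at $\lambda=\infty$) and in confirming the $\rho$-negative sign convention is sound but only makes explicit what the paper leaves implicit.
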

\begin{proof}
By Theorem \ref{harmonicenergy} we know that $\mathcal E(s)> 0$, since $\Phi\neq 0$.
The energy of the twisted section $\tilde s$ is then (note that $\deg L<0$ by irreducibility)
\[
\mathcal E(\tilde s) = 2\mathcal E(s) -\deg L> |\deg L|>0.
\]
We know from Proposition \ref{prop:twistreal} that $\tilde s$ is $\tau$-real since $s$ is $\rho$-real. But for a twistor line the energy is negative, while we have just seen that $\mathcal E(\tilde s)>0$. Hence $\tilde s$ cannot be a twistor line.
\end{proof}

\begin{remark}
The proof of \cite[Theorem 3.4]{BHR} has two steps: first an irreducible $\rho$-real section $s$ with nilpotent Higgs field corresponding to a solution of the equation \eqref{eq:hmS3} is constructed. Then it is shown that the twist $\tilde s$ is an admissible $\tau$-positive section, and therefore cannot be a twistor line. The above corollary simplifies the second step by showing instead $\mathcal E(\tilde s)>|\mathrm{deg}(L)|>0$ and applying Theorem \ref{harmonicenergy}. 
\end{remark}

\section{Interpretation of the Energy via the hyperholomorphic Line bundle}\label{hyperenergy}
We have defined the energy functional on the space of holomorphic sections of the Deligne--Hitchin moduli space $\mathcal{M}^{irr}_{DH}\to \mathbb CP^1$. In this section we will see that it is in fact a natural functional on the space of holomorphic sections of the twistor space $Z(M)\to \mathbb CP^1$ of any hyper-K\"ahler manifold $M$ with a circle action that induces a standard rotation of the $S^2$ of complex structures. 
The crucial tool is the hyperholomorphic line bundle introduced by Haydys \cite{Haydys} which was given a twistorial description by Hitchin \cite{Hitchin-hkqk}.   

More precisely, let $(M^{4k};g,I_1,I_2,I_3)$ be a hyper-K\"ahler manifold with corresponding K\"ahler forms $\omega_j$, $j=1,2,3$. 
Suppose that $M$ comes equipped with an isometric circle action which preserves $\omega_1$ and rotates $\omega_2$, $\omega_3$, i.e. 
\begin{equation}\label{eq:relations}
\mathcal{L}_X\omega_1=\omega_1, \quad \mathcal{L}_X\omega_2=-\omega_3, \quad \mathcal{L}_X\omega_3=\omega_2,
\end{equation}
where $X$ is the vector field induced by the circle action. 

Let $\mu\colon M\to i\R$ be the moment map corresponding to $\omega_1$. 
Haydys has shown that the two-form\footnote{As usual, we write $d_j^c=I_j\circ d \circ I_j$ for $j=1,2,3$. Also note that both Haydys and Hitchin work with $\R$-valued moment maps explaining our additional factor $i$, for example $i\, d\mu= i_X\omega_1$.} $\omega_1+idd_1^c \mu$ is of type $(1,1)$ with respect to each $I_j$, $j=1,2,3$.
In particular, if $[\omega_1/2\pi]\in H^2(M,\Z)$, then there exists a line bundle $L_M\to M$ such that $c_1(L_M)=[\omega_1/2\pi]$ which carries a unitary connection with curvature equal to $\omega_1+idd_1^c \mu$. Such line bundles are unique up to tensor product with a flat line bundle on $M$ and we fix one in the following. 
By Haydys' result this connection is hyperholomorphic: it induces a holomorphic structure on $L$ with respect to each complex structure in the $S^2$-family of compatible complex structures.
The hyperholomorphic line bundle $L_M$ induces a holomorphic line bundle $L_Z\to Z$ on the twistor space $\pi_Z\colon Z=Z(M)\to \mathbb CP^1$ of $M$, which is trivial on the twistor lines.
\begin{rem}\label{rem:iXomega1sufficient}
	We emphasize that for the existence of $L_M$, and hence $L_Z$, it is sufficient to work with $i_X\omega_1$ instead of the moment map (also see \cite{Korman}).
\end{rem}

\subsection{Meromorphic connections on $L_Z$}
Before we define the energy functional in general, we need some background on meromorphic connections on $L_Z$ complementing the results of \cite{Hitchin-hkqk}. 
The line bundle $L_Z$ corresponds to the Lie algebroid
\begin{equation}\label{eq:liealg}
  \begin{tikzcd}
    0 \ar[r] & \mathcal{O}_Z \ar[r] & TP_Z/\C^* \ar[r] & T_Z \ar[r] & 0
  \end{tikzcd}
\end{equation}
where $P_Z$ is the principal $\C^*$-bundle corresponding to $L_Z$ and $TP_Z/\C^*$ is the vector bundle on $Z$ whose sections correspond to the $\C^*$-invariant vector fields on $P_Z$. 
For later reference we denote the extension class of \eqref{eq:liealg} by $\eta_Z\in H^1(Z,T_Z^*)$. Since $TP_Z/\C^*$ is a Lie algebroid, $\eta_Z$ actually lies in (the image of) $H^1(Z,T_{Z,cl}^*)$ for the closed $1$-forms $T_{Z,cl}^*$. That is, $\eta_Z$ can be represented by a \v{C}ech cocyle with values in the sheaf of closed one-forms.

Hitchin observed that $\eta_Z$ is of a special form if additionally $H^1(M,\C)=0$.
Namely, let $Y\in H^0(Z,T_Z)$ be the holomorphic vector field induced by the circle action lifted to the twistor space $Z$.
After applying M\"obius transformations, we may assume that the circle action satisfies
\begin{equation}
  d\pi_Z(Y)=s:=\pi_Z^*\left(i\lambda \frac{d}{d\lambda}\right)\in H^0(Z,\pi^*\mathcal{O}(2)).
\end{equation}
If $D:=D_0+D_\infty$ denotes the divisor determined by the fibers of $\pi_Z$ over $0$ and $\infty$, then $s$ yields the short exact sequence
\begin{equation}\label{eq:T2}
  \begin{tikzcd}
    0 \ar[r] & T_Z^* \ar[r, "s"] & T_Z^*(2) \ar[r, ""] & T_Z^*(2)_{|D} \ar[r] & 0.
  \end{tikzcd}
\end{equation}
In his twistorial approach to $L_Z$, Hitchin constructed a section $\varphi\in H^0(D,T_Z^*(2)_{|D})$, assuming $H^1(M,\C)=0$, which satisfies
\begin{equation}\label{eq:condphi}
  \delta_Z(\varphi)=\eta_Z, \quad \varphi_{|T_F}=-\tfrac{1}{2}i_Y\omega_{|D}.
\end{equation}
Here $\delta_Z\colon H^0(D,T_Z^*(2)_{|D})\to H^1(Z,T_Z^*)$ is the connecting homomorphism in the long exact sequence associated to \eqref{eq:T2}, $T_F = \ker d\pi_Z$ is the tangent bundle along the fibers of $\pi_Z$ and 
\begin{equation}\label{eq:omega}
\omega=(\omega_2+i\omega_3)+2i\lambda \omega_1+\lambda^2(\omega_2-i\omega_3) \in H^0(Z,\Lambda^2 T_Z^*(2)).
\end{equation}
We next show that $\varphi$ satisfying \eqref{eq:condphi} is essentially unique. 

\begin{prop}\label{p:mero}
  Let $M$ be a connected hyper-K\"ahler manifold with a circle action as before.
  Additionally assume that $[\omega_1/2\pi]\in H^2(M,\Z)$.
  Then the space 
  \begin{equation*}
    \{\varphi \in H^0(D,T_Z^*(2)|_D)~|~\varphi \mbox{ satisfies } \eqref{eq:condphi} \}
  \end{equation*}
  is an affine complex line if non-empty (e.g. if $H^1(M,\C)=0$). 
Each such $\varphi$ determines a \emph{unique} meromorphic connection $\nabla_\varphi$ on $L_Z$ with simple poles along $D$ with
  \begin{equation}\label{eq:res}
    \mathrm{res}_D(\nabla_\varphi)=\varphi \in H^0(D,T_Z^*(2)|_D)
  \end{equation}
 and non-singular otherwise. 
\end{prop}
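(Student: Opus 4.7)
The plan is to handle the two assertions separately: first the affine-line structure of solutions to \eqref{eq:condphi}, and then the existence and uniqueness of the meromorphic connection realizing $\varphi$ as its residue. The key tools throughout are the long exact sequence of \eqref{eq:T2} together with the twisted fiberwise cotangent sequence
\begin{equation*}
0 \to \mathcal{O}_Z \to T_Z^*(2) \to T_F^*(2) \to 0,
\end{equation*}
obtained by tensoring $0 \to \pi_Z^*\mathcal{O}(-2) \to T_Z^* \to T_F^* \to 0$ with $\pi_Z^*\mathcal{O}(2)$.

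For the affine-line claim, given two solutions $\varphi_1, \varphi_2$ of \eqref{eq:condphi}, I would set $\chi := \varphi_1 - \varphi_2$. Then $\delta_Z(\chi)=0$, so by the long exact sequence of \eqref{eq:T2} there exists $\tilde\chi \in H^0(Z, T_Z^*(2))$ with $\tilde\chi|_D = \chi$. The second condition $\chi|_{T_F|_D} = 0$ forces the image of $\tilde\chi$ in $H^0(Z, T_F^*(2))$ to vanish on $D$, hence to lie in $H^0(Z, T_F^*)$ (since sections of $T_F^*(2)$ vanishing on $D$ are sections of $T_F^*$). Leray's spectral sequence for $\pi_Z$ identifies this group with sections of a sheaf on $\mathbb CP^1$ whose fibers are copies of $H^{1,0}(M)$ in the various complex structures of the twistor family, so it vanishes under the hypothesis $H^1(M,\C) = 0$. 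Consequently $\tilde\chi$ itself lies in $H^0(Z, \mathcal{O}_Z) \cong \C$ (by Leray and connectedness of $M$), and we obtain a complex affine line.

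For the existence of $\nabla_\varphi$ I would use that the Atiyah class $\eta_Z \in H^1(Z, T_Z^*)$ is the obstruction to a holomorphic connection on $L_Z$, while its image in $H^1(Z, T_Z^*(2))$ is the corresponding obstruction to a meromorphic connection with simple poles along $D$. The long exact sequence of \eqref{eq:T2} shows that this image vanishes precisely when $\eta_Z$ lies in $\mathrm{image}(\delta_Z)$, which is our hypothesis $\eta_Z = \delta_Z(\varphi)$. Concretely, one chooses a \v{C}ech cover trivialising $L_Z$ with transition data representing $\eta_Z$, picks local lifts $\tilde\varphi_\alpha \in T_Z^*(2)$ of $\varphi$, and uses $\delta_Z(\varphi) = \eta_Z$ to arrange that the $\tilde\varphi_\alpha$'s assemble into a global meromorphic connection on $L_Z$ whose residue is $\varphi$ by construction. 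Uniqueness then reduces to $H^0(Z, T_Z^*) = 0$: taking global sections of $0 \to \pi_Z^*\mathcal{O}(-2) \to T_Z^* \to T_F^* \to 0$, both $H^0(Z, \pi_Z^*\mathcal{O}(-2)) = 0$ (since $H^0(\mathbb CP^1, \mathcal{O}(-2)) = 0$) and $H^0(Z, T_F^*) = 0$ as above.

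The main obstacle is justifying the vanishings $H^0(Z, T_F^*) = 0$ and the consequent dimension computations, which ultimately rest on $H^{1,0}(M) = 0$ (equivalent to $H^1(M,\C) = 0$ for hyper-K\"ahler $M$); relatedly, the passage from the cohomological lifting $\delta_Z(\varphi) = \eta_Z$ to an explicit \v{C}ech-level construction of the meromorphic connection demands care in matching the residue conventions so that the residue of the constructed connection literally equals $\varphi$ rather than a nonzero scalar multiple.
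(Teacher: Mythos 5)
Your overall architecture matches the paper's: the affine-line statement comes from a diagram chase in the long exact sequences attached to the multiplication-by-$s$ sequence \eqref{eq:T2} and the fibrewise cotangent sequence; existence of $\nabla_\varphi$ comes from a \v{C}ech construction using $\delta_Z(\varphi)=\eta_Z$; and uniqueness reduces to $H^0(Z,T_Z^*)=0$. (The paper resolves the residue-normalisation issue you flag by taking the connection forms to be literally $\varphi_U/s$ for local lifts $\varphi_U$ of $\varphi$, so that the residue along $D$ is $\varphi_U|_{D}=\varphi$ on the nose.)

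The genuine gap is in your justification of the key vanishings $H^0(Z,T_F^*)=0$, $H^0(Z,\pi_Z^*\mathcal{O}(-2))=0$ and $H^0(Z,\mathcal{O}_Z)=\C$. You derive the first from a Leray argument identifying the fibres of the direct image of $T_F^*$ with $H^{1,0}(M)$ and invoking ``$H^{1,0}(M)=0$, equivalent to $H^1(M,\C)=0$''. That equivalence fails for noncompact hyper-K\"ahler manifolds (already $M=\C^2$ has $H^1(M,\C)=0$ but carries the holomorphic $1$-forms $dz_1,dz_2$), and the $M$ relevant here --- the moduli space of solutions of the self-duality equations --- is noncompact; the proposition assumes no compactness. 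This step would therefore fail, and it also makes the affine-line claim depend on $H^1(M,\C)=0$, whereas the proposition asserts it whenever the solution set is non-empty, with $H^1(M,\C)=0$ used only to guarantee non-emptiness via Hitchin's construction of $\varphi$. The paper's argument avoids all of this by restricting to twistor lines rather than to fibres: for each $m\in M$ one has $s_m^*T_F^*\cong N_{s_m}^*\cong\mathcal{O}(-1)^{\oplus 2k}$ and $s_m^*\pi_Z^*\mathcal{O}(-2)\cong\mathcal{O}(-2)$, neither of which has global sections, and the twistor lines sweep out $Z$; hence every global section of $T_F^*$, $T_Z^*$ or $\pi_Z^*\mathcal{O}(-2)$ vanishes, and every global function is constant by connectedness of $M$. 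It is the $\mathcal{O}(-1)$-twist in the normal direction, not the topology of $M$, that kills these sections. With that substitution the rest of your argument goes through.
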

The existence of a meromorphic connection $\nabla_\varphi$ with \eqref{eq:res} already appeared in \cite{Hitchin-hkqk} but we include its proof for completeness. 

As a preparation we give the proof of the following well-known lemma. 
\begin{lem}
Let $\pi_Z\colon Z\to \mathbb CP^1$ be the twistor space of a connected hyper-K\"ahler manifold $M$. 
Then $H^0(Z,\Lambda^k T_Z^*)=0=H^0(Z,\Lambda^k T_F^*)$ for all $k\geq 1$ and in particular $H^0(Z,\mathcal{O}_Z)=\C$. 
\end{lem}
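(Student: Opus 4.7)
The plan is to restrict everything to a twistor line $\ell\subset Z$ and use the normal bundle condition $N_\ell\cong\mathcal O(1)^{\oplus 2k}$ (where $\dim_\C M = 2k$; to avoid collision with the index $k$ in the statement, I will use $p$ for the exterior power). Since $\pi_Z\circ \ell = \mathrm{id}_{\mathbb CP^1}$, the fibers of $\pi_Z$ meet $\ell$ transversally, and this yields a canonical holomorphic splitting
\begin{equation*}
T_Z|_\ell \;\cong\; T_\ell \oplus T_F|_\ell \;\cong\; \mathcal O(2)\oplus\mathcal O(1)^{\oplus 2k}, \qquad T_F|_\ell \;\cong\; N_\ell \;\cong\; \mathcal O(1)^{\oplus 2k}.
\end{equation*}
Dualising and taking exterior powers, for every $p\geq 1$ both $\Lambda^p T_Z^*|_\ell$ and $\Lambda^p T_F^*|_\ell$ decompose as direct sums of line bundles of strictly negative degree on $\ell\cong\mathbb CP^1$, because each factor appearing in a wedge monomial contributes at least a copy of $\mathcal O(-1)$.

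First I would deduce from this that $H^0(\ell,\Lambda^p T_Z^*|_\ell)=0=H^0(\ell,\Lambda^p T_F^*|_\ell)$ for every twistor line $\ell$ and every $p\geq 1$. Consequently any global section $\alpha$ of $\Lambda^p T_Z^*$ or $\Lambda^p T_F^*$ restricts to zero on every twistor line. Since the twistor lines sweep out all of $Z$ (in the smooth identification $Z\cong M\times S^2$ they correspond to the slices $\{m\}\times S^2$), the section $\alpha$ vanishes pointwise on $Z$, hence identically.

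For the last assertion, I would apply the case $p=1$ to obtain $H^0(Z,\Omega_Z^1)=H^0(Z,T_Z^*)=0$. Every $f\in H^0(Z,\mathcal O_Z)$ then satisfies $df=0$ and is therefore locally constant. Because $Z$ is diffeomorphic to $M\times S^2$ and $M$ is connected by hypothesis, $Z$ is connected, so $f$ is globally constant and $H^0(Z,\mathcal O_Z)=\C$.

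I do not expect any substantive obstacle. The argument relies only on the defining splitting $T_Z|_\ell\cong T_\ell\oplus T_F|_\ell$ afforded by $\ell$ being a section of $\pi_Z$, on Grothendieck's theorem for $\mathbb CP^1$ ruling out global sections of bundles with only negative summands, and on the covering of $Z$ by twistor lines. The only mildly delicate point is the passage from $p\geq 1$ to $p=0$: one cannot use the pointwise vanishing on a single line for functions (holomorphic functions on $\mathbb CP^1$ are already constant on each line), so one must combine the $p=1$ case with the connectedness of $Z$ rather than argue twistor line by twistor line.
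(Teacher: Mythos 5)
Your proof is correct and follows essentially the same route as the paper: restrict to a twistor line, use the splitting $T_Z|_\ell\cong \mathcal O(2)\oplus N_\ell$ with $N_\ell\cong\mathcal O(1)^{\oplus 2k}$ so that $\Lambda^p T_Z^*|_\ell$ and $\Lambda^p T_F^*|_\ell$ are sums of negative line bundles on $\mathbb CP^1$, and then sweep out $Z$ by twistor lines. Your handling of the $p=0$ case via $df=0$ and connectedness is exactly the intended argument, just spelled out more explicitly than in the paper.
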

\begin{proof}
Let $\alpha\in H^0(Z,T_Z^*)$ and let $s_m\colon \mathbb CP^1\to Z$ be the twistor line determined by $m\in M$.
Then $s_m$ induces the holomorphic splitting
  \begin{equation*}
  s_m^*T_Z^*\cong s_m^*T_F^*\oplus s_m^*(\pi_Z^*\mathcal{O}(-2))\cong N_{s_m}\oplus \mathcal{O}(-2) \cong \mathcal{O}(-1)^{\oplus d} \oplus \mathcal{O}(-2).
  \end{equation*}
  Hence if we restrict $\alpha$ (as a section) to the image $s_m(\mathbb CP^1)\subset Z$, we obtain $\alpha_{|s_m(\mathbb CP^1)}=0$. 
  By varying $m\in M$, we conclude $\alpha=0$. 
  The same argument shows $H^0(Z, \Lambda^k T_Z^*)=0=H^0(Z,\Lambda^k T_F^*)$. 
  Since $M$ is connected, it immediately follows that $H^0(Z,\mathcal{O}_Z)=\C$. 
\end{proof}

\begin{proof}[Proof of Proposition \ref{p:mero}]
  First of all, we consider for each $\mathcal{F}\in \left\{T_Z^*, T_F^*,\pi_Z^*T_{\mathbb CP^1}^*=\pi_Z^*\mathcal{O}(-2)\right\}$ the short exact sequence
  \begin{equation}\label{eq:ShortExactSequenceOfComplexes}
    \begin{tikzcd}
      C^\bullet(\mathcal{F}): & 0 \ar[r] & \mathcal{F} \ar[r, "s"] & \mathcal{F}(2) \ar[r] & \mathcal{F}(2)_{|D} \ar[r] & 0.
    \end{tikzcd}
  \end{equation} 
  These fit into the diagram
  \begin{equation}\label{eq:T3}
    \begin{tikzcd}
      0 \ar[d] \\
      C^\bullet(\pi_Z^*\mathcal{O}(-2)) \ar[d] \\
      C^\bullet(T_Z^*) \ar[d] \\
      C^\bullet(T_F^*) \ar[d] \\
      0
    \end{tikzcd}
  \end{equation}
  with exact rows and columns.
  Next we consider (parts of) the corresponding long exact sequences.
  Since $H^0(Z,T_Z^*)=0=H^0(Z,T_F^*)$ and $H^0(Z,\mathcal{O}_Z)=\C$, we obtain the following diagram\footnote{We drop the spaces from the notation to simplify notation.} 
  \begin{equation}\label{eq:les}
    \begin{tikzcd}
        \C \ar[r, hookrightarrow] \ar[d, hookrightarrow] & H^0(\mathcal{O}_D) \ar[r] \ar[d, hookrightarrow] & H^1(\pi_Z^*\mathcal{O}(-2)) \ar[d, hookrightarrow] \\
        H^0(T_Z^*(2)) \ar[r, hookrightarrow] \ar[d] & H^0(T_Z^*(2)_{|D}) \ar[r, "\delta_Z"] \ar[d, "r"] & H^1(T_Z^*) \ar[d] \\
        H^0(T_F^*(2)) \ar[r, hookrightarrow]  & H^0(T_F^*(2)_{|D}) \ar[r] & H^1(T_F^*) 
    \end{tikzcd}
  \end{equation}
  Now let $\varphi,\varphi'\in H^0(T_Z^*(2)_{|D})$ satisfy \eqref{eq:condphi} and set $c:=\varphi-\varphi'$. 
  By assumption, we have $\delta_Z(c)=0$ and $r(c)=0$. 
  Therefore the exactness of \eqref{eq:les} implies that $c\in H^0(Z,\mathcal O_Z)=\C$. 
  To make this more explicit for later purposes, note that $\mathcal{O}_D$ appearing in \eqref{eq:les} is actually $\pi^*\left(\mathcal{O}(-2)\otimes \mathcal{O}(2)\right)_{|D}$. 
  Using $\gamma:=\pi_Z^*d\lambda\otimes \pi_Z^* \tfrac{\partial}{\partial \lambda}$ as trivializing section\footnote{Note that we pullback $d\lambda$ as a $1$-form but $\tfrac{\partial}{\partial \lambda}$ as a section.}, we have 
  \begin{equation}\label{eq:triv}
   c\mapsto (c\, \gamma_{|D_0}, c \, \gamma_{|D_\infty})\in H^0(D_0, T_Z^*(2)_{|D_0})\oplus H^0(D_\infty, T_Z^*(2)_{|D_\infty})
  \end{equation}
  under the inclusion $\C\hookrightarrow H^0(T_Z^*(2)_{|D})$ in \eqref{eq:les}. 
  In that sense $\varphi\in H^0(Z,T_Z^*(2)_{|_D})$ satisfying \eqref{eq:condphi} is unique up to an additive constant. 
  
For the last statement, assume $\varphi\in H^0(D, T_Z^*(2)|_D)$ with \eqref{eq:condphi} exists. This is the case, for example, if $H^1(M,\C)=0$. 
Then choose an appropriate open covering $\mathcal{U}$ of $Z$ such that $\varphi_{|U\cap D}$ lifts to $\varphi_U\in H^0(U,T_U^*(2))$ for every $U\in \mathcal{U}$.
By \eqref{eq:condphi}, the cocycle
\begin{equation*}
 \eta_{UV}=\tfrac{\varphi_U}{s}-\tfrac{\varphi_V}{s}
\end{equation*}
is a \v{C}ech cocycle representing $\eta_Z = \delta_Z(\varphi)\in H^1(Z,T_Z^*)$. 
But $\eta_Z$ is determined by the line bundle $L_Z$ so that $\eta_{UV}=g_{UV}^{-1} dg_{UV}$ for a \v Cech cocyle $g_{UV}$ defining $L_Z$. 
Altogether  
\begin{equation*}
g_{UV}^{-1}dg_{UV}= \eta_{UV}= \tfrac{\varphi_U}{s} -\tfrac{\varphi_V}{s}
\end{equation*}
so that $\tfrac{\varphi_U}{s}$ are connection $1$-forms of a meromorphic connection $\nabla_\varphi$ on $L_Z$ with the claimed properties. 

 For the uniqueness of $\nabla_\varphi$, let $\nabla_1, \nabla_2$ be two meromorphic connections on $L_Z$ with $\mathrm{res}_D(\nabla_1)=\mathrm{res}_D(\nabla_2)$ and holomorphic otherwise. 
  Then $\nabla:=\nabla_1\otimes \nabla_2^*$ is a holomorphic connection on $L_Z\otimes L_Z^*=\mathcal{O}_Z$.
  Hence $\nabla$ is of the form $d+\alpha$ for a global holomorphic $1$-form $\alpha$ on $Z$ which must vanish. 
  Consequently, $\nabla_1$ and $\nabla_2$ are equal.
\end{proof}

As a next step, we examine how such $\varphi$ interact with the real structure $\tau_Z$.
First observe that for every $\varphi$ satisfying \eqref{eq:condphi}, the section 
\begin{equation*}
\varphi^r:=\tfrac{1}{2}(\varphi+\overline{\tau_Z^*\varphi}) 
\end{equation*}
again satisfies \eqref{eq:condphi} and is further real, i.e. 
\begin{equation}\label{eq:phireal} 
 \tau_Z^*(\varphi^r)=\overline{\varphi^r} \quad \Leftrightarrow \quad\tau_Z^*\varphi^r_0=\overline{\varphi^r_\infty}. 
\end{equation}
Since $\delta_Z$ commutes with $\overline{\tau_Z}^*$, it follows that $\tau_Z^*(\eta_Z)=\overline{\eta}_Z$ and consequently
$
 \tau_Z^*L_Z\cong \overline{L}_Z. 
$
\begin{rem}
Hitchin's sections $\varphi\in H^0(D, T_Z^*(2)|_D)$ satisfying \eqref{eq:condphi} are real by construction. 
\end{rem}
For later reference, we record the following observation.
\begin{cor}\label{cor:real} 
The space 
\begin{equation*}
 \{ \varphi\in H^0(D,T_Z^*(2)|_D)~|~\varphi \mbox{ satisfies } \eqref{eq:condphi} \mbox{ and }\tau_Z^*\varphi=\overline{\varphi} \} 
\end{equation*}
is an affine real line (if non-empty).
\end{cor}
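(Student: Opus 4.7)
The plan is to view the desired space as the fixed locus of an anti-holomorphic involution on the complex affine line
\[
A := \{\varphi \in H^0(D, T_Z^*(2)|_D) \mid \varphi \text{ satisfies } \eqref{eq:condphi}\}
\]
furnished by Proposition \ref{p:mero}, and then invoke the elementary fact that the fixed locus of an anti-holomorphic involution of a one-dimensional complex affine space is either empty or an affine real line.

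First I would set up the involution $\iota \colon A \to A$ by $\iota(\varphi) := \overline{\tau_Z^*\varphi}$. That $\iota$ preserves $A$ follows immediately from the discussion preceding the corollary, where it is shown that $\varphi^r = \tfrac{1}{2}(\varphi + \iota(\varphi))$ again lies in $A$; since $A$ is affine, the affine combination $\iota(\varphi) = 2\varphi^r - \varphi$ lies in $A$ as well. The identity $\iota^2 = \mathrm{id}$ is immediate from $\tau_Z^2 = \mathrm{id}$ and the involutivity of complex conjugation, and $\iota$ is anti-holomorphic with respect to the natural complex structure on $A$ since $\tau_Z$ is anti-holomorphic and complex conjugation is $\C$-anti-linear.

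Next, I would pick a base point $\varphi_0 \in A$ and trivialize $A$ by $\varphi = \varphi_0 + c\gamma$, $c \in \C$, with $\gamma$ the canonical generator described by \eqref{eq:triv}. Then $\iota$ is affine anti-holomorphic in $c$, so it has the form $c \mapsto \mu\bar{c} + \beta$ with $\mu \in \C^*$, $\beta \in \C$, and the involutivity $\iota^2 = \mathrm{id}$ forces $|\mu|=1$ (and $\mu\bar{\beta} + \beta = 0$). Assuming the fixed locus is non-empty, we may take $\varphi_0$ itself to be fixed, giving $\beta = 0$; the fixed-point equation then reduces to $c = \mu \bar{c}$, and writing $\mu = e^{2i\alpha}$ one checks that the solution set is the real line $\R \cdot e^{i\alpha} \subset \C$. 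Translating back to $A$ yields the claimed affine real line.

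The only real obstacle is bookkeeping the compatibility of $\iota$ with the structure of $A$, but the necessary ingredients --- namely $\tau_Z^*\eta_Z = \overline{\eta_Z}$, $\overline{\tau_Z^*\omega} = \omega$, and the compatibility of $Y$ with $\tau_Z$ on $D$ --- are already recorded above. As a byproduct one observes that the real locus is non-empty whenever $A$ itself is non-empty (just take $\varphi^r$ for any $\varphi \in A$), so the hedge ``if non-empty'' in the statement really only allows for the possibility that $A$ is empty, which is ruled out for example under Hitchin's assumption $H^1(M, \C) = 0$.
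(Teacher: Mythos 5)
Your proof is correct and is essentially the paper's argument in slightly more abstract clothing: both rest on Proposition \ref{p:mero} to identify the difference of two solutions of \eqref{eq:condphi} with a constant $c\in\C$, and then use the reality condition to force that constant to be real (the paper computes $c=\overline{c}$ directly, which in your notation just says $\mu=1$, $\beta\in i\R$). Your packaging as the fixed locus of the anti-holomorphic involution $\iota(\varphi)=\overline{\tau_Z^*\varphi}$, and the observation that $\varphi^r$ always furnishes a fixed point, are both consistent with the text preceding the corollary.
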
 
\begin{proof}
By Proposition \ref{p:mero}, if $\varphi, \varphi'$ obey \eqref{eq:condphi}, then $\varphi-\varphi'=c\in H^0(Z, \mathcal{O})=\C$. 
The reality condition implies 
\begin{equation*}
 c=\tau_Z^*(\varphi-\varphi')=\overline{\varphi}-\overline{\varphi'}= \overline{c}. 
\end{equation*}
\end{proof}

\subsection{Residues}
For the next proposition, we assume $H^1(M,\C)=0$ so that real sections $\varphi\in H^0(D,T_Z^*(2))$ as in Corollary \ref{cor:real} and correspondingly meromorphic connections $\nabla_{\varphi}$ on $L_Z$ exist as in Proposition \ref{p:mero}. 
Let $\mathcal{S}$ be the complex-analytic space of holomorphic sections of $\pi_Z\colon Z\to \mathbb CP^1$ and define the function
\begin{equation*}
 \mathrm{res}_\varphi: \mathcal{S}\to \C, \quad \mathrm{res}_\varphi(s):=\mathrm{res}_0(s^*\nabla_\varphi). 
\end{equation*}
This is well-defined because $s^*\varphi_0\in H^0(\{0\}, \mathcal{O})=\C$. 
It is immediate that $\mathrm{res}_\varphi$ is holomorphic in the following sense: 
if $T$ is a complex manifold and $(s_t\colon\mathbb CP^1\to Z)_{t\in T}$ a holomorphic family of sections of $\pi_Z$, then 
\begin{equation*}
T\to \C, \quad t\mapsto \mathrm{res}_\varphi(s_t) 
\end{equation*}
is holomorphic. 
We further observe that $\mathrm{res}_\varphi(s)$ is defined for any local holomorphic section around $0\in \mathbb CP^1$. 

In case $s$ is a real section defined on all of $\mathbb CP^1$, then we obtain the following relation:
\begin{lem}\label{Lem:RealityOfResidue}
	If $s\in \mathcal{S}^{\R}$ is a real holomorphic section of $\pi_Z$, then 
	\begin{equation}\label{eq:ConjugateOfRes}
		\overline{\mathrm{res}_\varphi(s)}=\mathrm{deg}(s^*L_Z)-\mathrm{res}_\varphi(s).
	\end{equation}
	In particular, if $s$ is a real holomorphic section with $\deg(s^*L_Z)=0$, then $\mathrm{res}_\varphi(s)\in i\R$. 
\end{lem}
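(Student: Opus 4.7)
The plan is to combine two facts: (a) the residue theorem on $\mathbb{CP}^1$ applied to the meromorphic connection $s^*\nabla_\varphi$, and (b) the reality of $\nabla_\varphi$ inherited from $\tau_Z^*\varphi=\overline\varphi$ together with the reality of $s$.

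First, since $\nabla_\varphi$ has simple poles only along $D=D_0\cup D_\infty$ and $\pi_Z\circ s=\mathrm{id}_{\mathbb{CP}^1}$, the pull-back $s^*\nabla_\varphi$ is a meromorphic connection on the holomorphic line bundle $s^*L_Z$ over $\mathbb{CP}^1$ with simple poles only at $\lambda=0$ and $\lambda=\infty$. The residue theorem for meromorphic connections on a compact Riemann surface then reads
\[
\mathrm{res}_0(s^*\nabla_\varphi)+\mathrm{res}_\infty(s^*\nabla_\varphi)=\deg(s^*L_Z).
\]

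Next I would exploit reality. From $s\circ\tilde\tau=\tau_Z\circ s$ we get $\tilde\tau^*(s^*\nabla_\varphi)=s^*(\tau_Z^*\nabla_\varphi)$. The condition $\tau_Z^*\varphi=\overline\varphi$ (i.e.\ $\tau_Z^*\varphi_0=\overline{\varphi_\infty}$), together with the uniqueness part of Proposition \ref{p:mero}, identifies $\tau_Z^*\nabla_\varphi$ with the natural conjugate of $\nabla_\varphi$ under $\tau_Z^*L_Z\cong\overline{L_Z}$. In concrete terms, writing the scalar parts of $\varphi$ in the trivialising section $\gamma$ as $\varphi_{|D_0}=c_0\,\gamma_{|D_0}$ and $\varphi_{|D_\infty}=c_\infty\,\gamma_{|D_\infty}$, reality becomes $c_\infty\circ\tau_Z=\overline{c_0}$. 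Using the local formula $\nabla_\varphi=d+\varphi_U/\hat s$ from the proof of Proposition \ref{p:mero} (with $\hat s=\pi_Z^*(i\lambda\,\partial_\lambda)$, which changes sign when passing to the coordinate $\xi=1/\lambda$ near $\infty$), the residues of $s^*\nabla_\varphi$ at $0$ and $\infty$ come out as opposite-signed constant multiples of $c_0(s(0))$ and $c_\infty(s(\infty))$ respectively. Since $s(\infty)=\tau_Z(s(0))$, the reality condition $c_\infty(s(\infty))=\overline{c_0(s(0))}$ then yields
\[
\mathrm{res}_\infty(s^*\nabla_\varphi)=\overline{\mathrm{res}_0(s^*\nabla_\varphi)}.
\]

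Combining the two identities gives
\[
\overline{\mathrm{res}_\varphi(s)}=\mathrm{res}_\infty(s^*\nabla_\varphi)=\deg(s^*L_Z)-\mathrm{res}_\varphi(s),
\]
which is \eqref{eq:ConjugateOfRes}. The ``in particular'' clause follows at once: if $\deg(s^*L_Z)=0$ then $\mathrm{res}_\varphi(s)+\overline{\mathrm{res}_\varphi(s)}=0$, so $\mathrm{res}_\varphi(s)\in i\R$.

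The main obstacle is the careful sign bookkeeping: the anti-holomorphic involution $\tilde\tau$ swaps $0$ and $\infty$; the chart change $\lambda\mapsto\xi=1/\lambda$ used to evaluate the residue at $\infty$ flips the sign of $\hat s$; and the conjugation convention in $\tau_Z^*\varphi=\overline\varphi$ has to be tracked consistently against the trivialisation $\gamma$ that identifies residues with complex numbers. It is the conspiracy of these three signs that produces the \emph{unsigned} relation $\mathrm{res}_\infty=\overline{\mathrm{res}_0}$, and this in turn is what conspires with the residue theorem to yield the $+\deg(s^*L_Z)$ appearing in \eqref{eq:ConjugateOfRes}.
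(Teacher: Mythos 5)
Your proof is correct and follows essentially the same route as the paper: the residue theorem $\mathrm{res}_0(s^*\nabla_\varphi)+\mathrm{res}_\infty(s^*\nabla_\varphi)=\deg(s^*L_Z)$ combined with the reality condition $\overline{\varphi_0}=\tau_Z^*\varphi_\infty$ and the reality of $s$ to get $\mathrm{res}_\infty(s^*\nabla_\varphi)=\overline{\mathrm{res}_0(s^*\nabla_\varphi)}$. The paper's version of the second step is a one-line chain $\overline{s^*\varphi_0}=s^*\tau_Z^*\varphi_\infty=\tau_{\mathbb CP^1}^*s^*\varphi_\infty=\mathrm{res}_\infty(s^*\nabla_\varphi)$, whereas you unpack the same identity via the local trivialisation and sign bookkeeping, but the argument is the same.
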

\begin{proof}
	Since $\overline{\varphi_0}=\tau_Z^*\varphi_\infty$, we obtain by the reality of $s$
	\begin{equation*}
		\overline{\mathrm{res}_\varphi(s)}=\overline{s^*\varphi_0}=s^*\tau_Z^*\varphi_\infty=\tau_{\mathbb CP^1}^*s^*\varphi_\infty=\mathrm{res}_\infty(s^*\nabla_\varphi).
	\end{equation*}
	The last equation uses again the fact that $H^0(\{\infty\},\C)=\C$ canonically so that the pullback along $\tau_{\mathbb CP^1}$ has no effect. 
	By the residue formula for meromorphic connections on line bundles over Riemann surfaces, we have
	\begin{equation}\label{eq:ResidueAt0AndInfty}
	\mathrm{deg}(s^*L_Z)=\mathrm{res}_0(s^*\nabla_\varphi)+\mathrm{res}_\infty(s^*\nabla_\varphi).
	\end{equation}
	Combining the two formulas, we arrive at \eqref{eq:ConjugateOfRes}.
\end{proof}
The previous lemma reflects the fact that $\mathrm{res}_\varphi$ yields a moment map on all connected components of $\mathcal{S}^\R$, see Section \ref{s:EnergyAsMomentMap}.
To show this and the relation of $\mathrm{res}_\varphi$ to the previously defined energy functional, we need an explicit formula for $\mathrm{res}_\varphi$. 
We begin with the following lemma (see \cite[Lemma 8]{Hitchin-hkqk}).
\begin{lem}\label{l:psi}
Let $T_Z^*(2)=T_F^*(2)\oplus \mathcal{O}_Z$ be the $C^\infty$-splitting induced by the $C^\infty$-decomposition $Z=M\times \mathbb CP^1$. 
Define the section $\psi\in \Gamma(Z,T_F^*(2)\oplus \mathcal{O}_Z)$ via 
\begin{equation}\label{eq:psi}
\psi=(\tfrac{\phi}{2}, \mu),\quad \phi=(d_2^c\mu+id_3^c\mu)+2i\lambda d_1^c\mu+\lambda^2(d_2^c\mu- i d_3^c\mu).
\end{equation} 
Then $\psi_{|D}$ is a holomorphic section of $T_Z(2)_{|D}$ and satisfies (\ref{eq:condphi}).
\end{lem}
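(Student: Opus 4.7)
The plan is to verify the two conditions in \eqref{eq:condphi} and the holomorphicity of $\psi|_D$ independently, since the formula for $\psi$ is explicit. The key initial observation is that $d\pi_Z(Y) = s = i\lambda\,d/d\lambda$ vanishes along $D$, so $Y$ is tangent to the fibres of $\pi_Z$ there and restricts to $X$, the vector field generating the hyper-K\"ahler circle action.

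For the normalisation condition $\psi|_{T_F} = -\tfrac{1}{2}i_Y\omega|_D$, I would expand $\omega$ via \eqref{eq:omega} and contract with $X$. The moment map relation $i_X\omega_1 = id\mu$, together with $\mathcal{L}_X(\omega_2 \pm i\omega_3) = \pm i(\omega_2 \pm i\omega_3)$ and the quaternionic relations $I_1I_2 = I_3 = -I_2I_1$, yield via Cartan's formula the fundamental identities
\[
  d_2^c\mu + id_3^c\mu = -i_X(\omega_2+i\omega_3), \qquad d_2^c\mu - id_3^c\mu = i_X(\omega_2-i\omega_3),
\]
together with an expression for $d_1^c\mu$ in terms of the metric. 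Reading off the coefficients in the natural trivialisations of $\mathcal{O}(2)$ at $\lambda = 0$ and $\lambda = \infty$ then matches $\phi/2|_{T_F}$ with $-\tfrac{1}{2}i_Y\omega|_D$ on both components of $D$.

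For holomorphicity, I would restrict separately to $D_0$ and $D_\infty$; at $\lambda = 0$ the $C^\infty$ splitting of $T_Z^*(2)|_{D_0}$ agrees with the holomorphic one, and the fibre component $\tfrac{1}{2}(d_2^c\mu + id_3^c\mu)$ is of type $(1,0)$ with respect to $I_1$ (being a contraction of the $(2,0)$-form $\omega_2+i\omega_3$ with a real vector field) and $\bar\partial_1$-closed by the rotation identity. The cohomological condition $\delta_Z(\psi|_D) = \eta_Z$ I would then treat by Dolbeault representatives: since $\psi$ is a smooth lift of $\psi|_D$ to all of $Z$ and $\bar\partial\psi$ vanishes along $D$ by holomorphicity, the form $\bar\partial\psi/s \in A^{0,1}(Z,T_Z^*)$ is well-defined and represents $\delta_Z(\psi|_D)$, while $\eta_Z$ is represented by the $(0,1)$-part of the hyperholomorphic connection on $L_M$ pulled back to $Z$, whose curvature is $\omega_1 + idd_1^c\mu$ by Haydys \cite{Haydys}. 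The main obstacle is matching these two Dolbeault representatives; this requires a careful analysis of how the twistor complex structure on $Z$ deviates from the product complex structure $I_1 \oplus I_{\mathbb CP^1}$ on $M\times\mathbb CP^1$ away from $D$, a deviation governed by $I_2$ and $I_3$ which, after division by $s$, produces exactly the Haydys curvature term $\omega_1 + idd_1^c\mu$.
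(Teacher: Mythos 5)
Your verification of the normalisation $\psi|_{T_F} = -\tfrac{1}{2}i_Y\omega|_D$ is essentially the paper's own computation (the paper checks only $\lambda=0$ and invokes the reality of $\psi$ for $D_\infty$, but your direct check at both ends via $d_2^c\mu+id_3^c\mu = -i_X(\omega_2+i\omega_3)$ is equivalent). The gap is in the holomorphicity argument. You assert that along $D_0$ the $C^\infty$ splitting $T_Z^*(2)=T_F^*(2)\oplus\mathcal{O}_Z$ agrees with the holomorphic one and then only examine the fibre component $\tfrac{1}{2}(d_2^c\mu+id_3^c\mu)$. But the $\mathcal{O}_Z$-component of $\psi$ is the moment map $\mu$, which is ($i$ times) a real-valued function and certainly not $I_1$-holomorphic; if the splitting were holomorphic along $D_0$, the lemma would be \emph{false}. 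In fact the holomorphic structure of $T_Z^*(2)$ in this splitting has an off-diagonal term, $\bar{\partial}_Z(\alpha,u)=\bigl(\bar{\partial}_\lambda\alpha,\ \bar{\partial}_\lambda u + i\lambda^{-1}\alpha(\bar{\partial}_\lambda X_\lambda^{1,0})\bigr)$ (Hitchin's formula, cited as equation (8) after Lemma 7 of \cite{Hitchin-hkqk}), and the entire content of the holomorphicity claim along $D$ is the identity $\phi\bigl(\bar{\partial}_\lambda X^{1,0}_\lambda\bigr)=2i\lambda\,\bar{\partial}_\lambda\mu$, which makes the second component cancel. This computation (carried out in the paper via $i_X\phi = 2\lambda\, g(X,X)$ and $(i_XF)^{0,1}=i\,\bar{\partial}_\lambda(\mu+i\,g(X,X))$, where $F=\omega_1+i\,dd_1^c\mu$) is entirely absent from your proposal.

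The same mechanism resolves what you explicitly defer as \enquote{the main obstacle} for $\delta_Z(\psi|_D)=\eta_Z$. The paper establishes the single global identity $\bar{\partial}_Z\psi=(\lambda F,0)$, from which both conclusions follow at once: restricting to $D$ gives holomorphicity, and dividing by $s=i\lambda\,\tfrac{d}{d\lambda}$ produces a Dolbeault representative of $\delta_Z(\psi|_D)$ proportional to the Haydys curvature $F$, i.e.\ to $\eta_Z$. Your Dolbeault strategy is the right one, but the \enquote{careful analysis of how the twistor complex structure deviates from the product structure} that you leave open is precisely the off-diagonal term in Hitchin's $\bar{\partial}_Z$ above. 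Without invoking that formula and proving the identity $\phi(\bar{\partial}_\lambda X^{1,0}_\lambda)=2i\lambda\,\bar{\partial}_\lambda\mu$, neither the holomorphicity of the $\mathcal{O}_Z$-component nor the identification of the extension class is established, so the proof is incomplete as it stands.
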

\begin{proof}
Let $X$ be the $C^\infty$-vector field on $M$ induced by the circle action which we identify with a $C^\infty$-vector field on $Z=M\times \mathbb CP^1$. 
We denote by $X^{1,0}_{\lambda}$ the $(1,0)$-part of $X$ with respect to the complex structure $I_\lambda$ on $M$. 
The holomorphic structure $\bar{\partial}_Z$ on $T_Z^*(2)=T^*_F(2)\oplus \pi_Z^*\mathcal{O}_{\mathbb CP^1}$ (with respect to the natural $C^\infty$-splitting) is given by 
\begin{equation}\label{eq:csTZ}
\bar{\partial}_Z(\alpha,u)=\left(\bar{\partial}_\lambda \alpha, \bar{\partial}_\lambda u+i \lambda^{-1}\alpha\left(\bar{\partial}_\lambda X_\lambda^{1,0}\right)\right), 
\end{equation}
cf. equation (8) after Lemma 7 in \cite{Hitchin-hkqk}. 
Here we abuse notation and write $\bar{\partial}_\lambda$ also for the induced complex structure on $\mathcal{O}(2)$-valued one-forms etc.


Let $F = \omega_1 + i dd_1^c\mu$, the curvature of the hyperholomorphic line bundle on $M$.
We first prove  
\begin{equation}\label{eq:ppsi}
\bar{\partial}_Z \psi=(\lambda F,0)
\end{equation}
which implies that $\psi_{|D}$ is holomorphic and satisfies the first equation in (\ref{eq:condphi}), i.e. determines the hyperholomorphic line bundle. 
The first summand in (\ref{eq:ppsi}) is $\bar{\partial}_\lambda (\tfrac{\phi}{2})$. 
Since $\phi$ is of type $(1,0)$ with respect to $\bar{\partial}_\lambda$ for all $\lambda$, it follows that $\bar{\partial}_\lambda \phi=(d\phi)^{1,1}$. 
But $dd_k^c\mu= i\,\omega_k$, $k=2,3$ so that 
\begin{equation}
d\phi= 2i \lambda dd_1^c\mu+ i\, (\omega_2+i\omega_3)+i\, \lambda^2 (\omega_2-i\omega_3).
\end{equation}
Since $\omega =(\omega_2+i\omega_3)+2i\lambda \omega_1+\lambda^2(\omega_2-i\omega_3)$ as in (\ref{eq:omega}) is of type $(2,0)$ with respect to $\bar{\partial}_\lambda$ for every $\lambda$, we conclude 
\begin{equation*}
 \bar{\partial}_\lambda \left(\tfrac{\phi}{2}\right)=\tfrac{1}{2}(d\phi)^{1,1}=i \lambda (dd_1^c\mu -i\, \omega_1)^{1,1}=\lambda F. 
\end{equation*}
For the second summand in (\ref{eq:ppsi}), we have to show 
\begin{equation}\label{eq:pmu}
2i\lambda\bar{\partial}_\lambda\mu=\phi\left(\bar{\partial}_\lambda X^{1,0}_\lambda\right)
\end{equation}
by (\ref{eq:csTZ}). 
Since $\phi$ is of type $(1,0)$ with respect to $\bar{\partial}_\lambda$, we have $i_X\phi=i_{X^{1,0}_\lambda}\phi$ and therefore 
\begin{equation*}
\phi\left(\bar{\partial}_\lambda X^{1,0}_\lambda\right) =\bar{\partial}_\lambda (i_X\phi)+i_{X^{1,0}_\lambda} \bar{\partial}_\lambda \phi=\bar{\partial}_\lambda (i_X\phi)+ 2\lambda (i_XF)^{0,1}
\end{equation*}
from (\ref{eq:ppsi}).
To evaluate $i_X\phi$, observe that 
\begin{equation*}
 i_X d_1^c\mu =-i\, g(X,X),\quad i_Xd_2^c\mu=-i\, \omega_3(X,X)=0, \quad i_Xd_3^c\mu=i\,\omega_2(X,X)=0
\end{equation*}
and hence $i_X\phi=2\lambda g(X,X)$. 
Further $(i_X F)^{0,1}= i\,\bar{\partial}_\lambda (\mu+i \,g(X,X))$ so that
\begin{equation}
\phi(\bar{\partial}_\lambda X^{1,0}_\lambda)= 2i\lambda\, \bar{\partial}_\lambda \mu
\end{equation}
and \eqref{eq:pmu} is proven. 

It remains to prove that $\psi$ satisfies $\psi_{|T_F}=\tfrac{1}{2i } i_Y \omega$ along $D$ (again for $\omega$ as in (\ref{eq:omega})). 
Since $\psi$ is real, it suffices to check this equality at $\lambda=0$: 
\begin{equation*}
\psi_{|T_{D_0}}=\tfrac{\phi}{2}_{|\lambda=0} = \tfrac{1}{2}(d_2^c\mu+i d_3^c\mu) =   -\tfrac{i}{2}(i_{Y} \omega_3 - i \, i_{Y}\omega_2)=-\tfrac{1}{2} i_Y \omega.
\end{equation*}
Here we have used that $\omega_2+i\, \omega_3$ is of type $(2,0)$ along $D_0$ and $X^{1,0}_0=Y$. 
\end{proof}
\begin{rem}\label{rem:localPhi}
	For our considerations in Section \ref{s:EnergyAsMomentMap} we observe that the previous proof works on any open, not necessarily $S^1$-invariant, subset $U\subset D$ with $H^1(U,\R)=0$. 
	 Indeed, under this assumption we always find a Hamilton function $f$ with $df=i_X\omega_1$. 
	 By replacing $\mu$ with $f$, the previous argument provides $\varphi_U\in H^0(T_Z(2)_{|U})$.
	 The condition $H^1(M,\R)=0$ then guarantees that the $\varphi_U$, which differ by a real additive constant (Corollary \ref{cor:real}), glue to a global section. 
\end{rem}
\begin{thm}\label{thm:res}
  Let $Z=Z(M)$ be the twistor space of a connected hyper-K\"ahler manifold $M$ with circle action as before with $[\omega_1/2\pi]\in H^2(M,\Z)$ and $H^1(M,\C)=0$. 
  Let $B\subset \mathbb CP^1$ be an open disk around $0\in \mathbb CP^1$ and $s\colon B\to Z$ a local holomorphic section with $s(0)=m\in M$. 
  Then
  \begin{equation}\label{eq:respsi} 
  	\mathrm{res}_\varphi(s)=\mathrm{res}_0(s^*\nabla_{\varphi})=-\tfrac{1}{2}\, i_Y\omega\left(\dot{s}(0)-\dot{s}_m(0)\right) +\mu(m), 
  \end{equation}
  up to a additive constant where $s_m$ is the twistor line through $m\in M$ and $\mu\colon M\to i\mathbb{R}$ the moment map of the circle action. 
\end{thm}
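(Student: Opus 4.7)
The plan is to reduce the residue computation to the explicit model $\psi = (\phi/2, \mu)$ from Lemma~\ref{l:psi} and then evaluate directly using the $C^\infty$ product structure $Z \cong M \times \mathbb CP^1$. First, by Proposition~\ref{p:mero} together with Corollary~\ref{cor:real}, any $\varphi$ satisfying \eqref{eq:condphi} differs from $\psi|_D$ by an additive complex constant, and such a shift changes $\mathrm{res}_\varphi(s)$ by a universal constant independent of $s$. This accounts for the ``up to an additive constant'' in the statement and allows me to work with $\varphi = \psi|_D$.

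Next I would exploit the standard fact that, in the smooth product $Z \cong M \times \mathbb CP^1$ underlying the twistor construction, the twistor line through $m$ is literally the constant section $s_m(\lambda)=(m,\lambda)$; hence $\dot s_m(0) = \partial_\lambda$ and, writing any local section as $s(\lambda)=(s_M(\lambda),\lambda)$ with $s(0)=m$, the difference $\dot s(0)-\dot s_m(0) = \dot s_M(0)$ lies in $T_mM = T_mD_0$. This geometric identification is what turns the ``$M$-component'' of the derivative of $s$ into the coordinate-free expression appearing in the theorem.

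The actual computation then proceeds by pulling back the local connection $1$-form $\psi/\sigma$ of $\nabla_\psi$ along $s$. In the trivialisation of $\mathcal O(2)$ by $\partial_\lambda$, writing $\psi = \phi/2 + \mu\, d\lambda$ via the smooth splitting $T_Z^*(2) = T_F^*(2) \oplus \mathcal O_Z$, evaluating on $\dot s(\lambda) = \dot s_M(\lambda)+\partial_\lambda$, and dividing by the scalar $\sigma = i\lambda$ yields a meromorphic $1$-form on $B$ whose residue at $\lambda=0$ is, up to the normalisation factor coming from $\sigma$, equal to $\tfrac{1}{2}\phi_0(\dot s_M(0)) + \mu(m)$. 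Applying Lemma~\ref{l:psi} in the form $\tfrac{1}{2}\phi_0 = -\tfrac{1}{2}\, i_Y\omega|_{D_0}$ on $T_F$ and substituting $\dot s_M(0) = \dot s(0)-\dot s_m(0)$ then delivers the desired formula.

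The main obstacle will be the careful bookkeeping of normalisations: the factors arising from $\sigma = \pi_Z^*(i\lambda\,\partial_\lambda)$, the imaginary-valued moment map $\mu \in i\R$, and the fact that the smooth splitting $T_Z^* = T_F^*\oplus \pi_Z^*\mathcal O(-2)$ is not holomorphic must all be tracked simultaneously. The conceptual content, namely the identification of $\dot s(0)-\dot s_m(0)$ with the $M$-component of $\dot s(0)$ in the smooth product, is immediate from the twistor construction of $Z$; the residue calculation is then a routine local computation once these identifications are fixed.
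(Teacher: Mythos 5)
Your proposal is correct and follows essentially the same route as the paper: reduce to $\psi$ from Lemma \ref{l:psi} via the additive-constant argument of Proposition \ref{p:mero} and Corollary \ref{cor:real}, identify $\dot{s}(0)-\dot{s}_m(0)$ with the fibre component of $\dot{s}(0)$ in the smooth product $Z\cong M\times\mathbb{CP}^1$ (which is precisely the paper's comparison of the splittings $\theta_s$ and $\theta_{s_m}$), and then evaluate $\psi=(\tfrac{\phi}{2},\mu)$ at $\lambda=0$. The only cosmetic difference is that you compute the residue of the pulled-back connection $1$-form explicitly rather than reading it off from the dual splitting, and with the paper's normalisation of $\mathrm{res}_0(s^*\nabla_\varphi)=s^*\varphi_0$ the factor from $\pi_Z^*(i\lambda\,\tfrac{d}{d\lambda})$ indeed drops out as you anticipate.
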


\begin{rem}
\begin{enumerate}[(i)]
\item
The right-hand side of (\ref{eq:respsi}) is well-defined because $\dot{s}(0)-\dot{s}_m(0)\in T_{F}$. 
\item 
Since the moment map $\mu\colon M\to i\R$ is only unique up to an additive constant in $i\R$, the freedom in $\varphi$ reflects the freedom in $\mu$.
\end{enumerate}
\end{rem}

\begin{proof}
We first prove the statement about the additive constant.
Let $\varphi, \varphi' \in H^0(D,T_Z^*(2))$ satisfy (\ref{eq:condphi}). 
As we have seen in Corollary \ref{cor:real}, we have $\varphi'= \varphi+ c\, \gamma$ where $c\in \R$ and $\gamma= \pi_Z^*d\lambda\otimes \pi_Z^* \tfrac{\partial}{\partial \lambda}$, cf. \eqref{eq:triv}. 
But $s^*\gamma=d\lambda \otimes \tfrac{\partial}{\partial \lambda}\in H^0(\mathbb CP^1, \mathcal{O}(-2)\otimes \mathcal{O}(2))$, so that
\begin{equation*}
\mathrm{res}_{\varphi'}(s) =s^*(\varphi + c\, \gamma)= \mathrm{res}_{\varphi}(s)+c
\end{equation*}
for any holomorphic section $s$ of $\pi_Z$. 
 
Hence it is sufficient to prove (\ref{eq:respsi}) for the section $\psi\in H^0(D,T_Z^*(2))$ of Lemma \ref{l:psi}. 
Every twistor line $s_m$ induces the same smooth (dual) splitting 
$
\theta_{s_m}\colon s_m^*T_F(2)\oplus \mathcal{O}_{\mathbb CP^1} \to T_Z(2)
$
as for the definition of $\psi$ so that we compute
\begin{equation*}
\mathrm{res}_0(s_m^*\nabla_\psi)=\mu(m). 
\end{equation*} 
Let $s\colon B\to Z$ be any local holomorphic section with $s(0)=m$. 
It defines the splitting $\theta_s\colon s^*T_F(2)\oplus \mathcal{O}_B\to s^*T_Z(2) $. 
Since $s_m(0)=s(0)$ we can compare the splittings at $\lambda=0$ and obtain 
\begin{equation}\label{eq:splittings}
\theta_{s_m}^{-1}\circ \theta_{s}(\tfrac{\partial}{\partial \lambda})=\dot{s}(0)-\dot{s}_m(0)\in T_{D_0}.
\end{equation}
Now $\psi$ is given in the dual splitting $\theta_{s_m}^*$ so that 
\begin{equation*}
\begin{aligned}
(s^*\psi)_0&=\tfrac{1}{2}\, \phi\left(\dot{s}(0)-\dot{s}_m(0)\right)+\mu(m) \\
&= -\tfrac{1}{2}\, i_Y\omega\left(\dot{s}(0)-\dot{s}_m(0)\right)+ \mu(m)
\end{aligned}
\end{equation*}
by Lemma \ref{l:psi} and (\ref{eq:splittings}). 
\end{proof}

Hence the residue $\mathrm{res}_\varphi$ is natural in several ways. 
Not only is it essentially independent of $\varphi$ or the base point (i.e. $0$ or $\infty$) but it is also the analytic continuation of the moment map $\mu\colon M\to i\mathbb{R}$ to the space of all holomorphic sections, where we identify $M$ with space of the twistor lines.

\subsection{Relation to the energy functional} \label{Subsec:RelEnergy}
We next explain the relation between the generalized energy functional $\mathcal{E}$ and the residue(s) $\mathrm{res}_\varphi$ if $M = \mathcal M_{SD}^{irr}(\Sigma,\SU(n))$ is the smooth locus of $\mathcal{M}_{SD}(\Sigma,\SU(n))$, the moduli space of solutions to the self-duality equations with structure group $\SU(n)$. Here we use the notation set up in Section \ref{ss:dh}

Recall that the circle action on $M$ is induced by the circle action on $\mathcal{C}(E)\times \Omega^{1,0}(\Sigma, \mathfrak{sl}(E))$ given by $e^{i\alpha}.(\bar{\partial}, \Phi)=(\bar{\partial}, e^{i\alpha} \Phi)$. The corresponding vector field $X$ at a point $(\bar\partial,\Phi)\in\mathcal C\times\Omega^{1,0}(\Sigma,\sln(E))$ is 
\begin{equation}\label{Eq: VFcircleaction}
X_{(\bar{\partial},\Phi)}= \frac{d}{dt}|_{t=0}(e^{it}.(\bar\partial,\Phi)) = (0, i \Phi). 
\end{equation}
Here we have used the identification $ T_{(\bar\partial,\Phi)}\left(\mathcal{C}(E)\times \Omega^{1,0}(\Sigma, \mathfrak{sl}(E))\right) = \Omega^{0,1}(\Sigma,\sln(E))\oplus \Omega^{1,0}(\Sigma,\sln(E))$. 
The moment map (with respect to $\omega_1$) is given by 
\begin{equation}
 \mu(\bar{\partial},\Phi)=- \int_{\Sigma} \mathrm{tr}(\Phi\wedge \Phi^*),
\end{equation}
as follows easily from the explicit form of the metric 
\begin{equation}
 g((\gamma,\beta),(\gamma,\beta))=2i \int_\Sigma \mathrm{tr}(\gamma^*\wedge \gamma+\beta\wedge \beta^*). 
\end{equation}
Recall moreover that the holomorphic symplectic $2$-form $\omega_{\C} = \omega_2+i\omega_3$ (with respect to $I_1$) is 
\begin{equation}\label{Eq: omegaC}
\omega_{\C}((\gamma_1, \beta_1),(\gamma_2,\beta_2))= 2i \int_\Sigma \mathrm{tr}(\beta_2\wedge \gamma_1 - \beta_1 \wedge \gamma_2),
\end{equation}
where $(\gamma_i,\beta_i)\in\Omega^{0,1}(\Sigma,\sln(E))\oplus\Omega^{1,0}(\Sigma,\sln(E)), \, i=1,2.$
If $(\gamma,\beta)\in \Omega^{0,1}(\Sigma,\sln(E))\oplus \Omega^{1,0}(\Sigma,\sln(E))$ is an arbitrary tangent vector then \eqref{Eq: VFcircleaction} and \eqref{Eq: omegaC} combine to give
\begin{equation}
i_{X_{(\bar\partial,\Phi)}}\omega_\C(\gamma,\beta) = \omega_\C (X_{(\bar\partial,\Phi)},(\gamma,\beta)) = \omega_\C((0, i \Phi),(\gamma,\beta)) =2 \int_\Sigma \mathrm{tr}(\Phi\wedge \gamma). 
\end{equation}
The holomorphic line bundle $L_Z$ on $Z$ is determined as follows (see \cite[\S 3.7]{Hitchin-hkqk} for details): 
by construction, $Z=Z_+\cup Z_-$ where $Z_+\cong \mathcal{M}_{Hod}(\Sigma, SL(n,\C))$ and $Z_-\cong \mathcal{M}_{Hod}(\overline{\Sigma}, SL(n,\C))$. 
On $Z_\pm$ there is a natural determinant line bundle $L_\pm$ with fiber $(\Lambda^{top} \ker (\mathrm{D}))^*\otimes \Lambda^{top}\,\mathrm{coker}(D)$ over the isomorphism class $[(\bar{\partial},\mathrm{D})]\in Z_\pm$ of a $\lambda$-connection $(\bar{\partial},D)$. 
Then $L_Z$ is obtained by gluing $L_+$ and $L_-^*$ over $Z_+\cap Z_-$. 

\begin{cor}\label{cor:ResEnergy}
Let $M=\mathcal{M}_{SD}^{irr}(\Sigma,\SU(n))$, $Z=Z(M)=\mathcal{M}^{s}_{DH}(\Sigma,\SL(n,\C))$ the corresponding twistor space.
Further let $\nabla_{\psi}$ be the meromorphic connection on $L_Z$ uniquely determined by $\psi\in H^0(Z,T_Z^*(2))$ as in (\ref{eq:psi}). 
If $s\colon B\to Z$ is a local holomorphic section around $0\in B$, then 
\begin{equation}
\mathcal{E}(s)=\tfrac{i}{2\pi} \, \mathrm{res}_{\psi}(s) 
\end{equation}
where $\mathcal{E}(s)$ is the energy functional of Section \ref{Sec:defene}. 
\end{cor}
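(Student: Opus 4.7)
The plan is to apply Theorem \ref{thm:res} to the section $\psi$ from Lemma \ref{l:psi} and then unpack the right-hand side by an explicit comparison of lifts. All of the hyper-K\"ahler data required has been spelled out just above: we have $\omega|_{\lambda=0}=\omega_\C$, with the formula for $\omega_\C$ given by \eqref{Eq: omegaC}; the generator of the circle action at a Higgs bundle $m=(\bar\partial,\Phi)$ takes the form $Y_m=(0,i\Phi)$; and the moment map is $\mu(\bar\partial,\Phi)=-\int_\Sigma \mathrm{tr}(\Phi\wedge \Phi^{*_h})$.

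First I would take a local lift $\widehat s(\lambda)=(\bar\partial+\lambda\Psi+\dots,\,\Phi+\lambda\partial+\dots,\lambda)$, the one used to compute $\mathcal E(s)=\tfrac{1}{2\pi i}\int_\Sigma \mathrm{tr}(\Phi\wedge \Psi)$. Since $s(0)$ is stable, non-abelian Hodge theory provides a unique hermitian metric $h$ with $(\bar\partial,\Phi,h)$ solving the self-duality equations; by Remark \ref{Rem: harmonicmaps}(ii) the twistor line $s_m$ through $m=s(0)$ then lifts, after absorbing a constant gauge so that $\widehat s_m(0)=\widehat s(0)$, to
\[
\widehat s_m(\lambda) \,=\, (\bar\partial+\lambda\Phi^{*_h},\,\Phi+\lambda\partial_h,\,\lambda).
\]
With this normalisation the vertical tangent vector $\dot s(0)-\dot s_m(0)\in T_F \cong T_{(\bar\partial,\Phi)}\mathcal M_{\mathrm{Higgs}}$ is represented by the pair of first-order differences $(\gamma,\beta)=(\Psi-\Phi^{*_h},\,\partial-\partial_h)\in \Omega^{0,1}(\sln(E))\oplus \Omega^{1,0}(\sln(E))$.

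A short computation from \eqref{Eq: omegaC} yields $i_{Y_m}\omega_\C(\gamma,\beta)=2\int_\Sigma \mathrm{tr}(\Phi\wedge\gamma)$. Substituting this and the value of $\mu(m)$ into Theorem \ref{thm:res}, the two $\int_\Sigma \mathrm{tr}(\Phi\wedge\Phi^{*_h})$ contributions cancel cleanly and one is left with
\[
\mathrm{res}_\psi(s) \,=\, -\!\int_\Sigma \mathrm{tr}(\Phi\wedge (\Psi-\Phi^{*_h})) \,-\, \int_\Sigma \mathrm{tr}(\Phi\wedge\Phi^{*_h}) \,=\, -\!\int_\Sigma \mathrm{tr}(\Phi\wedge\Psi) \,=\, -2\pi i\,\mathcal E(s),
\]
which is equivalent to the stated formula. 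The main point that needs care is conceptual rather than computational: one has to justify that both lifts can be brought into the standard form above with the same $\lambda$-connection at $\lambda=0$, so that the first-order coefficients genuinely represent the difference $\dot s(0)-\dot s_m(0)$, and one has to check that the additive ambiguity in $\psi$ does not spoil the match. The latter is arranged for free by Lemma \ref{l:psi}, whose normalisation forces $\mathrm{res}_\psi(s_m)=\mu(m)$ on twistor lines; this is precisely what pins down the identity $\mathcal E=\tfrac{i}{2\pi}\mathrm{res}_\psi$ rather than an affine variant of it.
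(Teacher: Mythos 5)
Your proposal is correct and follows essentially the same route as the paper: apply Theorem \ref{thm:res} with the twistor line $s_m$ through $m=s(0)$ given by the harmonic metric, identify $\dot s(0)-\dot s_m(0)$ with $(\Psi-\Phi^{*_h},\,\partial-\partial_h)$, and evaluate $-\tfrac12 i_Y\omega_\C$ and $\mu(m)$ explicitly so that the $\int_\Sigma\mathrm{tr}(\Phi\wedge\Phi^{*_h})$ terms cancel, leaving $\mathrm{res}_\psi(s)=-2\pi i\,\mathcal E(s)$. Your additional remarks on normalising the two lifts to agree at $\lambda=0$ and on the additive ambiguity of $\psi$ being fixed by Lemma \ref{l:psi} are consistent with how the paper handles these points.
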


\begin{proof}
First of all, let $m:=[\bar{\partial},\Phi]:=s(0)$ be the value of $s$ at $0$ and $s_m\colon B\to Z$ the corresponding twistor line through $m$. 
Further let $\widehat{s}(\lambda)=\lambda^{-1} \Phi+ \nabla^0+\lambda \Psi+\dots$ and $\widehat{s}_m(\lambda)=\lambda^{-1}\Phi+\nabla^h+\lambda \Phi^*$ be local lifts of $s$ and $s_m$ respectively. 
In particular, we have
\begin{gather}
\mathcal{E}(s)=\tfrac{1}{2\pi i} \int_{\Sigma} \mathrm{tr}(\Phi\wedge \Psi),\\
\dot{\widehat{s}}(0)-\dot{\widehat{s}}_m(0)=(\Psi-\Phi^*, \partial^{\nabla^0}-\partial^{\nabla^h}).
\end{gather}
Hence Theorem \ref{thm:res} implies 
\begin{equation}
\begin{aligned}
\mathrm{res}_\psi(s)&=-\tfrac{1}{2}\, i_Y\omega\left(\dot{s}(0)-\dot{s}_m(0)\right) +\mu(m) \\
&=- \int_\Sigma \mathrm{tr}\left(\Phi\wedge (\Psi-\Phi^*)\right)-\int_\Sigma \mathrm{tr}(\Phi\wedge \Phi^*) \\
&=- \int_\Sigma \mathrm{tr} \left(\Phi \wedge \Psi\right) \\
&=-2\pi i\, \mathcal{E}(s). 
\end{aligned}
\end{equation}
\end{proof}

\begin{remark}\label{Rem:EnergyAsResidue}
	\begin{enumerate}[(i)]
				\item We have formulated Corollary \ref{cor:ResEnergy} for the $\SL(n,\C)$-case. 
						However, Corollary \ref{cor:ResEnergy} makes sense for any complex reductive group $G_\C$ and the previous proof stills works once we replace $\mathrm{tr}$ by an appropriate non-degenerate invariant form on $\mathfrak{g}_{\C}$. 
						In the semi-simple case we take (an appropriate multiple of) the Killing form.
				\item To our best knowledge, meromorphic connections $\nabla_{\varphi}$ in terms of determinant line bundles have only been given for $M=\mathcal{M}_{SD}^{irr}(\Sigma, \C^*)$ in \cite[Theorem 5.13]{FreixasWentworth} via their theory of intersection connections on Deligne pairings.  
				Our results could be useful to extend \cite[Theorem 5.13]{FreixasWentworth} to higher rank.
	\end{enumerate}
\end{remark}

\subsection{The energy as a moment map}\label{s:EnergyAsMomentMap}
 Let $Z=Z(M)$ be the twistor space of a connected hyper-K\"ahler manifold $M$ with circle action as before with $[\omega_1/2\pi]\in H^2(M,\Z)$ and $H^1(M,\C)=0$. 
 We assume that there exists a component $N$ of real holomorphic sections of $Z\to\C P^1$ which is different from the component $M$ of twistor lines.
 We further assume that the normal bundle for any section $s\in N$ is the direct sum of $\mathcal O(1)\to\C P^1$ and that the twistor construction \cite{HKLR} yields a positive definite Riemannian metric $g^N$ induced by $\omega$.
 This implies that the evaluation map for any $\lambda\in\C P^1$ of real normal sections is a local diffeomorphism. 
 Hence, by \cite{HKLR}, $(N, g^N)$ extends to a hyper-K\"ahler manifold $(N; g^N,I^N_1, I^N_2, I^N_3)$.
 The circle action on the twistor space induces a circle action on $N$. 
Indeed, for $c\in S^1\subset \C$, and the corresponding biholomorphic map $\Phi_c\colon Z\to Z$, we define for a given section 
 $s$ the new section 
 \[
 s_c\colon \C P^1\to Z,\quad  \lambda \mapsto  \Phi_c(s(\tfrac{1}{c}\lambda)).
 \]
 Clearly, $s_c$ is real holomorphic if $s$ is real holomorphic, and because $S^1$ is connected $s$ and $s_c$ are in the same component of real holomorphic sections.
This circle action is again rotating. 

\begin{theorem}\label{the:momentum}
	Let $Z=Z(M)$ be the twistor space of a connected hyper-K\"ahler manifold $M$ with circle action as before with $[\omega_1/2\pi]\in H^2(M,\Z)$ and $H^1(M,\C)=0$. 
	Let $N\subset \mathcal{S}$ be a component of real holomorphic sections of $Z\to\C P^1$ such that the twistor construction of \cite{HKLR} yields a hyper-K\"ahler manifold $(N; g^N, I_1^N, I_2^N,I_3^N)$.
	\\
	Then $N$ has a rotating circle action, and the residue $\mathrm{res}_\psi\colon \mathcal{S}\to \C$ of the natural meromorphic connection $\nabla_\psi$ on $L_Z\to Z$ restricted to $N$ yields a moment map for the circle action with respect to $\omega_1^N$. 
	In particular, $\mathrm{res}_\varphi$ is a K\"ahler potential for $(N,g,I_2^N)$.
\end{theorem}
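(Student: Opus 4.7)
The proof naturally breaks into three parts.

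First, I would verify that the induced circle action $s\mapsto s_c$ on $N$ is rotating. This follows from the construction of the complex structures $I_j^N$ on $N$ via the twistor correspondence of \cite{HKLR}: for each $\lambda_0\in\C P^1$, the evaluation map $\mathrm{ev}_{\lambda_0}\colon N\to Z$ intertwines the corresponding complex structure $I^N_{\lambda_0}$ on $N$ with $I_{\lambda_0}$ on the fiber $Z_{\lambda_0}$. Since $\Phi_c$ covers the rotation $\lambda\mapsto c\lambda$ on $\C P^1$ and restricts on the fixed fiber $Z_0=M$ to the original rotating $S^1$-action, the lifted action on $N$ preserves $I_1^N$ (at the fixed fiber $\lambda=0$) while rotating $I_2^N,I_3^N$ (corresponding to real points of the equator of $\C P^1$).

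Second, to establish the moment map property, I would differentiate the explicit formula of Theorem \ref{thm:res},
\begin{equation*}
\mathrm{res}_\psi(s)=-\tfrac{1}{2}\,i_Y\omega\bigl(\dot{s}(0)-\dot{s}_m(0)\bigr)+\mu(m),
\end{equation*}
along a real normal section $\eta\in T_sN$ with value $\eta(0)\in T_mM$ at $\lambda=0$, where $m=s(0)$. The variation of the $\mu(m)$-term is governed by the existing moment map identity $i\,d\mu=i_X\omega_1$ on $M$, while the variation of the first term can be computed using $\omega|_{\lambda=0}=\omega_{\C}^M$, the defining relation $d\pi_Z(Y)=i\lambda\,\partial/\partial\lambda$, and the fact that $\mathrm{ev}_0$ intertwines the $S^1$-actions on $N$ and $M$. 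A direct calculation along the lines of \cite[\S 3]{Hitchin-hkqk} should then yield
\begin{equation*}
i\,d(\mathrm{res}_\psi|_N)(\eta)=\omega_1^N(X^N,\eta),
\end{equation*}
which is precisely the desired moment map identity on $N$.

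Third, once the moment map property is in hand, the K\"ahler potential statement follows from Haydys' construction \cite{Haydys} applied directly to $N$: on any hyper-K\"ahler manifold with a rotating $S^1$-action and moment map $\mu^N$ for $\omega_1^N$, the identities $\mathcal L_{X^N}\omega_1^N=\omega_1^N$ together with $i\,d\mu^N=i_{X^N}\omega_1^N$ imply after a short computation that $dd^c_{I_2^N}\mu^N$ recovers $\omega_2^N$ up to a sign, so that $\mu^N$ is a K\"ahler potential for $(g^N,I_2^N)$. Applying this with $\mu^N=\mathrm{res}_\psi|_N$ (and noting that any other admissible $\mathrm{res}_\varphi$ differs by a real additive constant by Corollary \ref{cor:real} and thus has identical second derivatives) yields the claim.

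The main technical obstacle lies in step two. Although $\mathrm{res}_\psi$ is a globally defined holomorphic function on $\mathcal S$, the formula of Theorem \ref{thm:res} expresses it in terms of data attached to the base point $m=s(0)\in M$ (namely $\mu(m)$ and the twistor line $s_m$), so one must carefully track how both $m$ and the comparison term $\dot s_m(0)$ vary with $s\in N$, and match the resulting expressions against the hyper-K\"ahler structure on $N$ rather than on $M$.
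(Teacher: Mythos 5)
Your outline is plausible in its first and third parts, but the central step (step two) is precisely where the proof lives, and you have not actually carried it out. The formula of Theorem \ref{thm:res} expresses $\mathrm{res}_\psi(s)$ entirely in terms of data at $\lambda=0$: the value $\mu(m)$ of the moment map on $M$, the fibre component $\dot s(0)-\dot s_m(0)\in T_mM$, and the restriction of $\omega$ to $D_0$, which is $\omega_2+i\omega_3$ on $M$ -- not $\omega_1$. By contrast, the form $\omega_1^N$ on $N$ is extracted as the coefficient of $\lambda$ in the $\mathcal O(2)$-valued pairing $\omega(\eta_1,\eta_2)$ along a section $s\in N$, i.e.\ it is first-order information in $\lambda$ that mixes all three Kähler forms of $N$ and depends on the global holomorphic structure of the normal bundle $\mathcal O(1)^{2k}$, together with the reality constraint. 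Bridging these two -- showing that the $\lambda$-derivative of the fibrewise data at $0$ reassembles into $i_{X^N}\omega_1^N$ -- is the whole content of the theorem, and the ingredients you list ($\omega|_{\lambda=0}=\omega_\C^M$, $d\pi_Z(Y)=i\lambda\,\partial/\partial\lambda$, equivariance of $\mathrm{ev}_0$) do not obviously suffice; you would in addition need to control how the comparison twistor line $s_m$ and its velocity $\dot s_m(0)$ vary as $s$ moves in $N$, which drags in the full hyper-Kähler structure of $M$ near the varying basepoint. Deferring this to ``a direct calculation along the lines of \cite[\S 3]{Hitchin-hkqk}'' leaves the proof with a genuine gap.

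The paper avoids this computation entirely by a different mechanism: for $s\in N$ it chooses neighbourhoods $U\subset N$ and $V\subset M$ with a biholomorphism $\Psi\colon Z(U)\to Z(V)$ of twistor spaces compatible with fibrations, real structures and twisted relative symplectic forms. The sections in $U$ are then \emph{twistor lines} of $Z(U)$, so Theorem \ref{thm:res} applied to the hyper-Kähler manifold $U$ (using Remarks \ref{rem:iXomega1sufficient} and \ref{rem:localPhi} to build $L_{Z(U)}$ and $\nabla_{\varphi_U}$ even though $U$ need not be $S^1$-invariant or satisfy $H^1(U,\C)=0$ globally on $N$) immediately gives $d\,\mathrm{res}_{\varphi_U}=i_{X_N}\omega_1^N$ on $U$. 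The remaining work is bookkeeping: identifying $\Psi^*L_Z\otimes\pi_{Z(N)}^*\mathcal O(-d)$ with $L_{Z(U)}$ and invoking the uniqueness statement of Proposition \ref{p:mero} to conclude $\mathrm{res}_\psi=\mathrm{res}_{\varphi_U}$ up to an additive constant, after which the global definition of $\mathrm{res}_\psi$ glues the local statements. If you want to salvage your approach, you should either execute the variational computation in full or adopt the paper's reduction to Theorem \ref{thm:res} applied to $N$ itself; as written, the assertion $i\,d(\mathrm{res}_\psi|_N)(\eta)=\omega_1^N(X^N,\eta)$ is the theorem, not a step towards it.
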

Note that $H^1(N,\R)$ might not be zero so that general arguments do not even guarantee the existence of a moment map on $N$. 
\begin{proof}
	For every $s\in N$, there exist open neighborhoods $U\subset N$, $V\subset M=Z_0$ of $s$ and $s(0)$ respectively such that there is a biholomorphism
	\begin{equation*}
	\Psi\colon Z(U)\to Z(V)
	\end{equation*}
	of the twistor spaces of $U$ and $V$.
	It is compatible with the fibrations to $\C P^1,$ the real structures and the twisted relative symplectic forms.
	Even though $U$ might not be $S^1$-invariant, there is a holomorphic line bundle $L_{Z(U)}$ -- induced by a hyperholomorphic line bundle $L_U$ over $U$ -- with a meromorphic connection $\nabla_{\varphi_U}$ as before, cf. Remark \ref{rem:iXomega1sufficient} and \ref{rem:localPhi}.
	Theorem \ref{thm:res} implies that $\mathrm{res}_{\varphi_U}\colon U\to i\R$, $s\mapsto \mathrm{res}_0(s^*\nabla_{\varphi_U})$, satisfies $d\,\mathrm{res}_{\varphi_N}=i_{X_N}\omega_1$ where $X_N$ is the vector field of the circle action on $N$. 
	\\
	On the other hand, if $d=\mathrm{deg}(s^*L_Z)$ for any $s\in N$, then 
	\begin{equation}\label{eq:IsomorphismBetweenLZLZN}
	\Psi^*L_Z\otimes \pi_{Z(N)}^*\mathcal{O}(-d)\cong L_{Z(U)}
	\end{equation}
	over $Z(U)$. 
	By Proposition \ref{p:mero}, the meromorphic connections $\nabla_{\varphi_N}$ and $\nabla_\varphi$ (tensored with an appropriate meromorphic connection $\nabla^d$ on $\mathcal{O}(-d)$ if necessary) on $L_{Z(U)}$ and $L_Z$ respectively are exchanged under \eqref{eq:IsomorphismBetweenLZLZN} if the additive constants in the residues $\varphi_U$ and $\psi$ are appropriately chosen. 
	Hence $\mathrm{res}_\psi(s)=\mathrm{res}_{\varphi_U}(s)$ for any $s\in U$ up to an additive constant.  
	Since $\mathrm{res}_\psi$ is globally defined on $N$, it is a moment map for the circle action on $N$. 
	
	The second part is now standard, see \cite[\S 3 (E)]{HKLR}. 
\end{proof}

\section{The energy and the Willmore functional}\label{Sec:Willener}

We have seen in Theorem \ref{harmonicenergy} that for twistor lines the energy $\mathcal E$ is directly related to the harmonic map energy of the corresponding equivariant harmonic map.
In \cite{HH2}, non-admissible $\tau$-negative real holomorphic sections of the rank 2 Deligne--Hitchin moduli spaces have been constructed. These sections
correspond to equivariant Willmore surfaces, for definitions see  Section \ref{sec:Willhigher} below. We will exhibit an explicit formula relating the Willmore energy of the surface with the energy of the corresponding section of $\mathcal M_{DH}\to\C P^1$. 
Before we can state the main results, we need an auxiliary tool: the dual surface construction. In the following sections we restrict to rank 2  Deligne--Hitchin moduli spaces.

\subsection{The dual surface construction}

Consider a holomorphic section $s$ of the Deligne--Hitchin moduli space. We assume that $s(0)$ is a stable Higgs pair with nilpotent Higgs field.
The section $s$
admits  a (local) lift $\nabla^\lambda=\lambda^{-1}\Phi+\nabla+\lambda\Psi +\dots$ such that $\Phi$ is nilpotent.

 By assumption, the kernel bundle $L$ of $\Phi$
has negative degree. Choose a complementary subbundle $L$ and apply the gauge transformation $h(\lambda) = \mathrm{diag}(\lambda^{-1},1)$, written with respect to $L\oplus L^*$ 
to $\nabla^\lambda$, cf. the proof of Proposition \ref{twistablepro}. In this way, we obtain a new $\C^*$-family of flat $\SL(2,\C)$-connections 
\begin{equation}\label{parallelsurface}
\hat\nabla^\lambda=\nabla^\lambda.h(\lambda).
\end{equation}
With respect to $L\oplus L^*$ we may write
\[
\Phi = \left(\begin{array}{cc} 0 & \phi\\ 0 &0\end{array}\right),\qquad \nabla = \left(\begin{array}{cc} \nabla^L & \alpha \\ \beta & \nabla^{L^\perp}\end{array}\right),\qquad \Psi = \left( \begin{array}{cc} \psi_{11} & \psi_{12} \\ \psi_{21} & \psi_{22}\end{array}\right),
\]
with $\phi\in H^0(KL^{-2}), \, \beta\in\Omega^{1,0}(L^2)$. By a computation analogous to the one in the proof of Proposition \ref{twistablepro} we see that
\[
\hat\nabla^\lambda = \nabla^\lambda.h(\lambda) = \Phi + \nabla.h(\lambda) + \lambda\Psi.h(\lambda) + \dots =  \lambda^{-1}\left(\begin{array}{cc} 0 & 0\\ \beta &0\end{array}\right) + \left(\begin{array}{cc} \nabla^L & \phi \\ \psi_{21} & \nabla^{L^\perp}\end{array}\right) +\lambda\left(\begin{array}{cc} \psi_{11} & \alpha \\ * & \psi_{22}\end{array}\right) + \dots
\]
Note that although the corresponding family of $\lambda$-connections has a limit as $\lambda\to 0$,  this family is not the lift of any holomorphic section $\hat s\colon \C\to \mathcal M_{DH}$, as the Higgs pair $(\overline{\partial}^{\hat\nabla},\hat\Phi)$ at $\lambda=0$ is unstable: Indeed, we have
\[
\overline{\partial}^{\hat\nabla} = \left(\begin{array}{cc} \overline{\partial}^L & 0\\ * & \overline{\partial}^{L^\perp}\end{array}\right),\qquad \hat\Phi = \left(\begin{array}{cc} 0 & 0 \\ \beta & 0\end{array}\right).
\]
Thus, the holomorphic subbundle $L^*$ is the kernel bundle of $\hat\Phi$ and has positive degree.
Still, we can interpret $\lambda\mapsto\hat\nabla^\lambda$ as a map $\hat s$ into the space of holomorphic $\lambda$-connections,
and consider its energy $E(\hat s)$ as defined in \eqref{e1}. This is well-defined, and invariant under holomorphic families of gauge transformations $\lambda\mapsto g(\lambda)$ which extend holomorphically to $\lambda=0$ (see the proof of Proposition \ref{defene}).

With 
\[
\hat\Psi = \left(\begin{array}{cc} \psi_{11} & \alpha \\ * & \psi_{22}\end{array}\right)
\]
a computation analogous to the proof of Proposition \ref{twisttheenergy} yields the following formula relating the energy of $\hat s$ to that of $s$.
Note that on the right hand side of the formula $\mathcal E(s)$ appears with a factor 1 as opposed to the formula in Proposition \ref{twisttheenergy}.
\begin{proposition}\label{twisttheenergy2}
Let $s\colon \mathbb CP^1\to\mathcal M_{DH}$ be an irreducible holomorphic section such that the stable Higgs pair $s(0) = (\overline{\partial},\Phi)$  on $\Sigma$ has nilpotent Higgs field. Then we have 
\[
\mathcal E(\hat s) = \mathcal E(s) - \deg L,
\]
where $L$ is the kernel bundle of $\Phi$ and $\hat s$ is determined by \eqref{parallelsurface}.
\end{proposition}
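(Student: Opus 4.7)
The plan is to recycle the computation from the proof of Proposition \ref{twisttheenergy}, adapted to the single-sided gauge $h(\lambda) = \mathrm{diag}(\lambda^{-1},1)$ used here. The geometric reason for the statement is the following: under this gauge (as opposed to the ``symmetric'' square-root gauge appearing in Proposition \ref{twisttheenergy}), the nilpotent entry $\phi$ of $\Phi$ gets shifted from the $\lambda^{-1}$-coefficient up to the $\lambda^{0}$-coefficient of $\hat\nabla^{\lambda}$, while only $\beta$ remains at order $\lambda^{-1}$. Consequently $\mathrm{tr}(\hat\Phi\wedge\hat\Psi)$ contains only a \emph{single} wedge term $\beta\wedge\alpha$ instead of the two terms $\phi\wedge\psi + \beta\wedge\alpha$ appearing in the twisting case. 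This accounts for the coefficient $1$ in front of $\mathcal E(s)$, rather than the coefficient $2$ in Proposition \ref{twisttheenergy}; the $-\deg L$ correction enters identically through the curvature of the kernel subbundle.

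Concretely, I would first read off from the expansion of $\hat\nabla^\lambda$ displayed in the excerpt that
\[
\hat\Phi = \begin{pmatrix} 0 & 0 \\ \beta & 0\end{pmatrix}, \qquad \hat\Psi = \begin{pmatrix} \psi_{11} & \alpha \\ * & \psi_{22}\end{pmatrix},
\]
and observe by a direct block computation that $\mathrm{tr}(\hat\Phi\wedge\hat\Psi) = \beta\wedge\alpha$, whereas $\mathrm{tr}(\Phi\wedge\Psi) = \phi\wedge\psi_{21}$. It therefore suffices to establish the pointwise identity
\[
\beta\wedge\alpha = F^{\nabla^L} + \phi\wedge\psi_{21}
\]
on $\Sigma$, because then integration together with the Chern--Weil normalisation $\tfrac{1}{2\pi i}\int_\Sigma F^{\nabla^L} = -\deg L$ immediately yields $\mathcal E(\hat s) = \mathcal E(s) - \deg L$.

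The displayed identity comes from the $\lambda^{0}$-coefficient of the flatness relation $F^{\nabla^\lambda} = 0$, namely $F^{\nabla} + [\Phi\wedge\Psi] = 0$, read in its $(L,L)$-block. Using the standard Gauss-type decomposition $(F^\nabla)_{LL} = F^{\nabla^L} + \alpha\wedge\beta$ for the connection $\nabla$ written in the smooth splitting $E = L\oplus L^\perp$, together with the fact that $\Phi$ is strictly upper-triangular (so that only the entry $\Phi_{12}\wedge\Psi_{21} = \phi\wedge\psi_{21}$ of $[\Phi\wedge\Psi]$ survives in the $(L,L)$-block), one obtains $F^{\nabla^L} + \alpha\wedge\beta + \phi\wedge\psi_{21} = 0$, and the claim follows upon swapping $\alpha\wedge\beta = -\beta\wedge\alpha$.

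The entire argument is essentially a streamlined version of the one in Proposition \ref{twisttheenergy}, and no new difficulty arises. The only points requiring care are sign conventions: the flip $\beta\wedge\alpha = -\alpha\wedge\beta$ for off-diagonal $1$-form-valued entries, and the Chern--Weil sign $\tfrac{1}{2\pi i}\int_\Sigma F^{\nabla^L} = -\deg L$. A final remark is that $\mathcal E(\hat s)$ is meaningful even though $\hat s(0)$ is an unstable Higgs pair: as explained in the paragraph preceding the statement, the formula \eqref{e1} applied to the family $\hat\nabla^\lambda$ is still well-defined and invariant under gauge transformations that extend holomorphically to $\lambda=0$, by the proof of Proposition \ref{defene}.
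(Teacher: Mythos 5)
Your argument is correct and is exactly the computation the paper has in mind: the paper itself only says the result follows from "a computation analogous to the proof of Proposition \ref{twisttheenergy}", and your write-up is precisely that analogous computation, with the single term $\beta\wedge\alpha$ in $\mathrm{tr}(\hat\Phi\wedge\hat\Psi)$ replacing the two terms $\phi\wedge\psi+\beta\wedge\alpha$ of the twisting case, and the identity $\beta\wedge\alpha=F^{\nabla^L}+\phi\wedge\psi_{21}$ extracted from the $(L,L)$-block of the $\lambda^0$-coefficient of flatness. The sign conventions ($\beta\wedge\alpha=-\alpha\wedge\beta$ and $\tfrac{1}{2\pi i}\int_\Sigma F^{\nabla^L}=-\deg L$) match those used in the proof of Proposition \ref{twisttheenergy}, so nothing is missing.
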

\begin{remark}
Consider an equivariant minimal surface $f\colon \tilde\Sigma\to S^3=\SU(2)$ and the associated family of flat connections 
\[
\nabla^\lambda = \lambda^{-1}\Phi + \nabla - \lambda\Phi^*,
\]
where $(\nabla,\Phi)$ is a solution of \eqref{eq:hmS3}. 
The Higgs field $\Phi$ is nilpotent as the surface is given by a conformal harmonic map, and we can apply the construction \eqref{parallelsurface}. Denote the kernel bundle of $\Phi$ by $L$ and write with respect to $E = L\oplus L^\perp$ 
\[
\nabla^\lambda = \lambda^{-1}\Phi + \nabla - \lambda\Phi^* = \begin{pmatrix}
\nabla^L & \alpha + \lambda^{-1}\phi \\ -\alpha^* - \lambda\phi^* & \nabla^{L^\perp}
\end{pmatrix}.
\]
The dual surface construction then yields the family
\[
\hat\nabla^\lambda = \begin{pmatrix}
\nabla^L & \lambda\alpha + \phi \\ -\lambda^{-1}\alpha^* - \phi^* & \nabla^{L^\perp}
\end{pmatrix},
\]
which satisfies the same reality condition as 
$\nabla^\lambda$, i.e. both are unitary for $\lambda\in S^1$. Moreover $\hat\nabla^\lambda$ has nilpotent Higgs field as well. It therefore gives another conformal harmonic map into $S^3=\SU(2)$ which is branched at the zeros of the Hopf differential of the surface $f$. This construction is well-known in classical surface
theory, and is sometimes called the parallel or dual surface of the initial minimal surface $f$, see \cite{Koba} and the references therein.
\end{remark}

\begin{remark}\label{rem:dualsurfacereality}
Suppose $(\nabla,\Phi)$ is a solution to the self-duality equations with nilpotent Higgs field $\Phi$ corresponding to an equivariant minimal surface $f\colon \tilde\Sigma\to H^3$. Let $L$ again be the kernel bundle of $\Phi$. The associated $\tau$-real family of flat connections on $E = L\oplus L^\perp$ is of the form
\[
\nabla^\lambda = \lambda^{-1}\Phi + \nabla + \lambda\Phi^* = \begin{pmatrix}
\nabla^L & \alpha + \lambda^{-1}\phi \\ -\alpha^* + \lambda\phi^* & \nabla^{L^\perp}
\end{pmatrix}.
\]
The dual surface construction yields the family
\begin{equation}\label{dusufa}
\hat\nabla^\lambda = \begin{pmatrix}
\nabla^L & \lambda\alpha + \phi \\ -\lambda^{-1}\alpha^* + \phi^* & \nabla^{L^\perp}
\end{pmatrix}.
\end{equation}
We observe that 
\[
\overline{\hat\nabla^{\lambda}}^* = \begin{pmatrix}
\overline{\nabla^L}^* & \bar\lambda^{-1}\alpha - \phi \\ -\bar\lambda\alpha^* - \phi^* & \overline{\nabla^{L^\perp}}^*
\end{pmatrix} 
= \begin{pmatrix}
\overline{\nabla^L}^* & -(-\bar\lambda^{-1}\alpha + \phi) \\ -(\bar\lambda\alpha^* + \phi^*) & \overline{\nabla^{L^\perp}}^*
\end{pmatrix}.
\]
Thus, $\hat\nabla^\lambda$ satisfies a different reality condition than $\nabla^\lambda$. In fact, it follows that the family $\hat\nabla^\lambda$ does not give an equivariant harmonic map to $H^3$ but  an equivariant harmonic map  $\tilde \Sigma \to dS^3 = \SL(2,\C)/\SU(1,1)$ into the de Sitter space, see \cite[Section 3]{BHR}.
Because the Higgs field
\[\begin{pmatrix} 0 &0 \\\alpha^*&0\end{pmatrix}\] of the family  $\hat\nabla^\lambda$ is also nilpotent
the corresponding equivariant harmonic map into de Sitter space is conformal as well. 

The de Sitter space $dS^3$ has the identification as the space of oriented  circles on a fixed $2$-sphere. We consider the 2-sphere as the equatorial $2$-sphere $S_\infty$ in the $3$-sphere which separates two hyperbolic $3$-balls.
 The space of oriented circles $C$ in the 2-sphere can be identified with the space of oriented $2$-spheres $S$ in the $3$-sphere which intersect $S_\infty$ perpendicularly, i.e. $C=S\cap S_\infty$ as oriented submanifolds of $S^3$. 
 In this interpretation, the equivariant conformal harmonic map into de Sitter 3-space yields a map into the space of oriented 2-spheres in the 3-sphere. We will see in Section \ref{sec:light} below, that the latter map 
  is the mean curvature sphere of the minimal surface $f$ in $H^3\subset S^3,$ i.e., the map which associates to a point $p$ of the surface the best touching 2-sphere
  of $f$ at $p.$
\end{remark}

Let $s$ be a twistor line given by a nilpotent Higgs pair $s(0),$ and apply the dual surface construction.
From \eqref{dusufa} we can directly compute that $E(\hat s)\geq0$ with equality if and only if $\nabla$ is reducible, i.e. $\alpha =0$.
As an application of Proposition \ref{twisttheenergy2}
we reobtain the well-known energy estimate 
\[
0> \mathcal E(s)\geq \deg L\geq 1-g,
\]
where $g$ is the genus of the surface $\Sigma$.

\subsection{The Willmore functional and the energy of higher sections}\label{sec:Willhigher}
 
 A solution $(\nabla,\Phi)$ of the self-duality equations with nilpotent Higgs field $\Phi\neq0$ gives rise to a branched conformal harmonic map, i.e., an equivariant minimal surface $f\colon \tilde\Sigma\to H^3$ with branch points. The basic invariant of the equivariant minimal surface is the area of a fundamental piece, which is determined by the energy of the harmonic map, i.e.,
\[Area(f)=energy(f)=-4\pi\mathcal E(s),\]
where $s$ is the $\tau$-real holomorphic section of the Deligne--Hitchin moduli space corresponding to the solution $(\nabla,\Phi)$ of the self-duality equations.

The Willmore energy of a conformal immersion $f\colon\tilde\Sigma\to M$ into a Riemannian $3$-manifold $M$ is given by
\[
\mathcal W(f)=\int_\Sigma \left(H^2-K+\bar K \right)dA,
\]
where $dA$ is the induced area form, $K$ is the curvature of the induced metric, $H=\tfrac{1}{2}\text{tr}(II)$ is the mean curvature, i.e., the half-trace of the second fundamental form $II$, and for $p\in \tilde\Sigma$ the quantity $\bar K_p$ is the sectional curvature of the tangent plane 
$T_{f(p)}f(\tilde\Sigma)\subset T_{f(p)}M$. 
It was known already to Blaschke that the Willmore functional for surfaces in $\R^3$ or $S^3$ is invariant under M\"obius transformations of the target space.
It was first shown in \cite{chen} that the Willmore integrand is actually invariant under conformal changes of the metric on $M$.

In the case of an equivariant,  immersed minimal surface $f\colon\tilde\Sigma\to H^3$ into hyperbolic 3-space, $H=0$ and the Willmore functional therefore equals to
\[
\mathcal W(f)=-\int_\Sigma KdA-\int_\Sigma dA=2\pi(2g-2)+4\pi\mathcal E(s).
\]
We apply the dual surface construction  \eqref{parallelsurface}
 to the corresponding holomorphic section $s$ with nilpotent Higgs field $\Phi$.  We obtain a new family of $\lambda$-connections $\hat s$. Let $L$ be the kernel bundle of  $\Phi$. Because $\Phi$ can be interpreted as the $(1,0)$-part of the differential of the minimal surface,
  the zeroes of the Higgs field are branch points of $f$. Since we assume that $f$ is not branched, we must have $\mathrm{deg}(L) =1-g$. Hence, Proposition \ref{twisttheenergy2} implies
\begin{equation}\label{eq:willmoreenergy}
\mathcal W(f)=4\pi\, E(\hat s).
\end{equation}

The conformally invariant Willmore integrand $\left(H^2-K+\bar K \right)dA$ can be generalized to a class of branched conformal maps into the conformal 3-sphere. The extra assumption is that the mean curvature sphere (the exact definition is given in Section \ref{sec:light} below) extends through the branch points of the conformal map,  see \cite{BFLPP} and related literature. We will see in Section \ref{sec:light}  that this assumption holds for branched minimal surfaces in hyperbolic 3-space, yielding \eqref{eq:willmoreenergy} in this more general situation. In fact, we obtain an equality for the Willmore integrand:
\begin{proposition}\label{prowiin}
Let $f\colon \Sigma\to H^3$ be an equivariant branched minimal surface with
associated $\C^*$-family of flat connections $\nabla^\lambda$. Then the conformally invariant Willmore integrand is given by
\[
-2i\,\mathrm{tr}(\hat\Phi\wedge\hat\Psi),
\]
where $\hat\nabla^\lambda:=\nabla^\lambda.h(\lambda)=\hat\nabla+\lambda^{-1}\hat\Phi+\lambda\hat \Psi$ is given by the dual surface construction. 
In particular 
\[
\mathcal W(f)=4\pi E(\hat s),
\]
where $\hat s$ is the family of $\lambda$-connections determined by $\hat\nabla^\lambda$.
\end{proposition}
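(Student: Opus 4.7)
The plan is to verify the pointwise identity $(H^2-K+\bar K)\,dA = -2i\,\mathrm{tr}(\hat\Phi\wedge\hat\Psi)$ away from the (isolated) branch locus of $f$ by a local computation in a frame adapted to the splitting $E=L\oplus L^\perp$, to extend the identity by continuity across branch points, and then to obtain the integrated assertion.

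First, from the explicit expression \eqref{dusufa} for $\hat\nabla^\lambda$, reading off the $\lambda^{-1}$- and $\lambda$-coefficients with respect to the splitting $E=L\oplus L^\perp$ gives
\[
\hat\Phi = \begin{pmatrix} 0 & 0\\ -\alpha^* & 0\end{pmatrix},\qquad
\hat\Psi = \begin{pmatrix} 0 & \alpha\\ 0 & 0\end{pmatrix},
\]
and therefore $\mathrm{tr}(\hat\Phi\wedge\hat\Psi) = -\alpha^*\wedge\alpha$. Away from the branch locus I pick a local conformal coordinate $z$ and a unitary frame adapted to the splitting; since $\alpha$ is of type $(0,1)$, one may write $\alpha = \alpha_0\, d\bar z$ for a local complex function $\alpha_0$, so that $-2i\,\mathrm{tr}(\hat\Phi\wedge\hat\Psi)=4|\alpha_0|^2\, dx\wedge dy$. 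In particular, the right-hand side is a smooth, real, non-negative $(1,1)$-form on all of $\Sigma$.

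The main obstacle is the geometric identification of $\alpha$ in terms of the second fundamental form $II$ of $f$. Under the non-abelian Hodge correspondence, $\Phi$ is (up to a standard normalisation) the $(1,0)$-part of $df$, so $L=\ker \Phi$ is the smoothly varying normal line bundle of $f$ in $H^3$, and $L^\perp$ is the tangent line bundle. The off-diagonal term $\alpha$ of the Chern connection then measures the coupling between the tangent and normal directions, i.e.\ the shape operator of $f$. A careful matching of the conformal factor $e^{2u}$ in the induced metric $ds^2=e^{2u}|dz|^2$ with the hermitian metrics on $L$ and $L^\perp$ should yield
\[
4|\alpha_0|^2\, dx\wedge dy = \tfrac{1}{2}|II|^2\, dA,
\]
where $dA$ is the induced area form. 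This bookkeeping step is where all the normalisations from Section \ref{ss:dh} must be matched with classical surface theory and is the only nontrivial computation in the proof.

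Granted the identification, the rest is straightforward. Since $f$ is minimal and the target has constant sectional curvature $-1$, the Gauss equation together with $H=\tfrac12\mathrm{tr}\, II=0$ gives $K=-1-\tfrac12|II|^2$, whence
\[
(H^2-K+\bar K)\, dA = \tfrac12|II|^2\, dA = -2i\,\mathrm{tr}(\hat\Phi\wedge\hat\Psi)
\]
away from the branch points. Both sides extend smoothly across the isolated branch points: the right-hand side by construction of $\hat\nabla^\lambda$, and the left-hand side because the mean curvature sphere of a minimal surface in $H^3$ extends smoothly through branch points, as will be shown in Section \ref{sec:light}. Hence the pointwise identity holds on all of $\Sigma$. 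Integrating and applying the definition \eqref{e1} of the energy yields
\[
\mathcal W(f) = \int_\Sigma -2i\,\mathrm{tr}(\hat\Phi\wedge\hat\Psi) = -2i\cdot 2\pi i\cdot E(\hat s) = 4\pi\, E(\hat s),
\]
completing the proof.
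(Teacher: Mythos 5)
Your reduction of the right-hand side is correct and consistent with the paper: from \eqref{dusufa} one does get $\hat\Phi=\left(\begin{smallmatrix}0&0\\-\alpha^*&0\end{smallmatrix}\right)$, $\hat\Psi=\left(\begin{smallmatrix}0&\alpha\\0&0\end{smallmatrix}\right)$, hence $\mathrm{tr}(\hat\Phi\wedge\hat\Psi)=-\alpha^*\wedge\alpha$, and the Gauss-equation identity $(H^2-K+\bar K)\,dA=\tfrac12|II|^2\,dA$ for a minimal surface in $H^3$ is also right, as is the final integration giving $4\pi E(\hat s)$. The problem is that the one step you flag as ``bookkeeping'' --- the identity $4|\alpha_0|^2\,dx\wedge dy=\tfrac12|II|^2\,dA$ --- is in fact the entire content of the proposition, and you assert it (``should yield'') rather than prove it. Your heuristic for it is also not quite right as stated: $L=\ker\Phi$ is a line subbundle of the rank-$2$ bundle $E$, not the normal bundle of $f$ inside $f^*TH^3$, so the claim that $\alpha$ ``is'' the shape operator requires an explicit frame computation linking the non-abelian Hodge data to classical surface theory. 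Concretely, one must put $\nabla^\lambda$ into the normalized $\SU(2)$-form \eqref{eq:omega-pm}, where $\alpha=-e^{-u}\bar q\,d\bar z$ with $q\,dz^2$ the Hopf differential and $e^{2u}|dz|^2$ the induced metric; only then does $|II|^2=8e^{-4u}|q|^2$ give $4|\alpha_0|^2\,dx\wedge dy=4e^{-2u}|q|^2\,dx\wedge dy=\tfrac12|II|^2\,dA$. Without identifying $\alpha_0$ with $-e^{-u}\bar q$ the constant $\tfrac12$ versus, say, $\tfrac14$ or $2$ cannot be checked, and that constant is exactly what the proposition asserts.

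The paper closes this gap by a different route: it never invokes the Gauss equation, but works in the lightcone model, constructs the mean curvature sphere congruence $\mathcal S$ from a parallel frame $F^\lambda$ of $\nabla^\lambda$, computes the associated $\SO(5,\C)$-family $\mathcal D^\lambda=\mathcal D_{\mathcal S}+\lambda^{-1}\beta^{(1,0)}+\lambda\beta^{(0,1)}$ explicitly in Lemma \ref{con1SN}, and reads off $-\tfrac14\mathrm{tr}(*\beta\wedge\beta)=2iq\bar q e^{-2u}\,dz\wedge d\bar z=-2i\,\mathrm{tr}(\hat\Phi\wedge\hat\Psi)$ directly. This has the additional advantage that the \emph{conformally invariant} Willmore integrand (in the sense of \cite{BFLPP}, which is what the statement refers to for a branched surface) is defined through $\mathcal S$ and extends through the branch points by construction, so no separate continuity argument at the branch locus is needed; your appeal to the Gauss equation only makes sense at immersed points and then requires knowing that the conformally invariant integrand restricts to $(H^2-K+\bar K)\,dA$ there. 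Your route can be completed, but as written the decisive computation is missing, not merely deferred.
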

A proof  is given in Section \ref{sec:light} below using notions from conformal surface geometry.

In \cite{HH2} it was shown that there exist compact Riemann surfaces $\Sigma$ whose associated Deligne--Hitchin moduli spaces admit $\tau$-negative holomorphic sections $s$ with the following properties:
\begin{enumerate}
\item the Higgs field $\Phi$ is nilpotent, where $s(0)=[\bar\partial,\Phi]$;
\item the section $s$ is not admissible: for a lift $\nabla^\lambda$ with $\overline{\nabla^{\bar\lambda^{-1}}}=\nabla^\lambda.g(\lambda)$
the Birkhoff factorization $g=g_+g_-$  fails along a real analytic (not necessarily connected)  curve $\gamma\subset \Sigma$ (see Remark \ref{rem:lift-section-admissible});
\item On $\Sigma\setminus\gamma$ the section $s$ gives rise to an (equivariant) conformal harmonic map which extends through the boundary 2-sphere at infinity of the hyperbolic 3-space, yielding a M\"obius equivariant Willmore surface $f\colon \tilde\Sigma\to S^3=H^3\cup S^2\cup H^3$.
\end{enumerate}
It is a natural guess that the energy $\mathcal E(s)$ is related to the Willmore energy of a fundamental piece of $f$.
We remark that \cite[Theorem 9]{BabBob} can be interpreted as this relation in the case of $\Sigma$ being of genus 1. Our main result here is the following theorem, whose proof we postpone to section \ref{sec:proofs}.

\begin{theorem}\label{willmoreenergy}
Let $\Sigma$ be a compact Riemann surface. 
Let $s$ be a $\tau$-negative holomorphic section of the Deligne--Hitchin moduli space $\mathcal M_{DH}(\Sigma,\SL(2,\C)$ satisfying the above mentioned properties (1)-(3). Let $L$ be the kernel bundle of the nilpotent Higgs field of the Higgs pair $s(0) = [\bar\partial,\Phi]$.
Then the energy of $s$ is related to the Willmore energy of a fundamental piece of the surface $f$ by the formula
\[
\mathcal E(s)=\tfrac{1}{4\pi}\mathcal W(f)+\mathrm{deg}(L).
\]
\end{theorem}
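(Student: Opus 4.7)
The strategy is to reduce the statement to Proposition \ref{twisttheenergy2} and Proposition \ref{prowiin} by applying the dual surface construction globally on $\Sigma$ and then comparing integrands on the open dense subset $\Sigma\setminus\gamma$.

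First I would construct the dual family $\hat s$ on all of $\Sigma$. Since the Higgs pair $s(0)=[\bar\partial,\Phi]$ is stable and $\Phi$ is nilpotent, its kernel bundle $L$ is a holomorphic line subbundle of negative degree. Picking any smooth complementary subbundle $L^\perp$ and forming $\hat\nabla^\lambda=\nabla^\lambda.h(\lambda)$ with $h(\lambda)=\mathrm{diag}(\lambda^{-1},1)$ for an arbitrary local lift $\nabla^\lambda$ of $s$ around $0\in \C P^1$ gives a well-defined $\lambda$-connection family $\hat s$ as in the discussion preceding Proposition \ref{twisttheenergy2}. This construction is purely algebraic in the (smooth) objects coming from $s$, so it produces a smooth section $\hat s$ on all of $\Sigma$. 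Proposition \ref{twisttheenergy2} then immediately gives
\[
\mathcal E(\hat s)=\mathcal E(s)-\deg L.
\]

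Next I would restrict attention to the open dense subset $\Sigma\setminus\gamma$. By assumptions (2)--(3), on each connected component $U$ of $\Sigma\setminus\gamma$ the lift of $s$ is gauge-equivalent to the family of flat connections of a solution of the self-duality equations whose associated equivariant conformal harmonic map is the restriction of the Willmore surface $f$ to one of the two hyperbolic balls bounded by $S^2_\infty\subset S^3$. Since $\Phi$ is nilpotent, this restriction is an equivariant branched minimal immersion into $H^3$, so Proposition \ref{prowiin} applies on each such $U$ and yields the pointwise identity
\[
(H^2-K+\bar K)\,dA\bigr|_U \;=\; -2i\,\mathrm{tr}(\hat\Phi\wedge\hat\Psi)\bigr|_U,
\]
where the left-hand side is the Möbius-invariant Willmore integrand of $f$ computed in the ambient metric on $S^3$ (which agrees with the corresponding expression in the hyperbolic metric on the relevant ball by conformal invariance of the integrand \cite{chen}). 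The identity is insensitive to the specific self-duality representative chosen on $U$, since a different choice differs by a gauge transformation producing a cohomologous lift of the same $\hat s$.

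Since $\gamma\subset\Sigma$ is a real-analytic curve of measure zero, while both the Willmore integrand of $f$ and the $(1,1)$-form $-2i\,\mathrm{tr}(\hat\Phi\wedge\hat\Psi)$ are globally defined and locally integrable on the compact surface $\Sigma$, integration of the pointwise identity on $\Sigma\setminus\gamma$ yields
\[
\mathcal W(f)=-2i\int_\Sigma \mathrm{tr}(\hat\Phi\wedge\hat\Psi)=4\pi\,\mathcal E(\hat s)=4\pi\bigl(\mathcal E(s)-\deg L\bigr),
\]
which is equivalent to the desired formula.

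The main obstacle is the middle step: verifying that the gauge-theoretic $(1,1)$-form $-2i\,\mathrm{tr}(\hat\Phi\wedge\hat\Psi)$ really computes the conformally invariant Willmore integrand of $f$ on \emph{both} sides of the sphere at infinity. The point is that on each side one has a \emph{different} hyperbolic realisation of the same Möbius Willmore surface $f$, so one must check that the dual surface construction applied to the single holomorphic section $s$ is compatible with these two distinct minimal surface pictures -- but this is precisely what Proposition \ref{prowiin} ensures on each component of $\Sigma\setminus\gamma$ separately.
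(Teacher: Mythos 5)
Your overall strategy coincides with the paper's: perform the dual surface construction globally, use Proposition \ref{twisttheenergy2} to reduce the claim to $\mathcal W(f)=4\pi\,E(\hat s)$, and feed in Proposition \ref{prowiin} on $\Sigma\setminus\gamma$. However, there is a genuine gap at the middle step. Proposition \ref{prowiin} identifies the Willmore integrand with $-2i\,\mathrm{tr}(\hat\Phi\wedge\hat\Psi)$ \emph{computed from the lift in self-duality form}, which exists only on $\Sigma\setminus\gamma$ because $s$ is not admissible. The quantity whose integral computes $E(\hat s)$ is instead $-2i\,\mathrm{tr}(\hat\Phi\wedge\hat\Psi)$ for the \emph{global} lift $\hat\nabla^\lambda=d+\hat\eta^\lambda$. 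These two lifts differ by the $\lambda$-dependent gauge family $\hat g(\lambda)$ induced by $g_+(\lambda)$, and under such a transformation the energy \emph{integrand} is not pointwise invariant: as in the Stokes argument in the proof of Proposition \ref{defene}, it changes by an exact term of the form $d\,\mathrm{tr}(\hat\Phi\, g_1)$. Your appeal to the two representatives being ``cohomologous'' therefore only controls integrals over closed surfaces; integrating the exact discrepancy over $\Sigma\setminus\gamma$ produces boundary integrals along $\gamma$ from both sides, and since $g_+$ (hence $g_1$) degenerates along $\gamma$ --- this is precisely what non-admissibility means --- there is no a priori reason for these boundary contributions to vanish or cancel. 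So the asserted pointwise identity for the global $\hat\Phi,\hat\Psi$ is not justified, and the subsequent integration does not go through as written.

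The paper closes exactly this gap by passing to the lightcone picture: the pair $g_+(\lambda),\,g_+(-\lambda)$ induces a gauge transformation $\hat G(\lambda)$ of $\underline{\mathfrak{gl}(2,\C)}$, and by Lemma \ref{lem:Dlambdadualsurface} the family $\hat D^\lambda$ built from the global $\hat\eta^\lambda$ is carried by $\hat G(\lambda)$, up to a $\lambda$-independent gauge, to the mean curvature sphere family $\mathcal D^\lambda$. The decisive observation is that $\hat G$, unlike $g_+$ itself, extends \emph{smoothly across} $\gamma$, because the mean curvature sphere congruence of $f$ does; the comparison of energies can then be carried out globally on the closed surface $\Sigma$, where Stokes produces no boundary terms, and Remark \ref{energy-integrand-constant} handles the remaining $\lambda$-independent gauge. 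Some version of this extension statement across $\gamma$ is indispensable to your argument and is currently missing.
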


\begin{remark}
This geometric interpretation might have applications in theoretical and mathematical physics, in particular in the AdS/CFT-correspondence (see for example \cite{Maldacena_1998,Alday_Maldacena_2009} for more details):
Consider a minimal surface in a totally geodesic $H^3\subset \mathrm{AdS}^4$ which intersects the boundary at infinity.
If the surface extends to a Willmore surface in $S^3$, giving rise to a  $\tau$-negative holomorphic section of the Deligne--Hitchin moduli space of a compact Riemann surface, the finite part of the area functional is given by the Willmore energy of the surface. If we additionally have a symmetry between the two pieces of the minimal surface in the two components of $H^3$
in $S^3=H^3\cup S^2\cup H^3$, the Willmore energy is given in terms of the energy of the section. A similar relation holds for space-like minimal surfaces in $\mathrm{AdS}^3$.
\end{remark}

\subsection{The lightcone approach to conformal surface geometry}\label{sec:light}
Our proofs of Proposition \ref{prowiin} and Theorem \ref{willmoreenergy} will use some concepts of conformal surface geometry in the lightcone model, which we recall here. We refer to \cite{BuPP,BuCa,Qui} for details.

Consider $\R^{4,1}$ with the standard Minkowski inner product
\[ 
\langle.,.\rangle=-(dx_0)^2+(dx_1)^2+\ldots +(dx_4)^2
\]
and the lightcone
\[
\mathcal L=\{x\in\R^{4,1}\mid \langle x,x\rangle=0\}.
\]
Then the map $\R^4\to P\R^{4,1}, (x_1,x_2,x_3,x_4)\mapsto [1:x_1:x_2:x_3:x_4]$ restricts to a natural diffeomorphism between the $3$-sphere $S^3$ and the projectivization $P\mathcal L\subset P\R^{4,1}$. There   exists a natural conformal structure on $P\mathcal L$ induced by $\langle.,.\rangle$, which contains the round metric on $S^3$.
If $\sigma$ is a (local) section of $\pi\colon \mathcal L\to P\mathcal L$ then the conformal structure is represented by the Riemannian metric $g_\sigma$ defined as
\[
g_\sigma(X,Y):=\langle d\sigma(X),d\sigma(Y)\rangle.
\]
The round metric is obtained from the lift $\sigma([x])= \frac{x}{x_0}, [x]\in P\mathcal L$. 
The space of orientation preserving conformal diffeomorphisms of $S^3\cong P\mathcal L$ is then given by $PSO(4,1)$ (via its natural action on $P\R^{4,1}$). 
Those transformations are also called M\"obius transformations.
We will often consider the conformal 3-sphere as the union $S^3 = H^3\cup S^2\cup H^3$, i.e. as the union of two hyperbolic balls separated by an equatorial $S^2$. In the lightcone model  a 2-sphere $S^2$ can be written as $P(v^\perp \cap \mathcal L)$ for a space-like vector $v\in\R^{4,1}$. It is known that the complement $\{[x]\in P\mathcal L\colon \langle x,v\rangle\neq 0\}$ is conformal to $H^3\cup H^3$. In particular, we note that a 2-sphere $S^2\subset S^3$ corresponds to a subspace of $\R^{4,1}$ of signature $(3,1)$.

Consider a conformal immersion $f\colon \Sigma\to P\mathcal L$ from a Riemann surface. There exists a real rank $4$ vector bundle $\mathcal S\subset \underline{\R^{4,1}}$ locally defined with respect to a holomorphic coordinate $z$ and a local lift $\hat f$ of $f$ to $\R^{4,1}$ as
\[
\mathcal S\otimes\C=\text{span}(\hat f,\hat f_z,\hat f_{\bar z},\hat f_{z,\bar z}),
\]
where for a function $g$ we denote $g_z:=\tfrac{\partial g}{\partial z}$ and so on. The real rank 4 bundle is well-defined, and $\langle.,.\rangle$ restricts to an inner product of type $(3,1)$. Under the correspondence between $2$-spheres in $S^3$ and subspaces of signature $(3,1)$ in $\R^{4,1}$ the bundle $\mathcal S$ can be interpreted as a family of $2$-spheres. It is  called the mean curvature sphere congruence associated with $f$. 
Its orthogonal complement is denoted by $\mathcal N$ and we obtain an induced decomposition of the trivial
connection on $\underline{\R^{4,1}}=\mathcal S\oplus\mathcal N$ into diagonal and off-diagonal parts
\[
d=\mathcal D_\mathcal S+\beta,
\]
where $\beta$ is tensorial and $\mathcal D_\mathcal S$ is a connection. The Willmore energy of the surface is then given by
\[
W(f)=-\tfrac{1}{4}\int_\Sigma\text{tr}(*\beta\wedge \beta),
\]
where $*dz=i dz,*d\bar z=-id\bar z$. A surface is Willmore if and only if
\[
d^{\mathcal D_\mathcal S}*\beta=0,
\]
which is equivalent to the flatness of the family of \[SO(4,1)_\C=SO(5,\C)\]connections
\[
\lambda\in\C^*\mapsto \mathcal D^\lambda=\mathcal D_\mathcal S+\lambda^{-1}\beta^{(1,0)}+\lambda \beta^{(0,1)}.
\]
The equivariant Willmore surfaces constructed in \cite{HH2} have the additional property that they are minimal in two hyperbolic balls separated by the boundary at infinity $S^2\subset S^3=P\mathcal L$. This condition is equivalent to
the fact that there exists a space-like vector $v$ of length 1 which is contained in $\mathcal S_p$ for all $p\in\Sigma$, see \cite{BuPP,BuCa,Qui} for a proof. Therefore $v$ is also parallel with respect to $\mathcal D^\lambda$ for all $\lambda\in\C^*$. After applying a M\"obius transformation we can assume that $v=e_4$.

In order to compare the $\SL(2,\C)$-family $\nabla^\lambda$ with the $\SO(5,\C)$-family $\mathcal D^\lambda$ of flat connections coming from the Willmore surface, we make use of the following
model of $\R^{4,1}$: Consider the space \[\mathcal H:=\{A\in\mathfrak{gl}(2,\C)\mid \bar A^T=A\}\]
of hermitian 2 by 2  matrices, and \[V=\mathcal H\oplus\R\]
equipped with the Lorentzian inner product
given by the quadratic form
\[q(A,r):=-\det(A)+r^2.\]
An isometry $\Psi\colon \R^{4,1}\to V$ is given by
\[
\Psi(x_0,x_1,x_2,x_3,x_4)=\left(\begin{pmatrix} x_0+x_1 & x_2+i x_3\\x_2-ix_3& x_0-x_1\end{pmatrix},x_4\right).
\]
Let $\Sigma$ be a Riemann surface.
Consider a $\C^*$-family $\nabla^\lambda$ of flat $\SL(2,\C)$-connections of the self-duality form (on the trivial $\C^2$ bundle over $\Sigma$ with standard hermitian metric) corresponding to an equivariant minimal surface $f:\tilde \Sigma\to H^3$ on the universal covering, i.e.
\[
\nabla^\lambda=d+\xi^\lambda=d+\lambda^{-1}\xi_{-1}+\xi_0+\lambda\xi_1.
\]
We have that  $\xi_{-1}\in\Omega^{(1,0)}(\Sigma,\mathfrak{sl}(2,\C))$ is nilpotent,
$\xi_{1}\in\Omega^{(0,1)}(\Sigma,\mathfrak{sl}(2,\C))$, $\xi_{0}\in\Omega^1(\Sigma,\mathfrak{sl}(2,\C))$
with
\[
d\xi^\lambda+\xi^\lambda\wedge\xi^\lambda=0
\]
for all $\lambda\in\C^*$. The family $\xi^\lambda$ satisfies the following reality condition
\[
\overline{\xi_{-1}}^T=\xi_1\quad \text{ and } \quad \overline{\xi_0}^T=-\xi_0,
\]
which implies 
\begin{equation}\label{eq:omegareal}
-\xi^{-\bar\lambda^{-1}} = \overline{\xi^\lambda}^T. 
\end{equation}
Consider a parallel frame $F\colon\C^*\times\tilde \Sigma\to\SL(2,\C)$ for $\nabla^\lambda$, i.e.,
\begin{equation}\label{eq:Flambdaparallel}
dF^\lambda=-\xi^\lambda F^\lambda
\end{equation}
with $F^\lambda(p)=\text{Id}$ for some fixed $p\in\tilde \Sigma$.
By \eqref{eq:omegareal} we have 
\begin{equation}\label{eq:Flambdareal}
d\overline{F^\lambda}^T = -\overline{F^\lambda}^T\overline{\xi^\lambda}^T = \overline{F^\lambda}^T\xi^{-\bar\lambda^{-1}}.
\end{equation}
On the other hand, \eqref{eq:Flambdaparallel} implies 
$d((F^\lambda)^{-1}) = (F^{\lambda})^{-1}\xi^\lambda$,
so that 
\begin{equation}\label{eq:Flambdarealinverse}
(F^\lambda)^{-1} = \overline{F^{-\bar\lambda^{-1}}}^T.
\end{equation}
 The corresponding equivariant minimal surface $f$ in $H^3$ is now given by
\[
\hat f=((F^{-1})^{-1}F^1,1)=(\overline F^T F,1)\colon \tilde M\to  V,
\]
where we put $F:=F^1$ for short. As $\det(F)=1$, $f=\R \hat f$ maps to the projectivized lightcone
$P\mathcal L\subset PV$. Let $z$ be a local holomorphic coordinate on $U\subset \Sigma$. The complexified mean curvature sphere $\mathcal S\otimes \C$ is spanned over $U$ by
\[
\hat f, \quad \tfrac{\partial}{\partial z}\hat f=2(\overline F^T\xi_{-1}(\tfrac{\partial}{\partial z}) F,0),\quad \tfrac{\partial}{\partial \bar z}\hat f=2(\overline F^T\xi_{1}(\tfrac{\partial}{\partial \bar z}) F,0),\quad  \tfrac{\partial}{\partial z}\tfrac{\partial}{\partial \bar z}\hat f=\mu(\overline F^T F,0),
\]
where the local function $\mu$ is defined by
\[
\mu\text{Id}=2 (\xi_{-1}\wedge\xi_1-\xi_1\wedge \xi_{-1})(\tfrac{\partial}{\partial z},\tfrac{\partial}{\partial \bar z}),
\]
as a short computation using flatness of $d+\xi^\lambda$ shows.
From this we obtain the following frame of $\mathcal S\otimes \C$:
\begin{equation}\label{inducedframe}
\hat f, \quad (\overline F^T\xi_{-1}(\tfrac{\partial}{\partial z}) F,0),\quad (\overline F^T\xi_{1}(\tfrac{\partial}{\partial \bar z}) F,0), \quad (0,1).\end{equation}
In particular, $(0,1)\in V$ is a constant space-like vector contained in $\mathcal S_p$ for all $p\in \tilde \Sigma$.
Moreover, by the Riemann extension theorem, we
observe that the mean curvature sphere extends through the branch points of $f$ (given by the zeros of $\xi_{-1}$).

Note that \eqref{inducedframe}  yields an induced frame of the flat rank 5 bundle $V$ by extending the mean curvature sphere bundle by a constant length $1$ section  of its orthogonal complement. We want to describe the connection $\mathcal D^\lambda$ with respect to this frame.

Locally, on open sets where $F$ is well-defined and where we have a holomorphic coordinate $z$, we can find an $\SU(2)$-frame such that
\begin{equation}\label{eq:omega-pm}
\begin{split}
\xi_{-1}&=\begin{pmatrix}0& e^udz\\0&0\end{pmatrix}\\
\xi_0&=\begin{pmatrix}\tfrac{1}{2} u_z dz-\tfrac{1}{2} u_{\bar z} d\bar z& -e^{-u} \bar q d\bar z\\e^{-u}q dz & -\tfrac{1}{2} u_z dz+\tfrac{1}{2} u_{\bar z} d\bar z\end{pmatrix}\\
\xi_1&=\overline{\xi_{-1}}^T=\begin{pmatrix}0& 0\\ e^ud\bar z & 0\end{pmatrix}.\\
\end{split}
\end{equation}
The locally defined function $u$ is determined by the induced metric $g$ (from the hyperbolic minimal surface)  by
\[g=e^{2u} dz\otimes d\bar z,\] and $u_z,u_{\bar z}$ are determined by
$u_z dz+ u_{\bar z} d\bar z=du$
and $q$ is a holomorphic function (representing the Hopf differential $q(dz)^2$ of the surface).

We obtain the following (moving) frame for $V\otimes \C$
\begin{equation}
\begin{split}
\psi_1&:=\left(\bar F^T F,0\right),\quad \psi_2:=\left(\bar F^T\begin{pmatrix} 0&1\\0&0\end{pmatrix} F,0\right),\quad \psi_3:=\left(\bar F^T\begin{pmatrix} 0&0\\1&0\end{pmatrix} F,0\right),\\
\psi_4&:=\left(0,1\right),\quad \psi_5:=\left(\bar F^T \begin{pmatrix} 1&0\\0&-1\end{pmatrix} F,0\right).
\end{split}
\end{equation}
Due to \eqref{inducedframe} and \eqref{eq:omega-pm} the mean curvature sphere bundle
$\mathcal S\otimes \C\subset V\otimes \C$ is spanned by $\psi_1,\dots,\psi_4$. 

\begin{lemma}\label{con1SN}
With respect to the frame $(\psi_1,\dots,\psi_5)$, the connection $\mathcal D^\lambda=\mathcal D+\lambda^{-1} \beta^{1,0}+\lambda\beta^{0,1}$ is given by
\begin{equation}\label{eq:con1SN}
d+\begin{pmatrix} 
0 & 0 &-e^u & 0 & 0\\
-2e^u & u_z & 0 & 0 & 0\\
0 & 0 &-u_z &0&  \lambda^{-1}2q e^{-u}\\
0 & 0 &0 &0&0\\
0 & -\lambda^{-1}q e^{-u} &0 &0&0\\
\end{pmatrix}dz +\begin{pmatrix} 
0 & -e^u &0 &0&0\\
0 & -u_{\bar z} & 0&0&\lambda 2\bar q e^{-u}\\
-2e^u & 0 &u_{\bar z} &0&0\\
0 & 0 &0 &0&0\\
0 & 0 &-\lambda \bar q e^{-u} &0&0\\
\end{pmatrix} d\bar z.
\end{equation}
\end{lemma}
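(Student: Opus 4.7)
The plan is to compute the connection matrix of the trivial connection $d$ on the flat bundle $\underline{V\otimes\C}\cong\underline{\R^{4,1}\otimes\C}$ with respect to the frame $(\psi_1,\ldots,\psi_5)$---this is precisely $\mathcal D^\lambda$ at $\lambda=1$---then decompose it according to the splitting $\mathcal S\oplus\mathcal N$ as $\mathcal D_{\mathcal S}+\beta$, and finally insert the $\lambda$-dependence via $\mathcal D^\lambda=\mathcal D_{\mathcal S}+\lambda^{-1}\beta^{(1,0)}+\lambda\beta^{(0,1)}$.

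Write $M_1=\mathrm{Id}$, $M_2=E_{12}$, $M_3=E_{21}$, $M_5=H:=\mathrm{diag}(1,-1)$, so that $\psi_i=(\bar F^T M_i F,0)$ for $i\neq 4$. Combining $dF=-\xi F$ at $\lambda=1$ with the reality condition \eqref{eq:omegareal}, which yields $\overline{\xi}^T=\xi_{-1}-\xi_0+\xi_1$ and hence $d\bar F^T=-\bar F^T\overline{\xi}^T$, one gets
\begin{equation*}
	d\psi_i = \bigl(\bar F^T(-\overline{\xi}^T M_i-M_i\xi)F,\,0\bigr)\qquad (i=1,2,3,5),\qquad d\psi_4=0.
\end{equation*}
Substituting the explicit forms \eqref{eq:omega-pm} of $\xi_{\pm 1}$ and $\xi_0$ and expanding each matrix $-\overline{\xi}^T M_i-M_i\xi$ in the basis $\{\mathrm{Id},E_{12},E_{21},H\}$ of $\mathfrak{gl}(2,\C)$ is then a direct calculation: for $M_1=\mathrm{Id}$ one finds $-2(\xi_{-1}+\xi_1)=-2e^udz\cdot E_{12}-2e^ud\bar z\cdot E_{21}$ (column $1$); for $M_5=H$ the result is purely off-diagonal, $2e^{-u}q\,dz\cdot E_{21}+2e^{-u}\bar q\,d\bar z\cdot E_{12}$ (column $5$); and for $M_2, M_3$ the mixed entries involving $u_z, u_{\bar z}, e^u, e^{-u}q, e^{-u}\bar q$ fill in columns $2$ and $3$. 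The vanishing of the entries $(1,5)$ and $(5,1)$ follows from tracelessness of $\xi_{\pm 1}$ and $\xi_0$, i.e.\ from $\mathrm{tr}(-\overline\xi^T H-H\xi)=0$ and $\mathrm{tr}(-\overline\xi^T-\xi)=0$.

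The splitting $d=\mathcal D_{\mathcal S}+\beta$ is read off from the block decomposition with respect to $\mathcal S\otimes\C=\mathrm{span}(\psi_1,\psi_2,\psi_3,\psi_4)$ and $\mathcal N\otimes\C=\mathrm{span}(\psi_5)$: only the entries $(5,2),(5,3),(2,5),(3,5)$ contribute to $\beta$, and from the above calculation these decompose cleanly by type---the $dz$-entries $(5,2),(3,5)$ form $\beta^{(1,0)}$ and the $d\bar z$-entries $(5,3),(2,5)$ form $\beta^{(0,1)}$. Replacing $\beta$ by $\lambda^{-1}\beta^{(1,0)}+\lambda\beta^{(0,1)}$ while keeping the $\mathcal D_{\mathcal S}$-entries untouched then yields precisely the matrix in \eqref{eq:con1SN}. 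The main work is the algebraic bookkeeping in step two; the only subtlety is to handle the signs coming from the reality condition on $\xi$ correctly, after which the $(1,0)/(0,1)$-separation of $\beta$ is automatic because $q,\bar q$ enter $\xi$ in a type-pure way.
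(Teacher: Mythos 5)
Your proposal is correct and follows essentially the same route as the paper's proof: since $-\overline{\xi}^T=\xi_-$ and $\xi=\xi_+$, your formula $d\psi_i=\bigl(\bar F^T(-\overline{\xi}^TM_i-M_i\xi)F,0\bigr)$ is exactly the paper's identity $d(\bar F^TUF)=\bar F^T(\xi_-U-U\xi_+)F$, followed by the same direct expansion in the basis $\{\mathrm{Id},E_{12},E_{21},H\}$ and insertion of the $\lambda$-weights on the $\mathcal S$--$\mathcal N$ off-diagonal block. The only (inconsequential) quibble is that tracelessness of $\xi_--\xi_+$ accounts for the entry $(1,1)$ rather than $(5,1)$; the latter vanishes because $\xi_--\xi_+=-2(\xi_{-1}+\xi_1)$ is strictly off-diagonal, which your direct calculation covers anyway.
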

\begin{proof}
Let $\xi_{\pm} := \xi^{\pm 1} = \pm \xi_{-1} + \xi_0 \pm \xi_{1}$. Then, by \eqref{eq:Flambdareal} we have 
$dF = -\xi_+F$ and $d\bar F^T  = \bar F^T\xi_-$. It follows that if $U\in\mathfrak{gl}(2,\C)$ is a constant matrix, then 
\[
d(\bar F^TUF) = \bar F^T(\xi_-U-U\xi_+)F.
\]
Since $\psi_1,\psi_2,\psi_3,\psi_5$ are of the form $(\bar F^TUF,0)$ and obviously $d\psi_4=0$ the lemma follows by direct calculation using the explicit form of $\xi_{\pm}$ provided by \eqref{eq:omega-pm}.
\end{proof}

\begin{remark}\label{rem:hatF}
Consider the trivial bundle $\underline{V\otimes\C} = \underline{\mathfrak{gl}(2,\C)}\oplus \underline{\C}$. Together with the above quadratic form this becomes an $\mathrm{SO}(5,\C)$-bundle. Note that we have an action by $\SL(2,\C)$-valued functions via 
$\hat FU = \bar F^T U F$. It thus follows that $\hat F$ gives a gauge transformation from the standard frame given by 
\[
e_1 = \left(\begin{pmatrix} 1&0\\0&1\end{pmatrix},0\right),\;\; e_2=\left(\begin{pmatrix} 0&1\\0&0\end{pmatrix},0\right),\;\; e_3=\left(\begin{pmatrix} 0&0\\1&0\end{pmatrix},0\right),\;\; e_4=\left(0,1\right),\;\; e_5 =\left( \begin{pmatrix} 1&0\\0&-1\end{pmatrix},0\right)
\]
to the frame $\psi_1,\dots,\psi_5$. Since $\psi_4 = (0,1)$ is parallel, $\mathcal D+\beta$ induces a connection on the subbundle $\underline{\mathfrak{gl}(2,\C)}$.  Therefore, Lemma \ref{con1SN} implies that 
\[
\mathcal D+\beta = d + \hat F^{-1}d\hat F.
\]
\end{remark}

\begin{lemma}\label{lem:Dlambdadualsurface}
Let $\nabla^\lambda = d+\xi^\lambda$ be as above. 
Consider the trivial rank $5$ bundle $V\otimes \C = \underline{\mathfrak{gl}(2,\C)}\oplus \underline{\C}$ equipped with the connection $\hat D^\lambda$ defined by \[
\hat D^\lambda (A,f) = (dA + \hat\xi^{(-\lambda)}A - A\hat\xi^\lambda,df),
\]
where $\hat\xi$ is obtained from $\xi$ by the dual surface construction, i.e. $\hat\nabla^\lambda = \nabla^\lambda.\tilde h(\lambda)$,
where $\tilde h(\lambda) = \mathrm{diag}(1,\lambda)$. Then with respect to the frame
\[
\tilde e_1 = e_5,\quad \tilde e_2 = e_2,\quad \tilde e_3 = -e_3,\quad \tilde e_4 = e_4,\quad \tilde e_5 = e_1
\]
the connection $\hat D^\lambda$ is given by equation \eqref{eq:con1SN}.
Thus, $\hat D^\lambda$ and $\mathcal D^\lambda$ are gauge equivalent by a $\lambda$-independent gauge transformation. 
\end{lemma}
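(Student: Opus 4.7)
The plan is to verify the lemma by a direct matrix computation carried out in two stages. First I would write down $\hat\xi^\lambda$ explicitly, then evaluate $\hat D^\lambda$ on each basis vector $\tilde e_i$ and compare entry by entry with \eqref{eq:con1SN}; the gauge equivalence between $\hat D^\lambda$ and $\mathcal D^\lambda$ will then follow from Lemma \ref{con1SN} combined with Remark \ref{rem:hatF}.

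For the first stage, note that $\tilde h(\lambda) = \mathrm{diag}(1,\lambda)$ depends only on $\lambda$, so $d\tilde h = 0$ and the gauge formula reduces to conjugation: $\hat\xi^\lambda = \tilde h(\lambda)^{-1}\xi^\lambda\,\tilde h(\lambda)$. This leaves the diagonal of $\xi^\lambda$ unchanged and rescales the $(1,2)$- and $(2,1)$-entries by $\lambda$ and $\lambda^{-1}$ respectively. Substituting \eqref{eq:omega-pm} yields
\[
\hat\xi^\lambda = \begin{pmatrix}
\tfrac{1}{2}u_z\, dz - \tfrac{1}{2}u_{\bar z}\, d\bar z & e^u\, dz - \lambda e^{-u}\bar q\, d\bar z \\
\lambda^{-1}e^{-u}q\, dz + e^u\, d\bar z & -\tfrac{1}{2}u_z\, dz + \tfrac{1}{2}u_{\bar z}\, d\bar z
\end{pmatrix},
\]
so the only $\lambda^{-1}$-term sits in position $(2,1)$ and the only $\lambda$-term in position $(1,2)$.

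For the second stage I would compute $\hat D^\lambda \tilde e_j = \hat\xi^{-\lambda}\,\tilde e_j - \tilde e_j\,\hat\xi^\lambda$ for each $j$. The case $j=4$ is immediate since $\tilde e_4$ is a constant section of the scalar summand. For $j=1,2,3,5$ the output is a $\mathfrak{gl}(2,\C)$-valued $1$-form, which I would expand in the basis $(\tilde e_1,\tilde e_2,\tilde e_3,\tilde e_5)$ via
\[
\begin{pmatrix}a & b \\ c & d\end{pmatrix} = \tfrac{a-d}{2}\,\tilde e_1 + b\,\tilde e_2 - c\,\tilde e_3 + \tfrac{a+d}{2}\,\tilde e_5,
\]
and read off the $j$-th column of the connection matrix. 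Column-by-column comparison with \eqref{eq:con1SN} then confirms the first claim; the crucial point is that the substitution $\lambda\mapsto -\lambda$ in the left factor, combined with the sign built into $\tilde e_3 = -e_3$, produces exactly the $\lambda^{\pm 1}$-patterns that appear in columns $2$, $3$ and $5$.

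The second assertion is then essentially free. By Lemma \ref{con1SN}, $\mathcal D^\lambda$ is represented in the frame $(\psi_1,\ldots,\psi_5)$ by the same matrix of $1$-forms \eqref{eq:con1SN}. The transition from $(\tilde e_i)$ to $(\psi_i)$ factors as the constant permutation-with-signs $\tilde e_i \mapsto e_i$ followed by the $\lambda$-independent gauge transformation $\hat F$ of Remark \ref{rem:hatF}. Composing the two yields a $\lambda$-independent gauge equivalence between $\hat D^\lambda$ and $\mathcal D^\lambda$. The main obstacle throughout is purely bookkeeping — tracking signs and powers of $\lambda$ through the conjugation, the substitution $\lambda\to -\lambda$, and the non-standard ordering of $(\tilde e_1,\ldots,\tilde e_5)$ — but no conceptual difficulty arises.
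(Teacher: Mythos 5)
Your proposal is correct and follows essentially the same route as the paper: the authors likewise observe that $\hat D^\lambda \tilde e_4 = 0$, compute $\hat D^\lambda \tilde e_i = (\hat\xi^{(-\lambda)}E_i - E_i\hat\xi^\lambda,0)$ for the constant matrices $E_i$ by direct calculation using \eqref{eq:omega-pm}, and then obtain the $\lambda$-independent gauge equivalence as the composition $\hat G = \hat F\circ S$, where $S$ is exactly your sign-permutation $\tilde e_i\mapsto e_i$ and $\hat F$ is the transformation of Remark \ref{rem:hatF}. Your explicit formula for $\hat\xi^\lambda$ and the expansion of a $2\times 2$ matrix in the basis $(\tilde e_1,\tilde e_2,\tilde e_3,\tilde e_5)$ check out against the columns of \eqref{eq:con1SN}.
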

\begin{proof}
Obviously $\hat D^\lambda \tilde e_4 = 0$ and for $i=1,2,3,5$ we have $\tilde e_i = (E_i,0)$ with a constant matrix $E_i\in\mathfrak{gl}(2,\C)$. Thus, 
\[
\hat D^\lambda \tilde e_i = (\hat\xi^{(-\lambda)}E_i-E_i\xi^\lambda,0).
\]
A direct calculation then yields the connection matrix. 

Note that in the notation of Remark \ref{rem:hatF} we have that $\tilde e_i $ is obtained from $e_i$ by multiplying the matrix part by $\mathrm{diag}(1,-1)$ and leaving the scalar part unchanged. Denote this map $e_i\mapsto \tilde e_i$ by $S$. Then $S^{-1} = S$ and we have with the notation of Lemma \ref{con1SN}
\[
\psi_i = \hat FS\tilde e_i.
\]
Clearly, the gauge transformation $\hat G = \hat F\circ S$ is independent of $\lambda$ and satisfies 
$\hat G^{-1}\circ \mathcal D^\lambda \circ \hat G = \hat D^\lambda$
as can be checked in the frame $\{\tilde e_i\}$.
\end{proof}

\subsection{Proofs}\label{sec:proofs}
We will now use the theory of the previous section to give the proofs of the results formulated in section \ref{sec:Willhigher}. 
\begin{remark}\label{energy-integrand-constant}
Note that the energy integrand $E$ (and not only the integrated energy) is invariant under gauge transformations
which are constant in $\lambda$. In particular, we can use any $\lambda$-independent frame to compute
the energy. Similarly, the Willmore energy can be computed with respect to any frame of the (complexified) $\underline{\R^{4,1}}$-bundle.
\end{remark}

\begin{proof}[Proof of Proposition \ref{prowiin}.] By Lemma \ref{con1SN}
 the Willmore integrand is locally given by
\[
-\tfrac{1}{4}\text{tr}(*\beta\wedge\beta) = -\tfrac{i}{2}\mathrm{tr}(\beta^{1,0}\wedge\beta^{0,1})=2i q\bar q e^{-2u} dz\wedge d\bar z.
\]
On the other hand, locally, and with respect to the chosen $\SU(2)$ frame, $\hat\Phi$ and $\hat\Psi$ are given by
\[\hat \Phi=\begin{pmatrix} 0&0\\e^{-u} q&0\end{pmatrix} dz \quad \text{ and } \quad  \hat \Psi=\begin{pmatrix} 0&-e^{-u} \bar q\\ 0&0\end{pmatrix} d\bar z,\]
and we obtain
\[
-2i\, \text{tr}(\hat \Phi\wedge \hat\Psi)=2iq\bar q e^{-2u} dz\wedge d\bar z = -\tfrac{1}{4}\mathrm{tr}(*\beta\wedge\beta),
\]
proving Proposition \ref{prowiin}.
\end{proof}

\begin{proof}[Proof of Theorem \ref{willmoreenergy}:]
Let $s$ be a section satisfying the assumptions in Theorem \ref{willmoreenergy},
with associated equivariant Willmore surface $f\colon \tilde\Sigma\to S^3 = P\mathcal L$.
We start with a lift of $s$
\[
\nabla^\lambda=d+\eta^\lambda=d+\eta_{-1}\lambda^{-1}+\eta_0+\dots,
\]
where $\eta^\lambda$ is a $\lambda$-family of  $\mathfrak{sl}(2,\C)$-valued 1-forms on $\Sigma$. 
There exists a  curve $\gamma\subset \Sigma$ such that on $M = \Sigma\setminus \gamma$ we have a holomorphic $\lambda$-family of gauge transformations $g_+(\lambda)$ which extends to $\lambda = 0$ and which gauges $\nabla^\lambda$ into self-duality form. That is, on $M$ we have $\nabla^\lambda.g_+(\lambda) = \lambda^{-1}\phi +\nabla_0 + \lambda\phi^*$, where $(\nabla_0,\phi)$ solves the self-duality equations and $\phi$ is still nilpotent. Denote by $L$ the kernel bundle of the Higgs field $\phi$  with orthogonal complement $L^\perp$.

Choose a complementary bundle $\tilde L^*$ of the kernel bundle $\tilde L=\ker(\eta_{-1})$ and let $\tilde h(\lambda) = \mathrm{diag}(1,\lambda)$ with respect to the splitting $\tilde L^* \oplus \tilde L$. Consider the dual surface construction
\[
\hat\nabla^\lambda = \nabla^\lambda.\tilde h(\lambda) = d+\hat\eta^\lambda.
\]
The family $g_+(\lambda)$ induces a family of gauge transformations
$\hat g(\lambda)$, which gauges $\hat\nabla^\lambda$ into the $\SU(1,1)=\SL(2,\R)$ self-duality form (see the discussion in Remark \ref{rem:dualsurfacereality}). We claim that  $\hat g$ extends holomorphically to $\lambda=0$ as a gauge transformation. To see this, note that we have
\[
\hat g(\lambda)=(\pi^{\tilde L^*}\oplus\lambda \pi^{\tilde L}) g_+(\lambda)(\pi^{L^\perp}\oplus \tfrac{1}{\lambda}  \pi^L).
\]
Because $g_+$ maps $L$ to $\tilde L$ it follows that $\hat g$ extends holomorphically to $\lambda=0$ as a  gauge transformation.

On the complex rank 4 bundle $\underline{\mathfrak{gl}(2,\C)}\to\Sigma$ we  consider the family of flat connections $\hat D^\lambda$ given by
\[ 
\hat D^\lambda A=d A+ \hat\eta^{(-\lambda)}A-A\hat\eta^\lambda.
\]
Note that the pair of gauge transformations 
$\hat g_+(\lambda),\hat g_+(-\lambda)$ induces a 
gauge transformation $\hat G(\lambda)$ on $\underline{\mathfrak{gl}(2,\C)}$ by
\[
\hat G(\lambda):=A\mapsto  g_+(-\lambda)Ag_+(\lambda)^{-1}.
\]
Moreover,  by Lemma \ref{lem:Dlambdadualsurface}, $\hat D^\lambda.\hat G(\lambda)$ and  $\mathcal D^\lambda$ are gauge equivalent by a $\lambda$-independent gauge transformation. The mean curvature sphere family extends smoothly through the singularity set of the equivariant minimal surface $f$ (or likewise $g_+$). Therefore also $\hat G$ extends smoothly through this singularity set as a positive gauge transformation. The Theorem now follows from Remark \ref{energy-integrand-constant} and Proposition \ref{prowiin}.
\end{proof}

\section{Energy estimates}\label{Sec:enes}

Corollary \ref{positivity} gives us a possibility to distinguish the space of twistor lines, i.e., the space of $\tau$-negative admissible holomorphic sections, from the space of $\tau$-positive admissible sections, by looking at the value range of $\mathcal E$. Note that this criterion is much easier to handle in practice
than determining whether a $\tau$-real section is $\tau$-positive or $\tau$-negative.
We shall be able to use $\mathcal E$ also to distinguish the recently discovered new components of $\tau$-negative sections \cite{HH2} from the component of twistor lines. We emphasize that these $\tau$-negative sections cannot be admissible. 
In view of Simpson's question \cite{Si-Hodge}, such a complex-analytic tool to distinguish those components seems desirable.

The first indication that the function $\mathcal E$ does help can be seen in the case of tori, i.e., for $\Sigma$ of genus 1. In this case, the $\SL(2,\mathbb C)$ Deligne--Hitchin moduli space has a 2-fold covering of the $\mathbb C^*$ Deligne--Hitchin moduli space. 
Note that the $\mathcal E$-function is still well-defined in this situation, even if we do not have any irreducible  $\lambda$-connections at all:
It is well-known that on a torus solutions of the self-duality equations are totally reducible.
Applying Hitchin's spectral curve approach \cite{HitchinHM} to this situation, we easily see that twistor lines correspond to spectral data of spectral genus 0.
Other components of the space of $\tau$-negative holomorphic sections are given by spectral data for spectral curves of positive genus, compare with \cite{BabBob,HH2, Hitchin_1987}. While the spectral genus
 distinguishes the different components of $\tau$-negative sections, 
the following theorem indicates the use of the $\mathcal E$-function in this context.

\begin{theorem}\label{willmoreestimate}
Let $s$ be a holomorphic section of the (singular) Deligne--Hitchin moduli space over a Riemann surface of genus $1$ which is $\tau$-negative and has a nilpotent Higgs field. Assume that the spectral genus is bigger than $1$.
 Then 
\[
\mathcal E(s)\geq \frac{1}{4}.
\]
\end{theorem}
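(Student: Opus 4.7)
The plan is to reduce the estimate to a Willmore energy bound via Theorem \ref{willmoreenergy}, and then extract this bound from the spectral curve description of the Deligne--Hitchin moduli space over a torus. On a genus one surface, polystability of a Higgs pair with non-zero nilpotent Higgs field $\Phi$ forces the kernel line bundle $L$ to satisfy $\deg L = 0$, since $\Phi$ can be interpreted as a non-zero section of $L^2\otimes K_\Sigma = L^2$. Combining this with Theorem \ref{willmoreenergy} when $s$ is non-admissible as in \cite{HH2}, or with Propositions \ref{twisttheenergy2} and \ref{prowiin} when $s$ comes from an equivariant minimal surface in $H^3$, one obtains
\[
\mathcal E(s) \,=\, \frac{1}{4\pi}\mathcal W(f),
\]
where $f$ is the associated equivariant (branched) Willmore surface. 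It therefore suffices to show $\mathcal W(f) \geq \pi$ whenever the spectral genus of $s$ exceeds one.

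For the Willmore bound I would appeal to the spectral curve formalism developed in this setting by Hitchin \cite{HitchinHM} and Babich--Bobenko \cite{BabBob}: since every semi-simple representation of $\pi_1(\Sigma) \cong \mathbb Z^2$ is abelian, any section as in the statement is encoded by a hyperelliptic spectral curve $X \to \mathbb CP^1$ of genus equal to the spectral genus $g_s$, together with a family of eigenline bundles on a double cover of $\Sigma$. Using Corollary \ref{cor:ResEnergy}, the energy $\mathcal E(s)$ can be rewritten as the residue at $\lambda=0$ of the pull-back of the meromorphic connection $\nabla_\psi$, which on the spectral curve side becomes a period integral of a canonical meromorphic differential with simple poles over $\{0,\infty\}$, cf.\ \cite[Theorem 13.17]{HitchinHM} and \cite[Theorem 9]{BabBob}.

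The assumption $g_s>1$ excludes the low-complexity strata: $g_s=0$ selects precisely the twistor lines, for which $\mathcal E\leq 0$ by Theorem \ref{harmonicenergy}, whereas $g_s=1$ recovers a Hopf-type family of equivariant harmonic tori whose energies do not obstruct the bound. For $g_s\geq 2$, the combination of the reality conditions imposed by $\tau$-negativity, the pole normalisation coming from the nilpotency of $\Phi$ at $\lambda=0$, and the integrality of the periods coming from closing up the $\C^*$-family of flat connections along the two generators of $\pi_1(\Sigma)$ forces the period integral to satisfy $\mathcal E(s) = \mathcal W(f)/(4\pi) \geq 1/4$. The main obstacle is the explicit evaluation of this minimising period and identifying the precise reason for the specific constant $1/4$; I would expect it to arise from a symmetric minimising configuration of branch points on the spectral $\mathbb CP^1$ for which the residue formula returns exactly $1/4$, and a convexity argument in the period coordinates reducing the general case to this symmetric one.
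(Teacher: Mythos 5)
Your first step agrees with the paper: on a genus one surface the kernel bundle $L$ of a non-zero nilpotent Higgs field has $\deg L=0$ (it is a square root of $K_\Sigma=\mathcal O_\Sigma$ admitting the Higgs field as a section of $L^2K_\Sigma$), so Theorem \ref{willmoreenergy} gives $\mathcal E(s)=\tfrac{1}{4\pi}\mathcal W(f)$ for the associated equivariant Willmore surface $f$. This reduction is exactly how the paper begins, and it is correct.

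The gap is in the second step, which is where all the content of the theorem lies. You need the quantitative lower bound $\mathcal W(f)\geq\pi$, and your argument for it does not exist yet: you assert that reality conditions, a pole normalisation, and ``integrality of the periods'' force $\mathcal E(s)\geq\tfrac14$, and then you concede that the main obstacle is precisely ``identifying the precise reason for the specific constant $1/4$,'' deferring to a hoped-for symmetric minimising configuration and an unspecified convexity argument in period coordinates. That is not a proof of the estimate; it is a restatement of the problem. There is also no indication that the residue/period reformulation via Corollary \ref{cor:ResEnergy} would produce a quantity with the integrality you invoke -- the energy of these sections varies continuously in families (e.g.\ with the conformal type of the torus, as the paper notes for spectral genus $1$, where the energy can be made arbitrarily close to $0$), so no naive period-integrality can be the mechanism; the hypothesis of spectral genus $>1$ must enter in an essential, non-formal way.

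The paper closes this gap with a completely different tool: the quaternionic Pl\"ucker estimate of Ferus--Leschke--Pedit--Pinkall \cite[Equation (89)]{FLPP}. Spectral genus at least $2$ guarantees two linearly independent quaternionic holomorphic sections of a quaternionic holomorphic line bundle on an unbranched $4$-fold cover of the torus, whose Willmore energy equals that of $f$ on a fundamental piece, and the Pl\"ucker formula then yields the lower bound directly. If you want to salvage your spectral-curve route you would have to actually carry out the minimisation over spectral data with $g_s\geq 2$ and prove the minimum is $\pi$, which is a substantial open step in your write-up; as it stands, the crucial inequality is assumed rather than proved.
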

\begin{proof}
Such sections give rise to M\"obius equivariant Willmore surfaces $f\colon\tilde \Sigma\to S^3$ into the conformal 3-sphere, see \cite{BabBob} or also \cite{HH2}.
Because the kernel bundle of the nilpotent Higgs field on a torus has degree 0, we obtain from Theorem \ref{willmoreenergy} that
\[\mathcal E(s)=\frac{1}{4\pi}\mathcal W(f),\]
where $\mathcal W(f)$ is the Willmore energy of a fundamental piece  of $f$.
The theorem follows from an application of the  quaternionic Pl\"ucker estimate, see \cite[Equation (89)]{FLPP}:
That the spectral genus is at least 2 (in fact it must be odd) implies that there are two quaternionic holomorphic linearly independent sections
on an unbranched 4-fold covering of the torus of a quaternionic holomorphic line bundle. The Willmore energy of this quaternionic holomorphic line bundle coincides 
with the Willmore energy of $f$ on a fundamental piece.
\end{proof}

Note that holomorphic sections with nilpotent Higgs field on a torus cannot be totally reducible and therefore are not twistor lines. They therefore lie in a different component of the space of $\tau$-negative sections than the twistor lines. 
The assumption on the spectral genus in Theorem \ref{willmoreestimate}
leaves open the case of spectral genus 1. In that case, as the solutions  are equivariant, one can make the energy $E(s)$
to be arbitrarily close to $0$ by changing the conformal type of the torus $\Sigma$. On the other hand, it does not seem possible to fix the Riemann surface $\Sigma$ of genus 1 and then find, for each $\epsilon>0$, a $\tau$-negative holomorphic  section $s$ 
in the Deligne--Hitchin moduli space with nilpotent Higgs field  such that $\mathcal E(s)<\epsilon$. 

In general, one might try to use the energy to distinguish different components of $\tau$-negative holomorphic  sections of the Deligne--Hitchin moduli space.
%
A first result is given in the following theorem, where we show that the energy is positive for the $\tau$-negative holomorphic sections constructed in \cite{HH2}. 

\begin{theorem}
There exist Riemann surfaces $\hat\Sigma$ of sufficiently large genus $g\geq2$ whose $\SL(2,\C)$ Deligne--Hitchin moduli space admits irreducible
 $\tau$-negative holomorphic sections $s$ with
\[
\mathcal E(s)>0.
\]
In particular, these sections cannot be twistor lines.
\end{theorem}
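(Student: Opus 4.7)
The plan is to apply Theorem \ref{willmoreenergy} to the $\tau$-negative non-admissible sections produced in \cite{HH2}. By construction these sections satisfy the required properties (1)--(3): nilpotent Higgs field, failure of admissibility along a real analytic curve $\gamma \subset \hat\Sigma$, and an associated M\"obius equivariant Willmore surface $f\colon \tilde{\hat\Sigma}\to S^3 = H^3\cup S^2\cup H^3$. Theorem \ref{willmoreenergy} then yields
\[
\mathcal E(s) \,=\, \tfrac{1}{4\pi}\mathcal W(f) \,+\, \deg L,
\]
with $L$ the kernel bundle of the nilpotent Higgs field. Since $\deg L < 0$ by stability of $s(0)$, proving $\mathcal E(s) > 0$ reduces to establishing the geometric inequality $\mathcal W(f) > 4\pi\,|\deg L|$ for an appropriately chosen member of the \cite{HH2} family.

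The first step is to read $\deg L$ off the \cite{HH2} construction. Since $\phi \in H^0(KL^{-2})$ (in the notation of Section \ref{sec:Willhigher}), the degree formula $2|\deg L| = 2g-2 - \deg(\mathrm{div}\,\phi)$ shows that examples with larger branching of the Higgs field have smaller $|\deg L|$. The plan is to select \cite{HH2} examples whose branching data push $|\deg L|$ as low as the stability constraint allows, ideally to $1$, and at the same time admit a large genus.

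The second step is a lower bound on $\mathcal W(f)$. Two complementary tools are available: \emph{(a)} the Gauss--Bonnet decomposition applied to the two branched minimal surface pieces $f_\pm$ in $H^3\cup H^3$, which expresses $\mathcal W(f_\pm)$ as a topological term (growing linearly in $g$) minus the hyperbolic harmonic-map area; and \emph{(b)} the quaternionic Pl\"ucker estimate of \cite{FLPP}, used already in the proof of Theorem \ref{willmoreestimate}, applied to the quaternionic holomorphic line bundle naturally associated with $f$. The \cite{HH2} construction is flexible enough to arrange a rich system of linearly independent quaternionic holomorphic sections, and the resulting Pl\"ucker bound on $\mathcal W(f)$ can be made to exceed $4\pi$ times any prescribed integer by increasing the complexity of the spectral data feeding the construction.

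The main obstacle is that $|\deg L|$ may itself scale with $g$, so the Willmore bound must outpace this growth rather than merely be positive. The resolution is that, unlike a generic Willmore surface of genus $g$ (for which the infimum of $\mathcal W$ stays below $8\pi$), the \cite{HH2} examples come with an abundance of prescribed geometric data — the curve $\gamma$, the branching of $\Phi$, and the equivariance group — that can be tuned to inflate $\mathcal W(f)$ at a controlled rate relative to $|\deg L|$. The conclusion that $\mathcal E(s) > 0$, and hence (by Theorem \ref{harmonicenergy}) that $s$ is not a twistor line, then follows for sufficiently large genus surfaces $\hat\Sigma$ within this family.
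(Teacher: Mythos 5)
Your overall framework is the same as the paper's: apply Theorem \ref{willmoreenergy} to the non-admissible $\tau$-negative sections of \cite{HH2} and reduce the claim to the inequality $\mathcal W(f) > 4\pi\,|\deg L|$. But the two quantitative inputs that make this inequality come out are missing, and the way you propose to obtain them is not what actually works. First, you plan to ``push $|\deg L|$ as low as the stability constraint allows, ideally to $1$'' by choosing branching data. You give no reason this is achievable within the \cite{HH2} family, and in fact it is not how the argument goes: for the actual construction ($\hat\Sigma\to\Sigma$ a $q$-fold cover of a torus branched over the four half-lattice points, with $g(\hat\Sigma)=2q-1$), a branch-order count following \cite[Theorem 3.3]{HHSch} gives total branch order $4(q-3)$ for the surface, and hence
\[
\deg L=\tfrac{1}{2}\bigl(2-2g(\hat\Sigma)+4(q-3)\bigr)=-4,
\]
a \emph{fixed} constant independent of $q$. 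The point is not that $|\deg L|$ is small, but that it does not grow with the genus. Your stated ``main obstacle'' (that $|\deg L|$ may scale with $g$) is thus resolved by an explicit computation, not by tuning; and your proposed ``resolution'' --- that the geometric data ``can be tuned to inflate $\mathcal W(f)$ at a controlled rate relative to $|\deg L|$'' --- is a restatement of what must be proved, not an argument.

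Second, your lower bound on $\mathcal W(f)$ is only gestured at. The Pl\"ucker estimate is used in the paper only for the genus-one statement (Theorem \ref{willmoreestimate}); it is not the mechanism here. The mechanism is the multiplicative structure of the construction: $\hat f$ is assembled from $q$ M\"obius-congruent copies of a fundamental piece $\underline f$ on the $4$-punctured torus, and $\underline f$ converges to the initial spectral-genus-one equivariant Willmore surface $f$ (which has strictly positive Willmore energy because its Hopf differential is nowhere vanishing) as $q\to\infty$. Hence $\mathcal W(\hat f)=q\,\mathcal W(\underline f)>q(\mathcal W(f)-\epsilon)$, which exceeds $16\pi=4\pi\,|\deg L|$ for $q$ large. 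Without both the computation $\deg L=-4$ and this linear-in-$q$ growth of the Willmore energy, your argument does not close.
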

\begin{proof}
The non-admissible $\tau$-real holomorphic sections have been constructed 
by a deformation of finite gap solutions of the {\em cosh-Gordon} equation of spectral genus 1 on a torus $\Sigma$. 
The initial section on the torus $\Sigma$ yields an equivariant Willmore surface $f$. By Theorem \ref{willmoreenergy}, the Willmore energy of a fundamental piece is the energy of the section, since the degree of the kernel bundle $L$ is necessarily $0$. Because the Hopf differential $q(dz)^2$ does not vanish, the Willmore integrand is positive, which implies that the Willmore energy of $f$ is positive.
  
The $\tau$-negative holomorphic sections $s$ on surfaces of high genus have been constructed as follows (see \cite[Theorem 4.5]{HH2} for details):
 There is a $q$-fold covering Riemann surface $\hat\Sigma\to \Sigma$ of the initial torus, branched over  the four half-lattice points with branch order $q-1.$ 
 On $\Sigma$, there is a holomorphic family of connections with regular singularities at the half-lattice points and local monodromies in the conjugacy class of
 \[
 \begin{pmatrix} e^{2\pi i/q}&0\\0& e^{-2\pi i/q}\end{pmatrix}.
 \]
 The pull-back of this family of flat connections to $\hat\Sigma$ can be desingularized, and yields a lift of a $\tau$-negative holomorphic section $s$ on $\hat\Sigma$. This gives rise to a branched equivariant Willmore surface $\hat f$ which is minimal in $H^3$ away from its intersection with the boundary at infinity \cite[Section 5]{HH2}.
 The counting of branch orders in
  \cite[Theorem 3.3]{HHSch} also holds in the case of (equivariant) minimal surfaces $\hat f$ constructed by the $\tau$-negative holomorphic sections $s$, as it only depends on the local analysis near the singular points, and branch orders are given by the vanishing order of the Higgs field.
 In particular, (for odd $q$), this yields
 that (with the notations of  \cite[Theorem 3.3]{HHSch}) 
 \[
 \frac{\tilde p}{\tilde q}=\frac{2/q+1}{4}=\frac{2+q}{4q},
 \]
where $\tilde p = 2+q$ and $\tilde q = 4q$ are coprime since $q$ is odd. Then
 \[
 g(\hat\Sigma)=2q-1.
 \] 
 Moreover, the total branch order of $f$ is
 \[
 4 (\tilde{q}/2-\tilde p-1)=4(q-3).
 \]
 Hence, as the differential of the surface is a holomorphic section of
 \[
 K_{\hat\Sigma} L^2,
 \]
 where $L$ is the kernel bundle of the Higgs field of $s$ on $\hat\Sigma$, we compute
 \[
 \mathrm{deg}(L)=\frac{1}{2}(2-2g(\hat \Sigma)+4 (q-3))=-4.
 \]
 By Theorem \ref{willmoreenergy} it remains to show that the Willmore energy of $\hat f$ is bigger than $16\pi.$ This can be seen as follows: 
 The family of regular singular connections on the torus $\Sigma$ yields a equivariant Willmore surface $\underline f$ on the
 4-punctured torus by the reconstruction method in \cite[Section 5]{HH2}. Putting $q$ many M\"obius-congruent pieces of $\underline f$ together in the conformal 3-space yields the (equivariant) Willmore surface $\hat f$.  By construction $\underline f$ is close to $f$
 away from two branch cuts between the singular points on the torus $\Sigma$. It follows from \cite[Section 5]{HH2} that for every $\epsilon>0$ there exists $\delta>0$ such that for all $q$ with $\tfrac{1}{q}<\delta$ we have $|W(\underline{f}) - W(f)|<\epsilon$. 
 Take $\epsilon$ small such that $\tfrac{1}{2}\mathcal W(f)>\epsilon.$ 
 As the Willmore energy of $f$ is positive (independent of $q$) we obtain
 \[
 \mathcal W(\hat f) = q\mathcal W(\underline f) >q(\mathcal W(f)-\epsilon)>16\pi
 \] 
for $q$ large enough. 
\end{proof}
\begin{remark}
Alternative proofs of the theorem can be given by making use of the 
special coordinates introduced in \cite{HH}. 
\end{remark}

\section*{Acknowledgments}

The first author is supported by the DFG Emmy-Noether grant on "Building blocks of physical theories from the geometry of quantization and BPS states", number AL 1407/2-1. 
The second author was supported by RTG 1670 "Mathematics inspired by string theory and quantum field theory" funded by the Deutsche Forschungsgemeinschaft (DFG) while much of this work was carried out. The second author would also like to thank Lynn Heller and Franz Pedit for discussions about the Willmore functional, and Jun-ichi Inoguchi for discussions about dual surfaces. 

\bibliographystyle{amsplain}
\bibliography{references}

\end{document}